\newtheorem{theorem}{Theorem}[section]
\newtheorem{corollary}[theorem]{Corollary}
\newtheorem{lemma}[theorem]{Lemma}
\theoremstyle{definition}
\newtheorem{definition}[theorem]{Definition}
\theoremstyle{definition}
\newtheorem*{definition*}{Notational Conventions}
\newtheorem*{theorem*}{Theorem}
\theoremstyle{remark}
\newtheorem{remark}[theorem]{Remark}
\theoremstyle{definition}
\newtheorem{example}[theorem]{Example}
\newtheorem*{example*}{Example}
\theoremstyle{definition}
\newtheorem{construction}[theorem]{Construction}
\newtheorem{notation}[theorem]{Notation}
\theoremstyle{definition}
\theoremstyle{definition}
\theoremstyle{definition}
\title{Classifying Infinity Topoi via weighted limits}
\author{Ivan Di Liberti}
\author{Nicholas J. Meadows}
\begin{document}

\begin{abstract}
We construct classifying $\infty$-topoi by showing that the $(\infty,2)$-category of topoi has weighted limits. We show that several prestacks of interest have a classifying topos, including the prestack of spectra.
\end{abstract}

\maketitle
\tableofcontents
\section*{Introduction}

\subsection*{Landscape}

\subsubsection*{The notion of classifying topos}
The theory of \textit{classifying topoi} represents a central aspect of topos theory. Its relevance emerged from the Grothendieck school, who initially identified the \textit{classifying topos} of $G$-torsors for $G$ a group scheme (see the introduction of \cite{moerdijk2006classifying}). The notion was later developed by the work of Monique Hakim in her PhD thesis, mostly motivated by ringed topoi \cite{hakim2019topos}. Finally, a complete theory of classifying topoi emerged in the community of categorical logic via the notion of \textit{geometric theory} \cite{Johnstone,talkj}.

The classifying topos $\mathsf{Set}[\mathbb{T}]$ of a geometric theory $\mathbb{T}$ provides access to the so-called \textit{generic model} of a theory, which controls the generic behavior of the whole theory. 

Besides many applications to logic (see part D of \cite{Johnstone}), some early applications to algebraic geometry were observed by Wraith \cite{wraith2006generic}, and later displayed by Blechschmidt \cite{blechschmidt2021using,blechschmidt2018elementary}, whose work has influenced and inspired many other researchers in the last years \cite{blechschmidt2020general,cherubini2024foundation,cherubini2024projective} in the general topic of synthetic algebraic geometry. Many other applications of this technology to other fields of mathematics have been provided by Caramello \cite{caramello2023unification,caramello2016grothendieck}.

\subsubsection*{Towards classifying $\infty$-topoi}
This paper deals with the general problem of building \textit{classifying} $\infty$-topoi. There are several motivations to approach this problem, some very concrete and some very conceptual. Many of them can be understood as a homotopical replay of the classical motivations, investing several topics of pure mathematics. Others protrude to the question of what \textit{homotopical geometric logic} is supposed to be, and thus pertain more to logic. The question concerning the existence of some specific classifying $\infty$-topoi has appeared a number of times in the community of homotopical algebra \cite{215771,345680} and it was very informally addressed in the last section of \cite{anel2022left}.

In the last years there have been some attempts to define a satisfactory notion of $\infty$-geometric theory. Prominently, Stenzel \cite{stenzel2022higher,stenzel2023notions} has provided the best results so far, introducing notions of \textit{regular}, \textit{coherent} and \textit{geometric} $\infty$-theories. Unfortunately, these notions are not supported by a proof calculus which is as robust as that of geometric logic, and require a much more sophisticated treatment than the usual $1$-dimensional counterpart. The main obstruction to such development at the moment seems to be conceptual. An $\infty$-topos has an object classifier and the complexity of its localizations cannot be reduced to its classifier of subobjects. This is mirrored by the complexity of the theory of sites, which requires the more flexible notion \cite{anel2022left,anel2024left} to be developed, while the more familiar notion of $\infty$-site by Lurie \cite{LurieHTT} only recovers hypercomplete topoi. Currently it is unclear (and to some extent ultimately open to debate) whether a compact notion of $\infty$-geometric theory can be defined without the complexity of Stenzel, and in the quest for defining classifying topoi, the question \textit{classifying topoi of what?} remains somewhat unanswered.

\subsection*{Contribution}

In this paper, we suspend our judgment concerning the notion of geometric $\infty$-theory and take a more concrete approach to build classifying topoi. In \cite[part B]{Johnstone}, Johnstone provides an alternative approach to the construction of classifying topoi which precedes the notion of geometric theory, and for every intended application is indistinguishable from its more logical counterpart. That is, the construction of classifying topoi via \textit{weighted limits}. From a technical point of view, there are two main results in the paper. The first one concerns the $(\infty,2)$-category of topoi.

\begin{theorem*}[\Cref{thm6.1}]
    The $(\infty,2)$-category $TOP_{\mathcal{B}}$ of $\mathcal{B}$-topoi and bounded geometric morphisms has weighted limits.
\end{theorem*}

The second main result is about the existence of classifying topoi. Indeed, in the last section we deploy this technology to construct classifying topoi. For us, a \textit{theory} will just be a construction, or more precisely, a prestack

\[\mathbb{T}: TOP_{\mathcal{B}}^{op} \to CAT_{\infty}\]

which should be identified with the association mapping a topos $\mathcal{E}$ to the category of models of the intended theory $\mathbb{T}$ in $\mathcal{E}$. Of course, since no logic has been defined, this is just an informal suggestion, and this notion of theory is indeed much more general than an appropriate notion of geometric theory would be. Being more general, this notion captures a very large number of interesting constructions. For a theory to have a classifying topos means that the prestack is represented by a topos,

\[\mathbb{T} \simeq  TOP_{\mathcal{B}}(-,\mathcal{B}[\mathbb{T}]).\]

\begin{example*}[Models of a Lawvere theory]
Let $\mathsf{L}$ be a Lawvere theory in the sense of \cite{berman2020higher}. Then we have a canonically associated theory mapping a topos $\mathcal{E}$ to its $\infty$-category of $\mathsf{L}$-models in $\mathcal{E}$, $\mathbb{T}^\mathsf{L}:  TOP_{\mathcal{S}}^{op} \to CAT_{\infty}.$ 
\end{example*}

\begin{example*}[\Cref{spectra}]
For $\mathcal{E}$ a topos we can construct the category of spectrum objects in $\mathcal{E}$ and consider the associated prestack, $\mathbb{S}: TOP_{\mathcal{S}}^{op} \to CAT_{\infty}.$
\end{example*}

We show that these two prestacks both have a classifying topos and we provide a general recipe to construct classifying topoi of \textit{geometric sketches}. The notion of sketch was recently imported to $\infty$-categories in \cite{casacuberta2025sketchable} and has proven to be extremely expressive, overarching essentially every imaginable construction. In our paper we slightly restrict the focus to geometric sketches (which can be taken as a practical notion of geometric theory) and prove the following.

\begin{theorem*}[\Cref{smallgeom} and \Cref{classtoposforgeoske}]
Let $\mathbb{T}$ be a theory, then the following are equivalent:
\begin{itemize}
    \item $\mathbb{T}$  admits a classifying topos;
    \item $\mathbb{T}$ is the prestack of models of a (small) geometric sketch.
\end{itemize}
\end{theorem*}

\subsection*{Structure of the paper}
As it is evident from the introduction, most of our effort is devoted to proving \Cref{thm6.1}. To do so we essentially reproduce the original strategy for the proof from Johnstone. In \Cref{background} we establish the notation on $\infty$-categories and $(\infty,2)$-categories). In \Cref{nice2limits} we show that an $(\infty,2)$-category has weighted limits if and only if it has conical limits and powers with the arrow (\Cref{cor3.7}). It was already observed by Lurie that the $(\infty,1)$-category of topoi has all limits, and hence we are deduced to prove that we can power along the arrow. In \Cref{internal}, we briefly review the rudiments of internal higher category theory, as developed in \cite{Martini}, \cite{Martini-Wolf-1} and \cite{Martini-Wolf-2}. In \Cref{lemma}, we prove that indeed we can power via the arrow, and finally in \Cref{secboundedtopoi} we establish the appropriate $\infty$-bicategory of topoi and prove \Cref{thm6.1}. In \Cref{theobjclass}, we construct the object classifier, which is the blueprint for the whole theory of classifying topoi. Finally, in \Cref{classifying}, we discuss the notion of classifying topos  of a geometric sketch and provide many examples.

\subsection*{Acknowledgments}
 The first named author received funding from Knut and Alice Wallenberg Foundation, grant no. 2020.0199. The second named author was  supported by the Starting Grant 101077154 “Definable Algebraic Topology” from
 the European Research Council awarded to Martino Lupini.

\section{Background} \label{background}
\subsection{Background on $\infty$-categories}

\begin{notation}[$\infty$-categories]
In working with $\infty$-categories, we will often work in the context of Jacob Lurie's book \cite{LurieHTT} -see there for more details. An $\infty$-category is by definition a quasi-category, i.e. a simplicial set satisfying the appropriate lifting property. We often will write objects (or 0-simplices) in an $\infty$-category by lower case letters, such as $x, y$. We call the 1-simplices of an $\infty$-category \emph{edges} or \emph{1-morphisms}. An edge is said to be an equivalence if and only if it represents an equivalence in the \emph{homotopy category} of an $\infty$-category (see \cite[Section 1.2.3]{LurieHTT} for the definition of the homotopy category). An $\infty$-category is called an $\infty$-groupoid, iff each of its edges are invertible. In this case, the $\infty$-groupoids are precisely the Kan complexes by a theorem of Joyal (\cite{Joyal}).
\end{notation}
\begin{notation}[Morphisms in an $\infty$-category]
Given an object $x$ in an $\infty$-category $C$, we will write $C_{/x}$ and $C^{/x}$ for the slice constructions of \cite[Proposition 1.2.9.2 and Definition 4.2.1.1]{LurieHTT}, respectively. We will write $C_{x/}$  and $C^{x/}$ for their dual coslice constructions. 
Given two objects $x, y$ in an $\infty$-category $C$, we will write $Hom_{C}(x, y)$ for the space of maps between $x$ and $y$. We will be working in a relatively model-independent manner, so it does not matter which of the (equivalent) models of mapping spaces from \cite[Section 1.2.2]{LurieHTT} we use. An \emph{equivalence of $\infty$-categories} is just an equivalence in Joyal's model structure for $\infty$-categories. That is, it induces an equivalence of homotopy categories, as well as  induces weak equivalences of mapping spaces. 
    \end{notation}

\begin{notation}[Simplicial sets]
We will write $X^{K}$ for the internal hom in simplicial sets. If $X$ is an $\infty$-category, then $X^{K}$ is also an $\infty$-category and we often write $Fun(K, X)$ to emphasize that this is the \emph{$\infty$-category of functors} from $K$ to $X$. We will write $Set_{\Delta}$ for the category of simplicial sets.
\end{notation}

\begin{notation}[Simplicial categories]
    By a simplicial category, we mean a simplicially enriched category. Given a simplicial category C, we will write N(C) for its homotopy coherent nerve
(see \cite[Definition 1.1.5]{LurieHTT}). It should be noted that in the case we regard an
ordinary category as an enriched category with discrete mapping spaces, this
recovers the ordinary nerve construction. Note that we can also regard $Set_{\Delta}$ as a simplicially enriched category by the construction above. 
\end{notation}

\begin{notation}
We will write $\mathcal{S}$, $Cat_{\infty}$ for the $\infty$-categories of spaces and quasi-categories. They are obtained by applying $N$ to the full (i.e. in each simplicial degree) subcategories $Set_{\Delta}$ spanned by the $\infty$-groupoids and $\infty$-categories respectively. Given an $\infty$-category $C$, we will write $Psh(C)$ and $Psh_{Cat_{\infty}}(C)$ for $Fun(C^{op}, \mathcal{S}), Fun(C^{op}, Cat_{\infty})$, respectively. 
\end{notation}

\begin{definition}\label{def1.1}
A \emph{marked simplicial set} is an ordered pair $(X, E)$ where $X$ is a simpicial set and $E \subseteq X_{1}$. A map $f (X, E_{1}) \rightarrow (Y, E_{2})$ of marked simplicial sets is a map $f : X \rightarrow Y$ such that $f(E_{1}) \subseteq E_{2}$. We will write $Set^{+}_{\Delta}$ for the category of marked simplicial sets. This category is cartesian closed. 
\end{definition}

\begin{theorem}\label{thm1.2} (see \cite{LurieHTT})
There exists a model category structure on the category $Set_{\Delta}^{+}$ of marked simplicial sets in which the cofibrations are the monomorphisms and the fibrant objects are the marked simplicial sets.
\end{theorem}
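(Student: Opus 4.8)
The plan is to exhibit this structure as an instance of Jeff Smith's recognition theorem for combinatorial model categories (see \cite[Proposition A.2.6.13]{LurieHTT}). Since $Set_{\Delta}^{+}$ is a category of presheaves it is locally presentable, and the monomorphisms are generated as a weakly saturated class by a single set of maps: the boundary inclusions $(\partial \Delta^{n})^{\flat} \subseteq (\Delta^{n})^{\flat}$ together with the map $(\Delta^{1})^{\flat} \to (\Delta^{1})^{\sharp}$ marking the nondegenerate edge. Declaring the cofibrations to be all monomorphisms therefore produces a cofibrantly generated class for which every object is cofibrant, and the substance of the argument lies in identifying the weak equivalences and the fibrant objects.

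First I would introduce the class of \emph{marked anodyne} morphisms as the weakly saturated class generated by the inner horn inclusions $(\Lambda^{n}_{i})^{\flat} \subseteq (\Delta^{n})^{\flat}$ for $0 < i < n$, the outer horn inclusions whose extremal edge has been marked, and the comparison maps $K^{\flat} \to K^{\sharp}$ for $K$ a Kan complex. I would then establish the key recognition statement: a marked simplicial set $(X, E)$ has the right lifting property against every marked anodyne map if and only if $X$ is an $\infty$-category and $E$ is precisely its class of equivalences. These are the fibrant objects recorded in the statement.

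Having fixed the fibrant objects, I would call a map $(X, E) \to (Y, F)$ a weak equivalence when, for every fibrant $(Z, G)$, the induced map of mapping spaces $\mathrm{Map}\big((Y, F), (Z, G)\big) \to \mathrm{Map}\big((X, E), (Z, G)\big)$ is a weak homotopy equivalence, the mapping spaces being formed via the cartesian closed structure of \Cref{def1.1}; since every object is cofibrant these already compute the correct homotopy function complexes. It then remains to verify the hypotheses of Smith's theorem: that the weak equivalences form an accessible subcategory closed under two-out-of-three and retracts, that every map with the right lifting property against the generating cofibrations is a weak equivalence, and that the cofibrations which are weak equivalences are stable under pushout and transfinite composition. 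The first and third conditions are largely formal consequences of local presentability, while the second ties directly back to the recognition of the fibrant objects.

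The crux, and the step I expect to be the main obstacle, is the compatibility of the marked anodyne class with the cartesian closed structure. One must prove a pushout-product lemma: the pushout-product of an arbitrary cofibration with a marked anodyne map is again marked anodyne. This enrichment statement is exactly what guarantees that an exponential $(Z, G)^{(X, E)}$ with fibrant target remains fibrant, and hence it is what forces the fibrant objects to be no more and no less than the $\infty$-categories with their equivalences marked. Once it is available, the remaining model category axioms follow formally from Smith's theorem together with the standard closure properties of weakly saturated classes.
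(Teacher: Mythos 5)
The paper gives no proof of this statement at all: it is quoted directly from \cite{LurieHTT} (the Cartesian model structure on $Set^{+}_{\Delta}$ over a point, HTT \S 3.1), so there is nothing internal to compare against. Your proposal is, in substance, a faithful outline of Lurie's own construction: marked anodyne maps, the recognition of the fibrant objects, the pushout--product (enrichment) lemma, and the generation of the model structure from the resulting class of weak equivalences. Two remarks. First, you have implicitly corrected the statement as printed, which says the fibrant objects ``are the marked simplicial sets'' --- vacuous as written; the intended claim, which you supply, is that they are the pairs $(X,E)$ with $X$ an $\infty$-category and $E$ exactly its equivalences. Second, your generating set for the marked anodyne maps is missing one of Lurie's four families, namely the map
$$(\Lambda^{2}_{1})^{\sharp} \coprod_{(\Lambda^{2}_{1})^{\flat}} (\Delta^{2})^{\flat} \longrightarrow (\Delta^{2})^{\sharp},$$
which forces the marked edges of a fibrant object to satisfy two-out-of-three. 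Without it, right lifting against your generators yields marked edges that are one-sidedly invertible but does not directly pin them down as precisely the equivalences, so the ``key recognition statement'' you rely on would not go through as stated; with it, the rest of your argument (Smith's theorem, or equivalently Lurie's Proposition A.3.2.4 with weak equivalences detected by mapping spaces into fibrant targets, every object being cofibrant) is the standard and correct route.
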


\subsection{Background on $\infty$-bicategories}

The purpose of this section is to provide some brief background on the theory of $\infty$-bicategories from \cite{Gagna-Harpaz-Lanari} and \cite{LurieGoodwillie}.

\begin{definition}\label{def2.1}
A \emph{scaled simplicial set} is an ordered pair $(X, T_{X})$ consisting of a simplicial set $X$ and a subset $T_{X}$ of the 2-simplices of $X$ containing the degenerate ones. We call $T_{X}$ the \emph{thin triangles} in $X$.  A map of scaled simplicial sets $f: (X, T_{X}) \rightarrow (Y, T_{Y})$ is a map of simplicial sets $f : X \rightarrow Y$ such that $f(T_{X}) \subseteq T_{Y}$.
We denote by $Set_{\Delta}^{sc}$ the category of scaled simplicial sets. 
\end{definition}

\begin{construction}\label{con2.2}
Let $X$ be a simplicial set. We denote by $X_{\flat} = (X, deg_{2}X)$ the scaled simplicial set whose thin simplices are exactly the degenerate simplices. We denote by $X_{\sharp} := (X, X_2)$ the scaled simplicial set which has as its thin simplices all the 2-simplices of $X$.  
The assignments 
$$
X \mapsto X_{\flat}, \, \, \, \, X \mapsto X_{\sharp}
$$
are left and right adjoint to the forgetful functor
$$
Set_{\Delta}^{sc} \rightarrow Set_{\Delta}.
$$
\end{construction}

\begin{definition}\label{def2.3}
Given a scaled simplicial set $X$, we define its core to be the simplicial set $X^{th}$ spanned by those n-simplices of $X$ whose 2-dimensional faces are thin triangles (note that here $th$ stands for thin). The assignment 
$X \mapsto X^{th}$ is right adjoint to the forgetful functor $(-)_{\sharp}$.
\end{definition}

\begin{definition}\label{def2.4}
We say that a scaled simplicial set $C$ is an $\infty$-bicategory if the map $C \rightarrow *$ has the right lifting property with respect to the class of maps $\mathbf{S}$ from \cite[Definition 1.2.6]{Gagna-Harpaz-Lanari}.
\end{definition}

\begin{theorem}\label{thm2.5}
There is a model structure on $Set_{\Delta}^{sc}$ in which the cofibrations are the monomorphisms and the fibrant objects are the $\infty$-bicategories. 
\end{theorem}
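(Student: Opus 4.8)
The plan is to produce the model structure by the standard recognition theorem for combinatorial model categories (Jeff Smith's theorem), defining the weak equivalences through the prospective fibrant objects, and then checking that those fibrant objects are exactly the $\infty$-bicategories. First I would record that $Set_{\Delta}^{sc}$ is locally presentable: a scaled simplicial set is a simplicial set $X$ equipped with a subobject $T_X \subseteq X_2$ containing the degeneracies, so $Set_{\Delta}^{sc}$ is (equivalent to) the category of presheaves on the category obtained from $\Delta$ by freely adjoining a ``thin $2$-simplex'', and in particular it is a Grothendieck topos, hence locally presentable. The monomorphisms form the weakly saturated class generated by a set $I$, consisting of the boundary inclusions $\partial\Delta^n_{\flat} \hookrightarrow \Delta^n_{\flat}$ together with the single scaling map $\Delta^2_{\flat} \hookrightarrow \Delta^2_{\sharp}$; these will be the generating cofibrations. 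Using the cartesian closed structure of $Set_{\Delta}^{sc}$ and the core functor $(-)^{th}$ of \Cref{def2.3}, I assign to any $X$ and any $\infty$-bicategory $Z$ a mapping space $\mathrm{Map}(X,Z)$, and declare $f : X \to Y$ to be a weak equivalence iff $\mathrm{Map}(Y,Z) \to \mathrm{Map}(X,Z)$ is a weak homotopy equivalence for every $\infty$-bicategory $Z$. (Alternatively one could run Cisinski's machinery for presheaf categories, since cofibrations are the monomorphisms; the two routes differ only in bookkeeping.)

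Next I would verify the hypotheses of Smith's theorem for this class $W$. The two-out-of-three property is immediate from the definition; accessibility of $W$ as a full subcategory of the arrow category follows because applying the small object argument to $\mathbf{S}$ yields an accessible fibrant replacement, so that $\mathrm{Map}(-,Z)$ and the defining condition are accessible; maps in $\mathrm{inj}(I)$ (the trivial fibrations) lie in $W$ since such maps admit sections together with fiberwise homotopies and hence induce homotopy equivalences of mapping spaces; and finally $W \cap \mathrm{cof}(I)$, the candidate trivial cofibrations, is weakly saturated. Smith's theorem then produces a combinatorial model structure with cofibrations $= \mathrm{cof}(I) =$ monomorphisms and weak equivalences $W$.

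It remains to identify the fibrant objects, namely the objects with the right lifting property against $W \cap \mathrm{cof}(I)$, with the $\infty$-bicategories, i.e. the objects with the right lifting property against $\mathbf{S}$. One inclusion is soft: each generator in $\mathbf{S}$ is a monomorphism and is readily checked to be a weak equivalence, so every fibrant object has the right lifting property against $\mathbf{S}$. The hard part, and the technical heart of the argument, is the converse: one must show that every trivial cofibration lies in the weakly saturated class generated by $\mathbf{S}$ (together with its pushout-products against monomorphisms), so that the right lifting property against $\mathbf{S}$ already forces the right lifting property against all of $W \cap \mathrm{cof}(I)$. This is a saturation statement, directly analogous to Lurie's identification of the marked anodyne maps on $Set_{\Delta}^{+}$, and I would prove it by a filtration/Reedy-type induction decomposing an arbitrary trivial cofibration into iterated pushouts of the scaled anodyne generators. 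Getting this combinatorial decomposition to run cleanly, in particular controlling the interaction of the scaling map $\Delta^2_{\flat} \hookrightarrow \Delta^2_{\sharp}$ with the inner-horn fillings, is where essentially all of the difficulty lies.
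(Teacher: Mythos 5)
The paper does not actually prove this statement: it is quoted as background, the model structure being due to Lurie's Goodwillie paper and the identification of the fibrant objects with the $\infty$-bicategories of \Cref{def2.4} being the main theorem of \cite{Gagna-Harpaz-Lanari}. So there is no internal proof to compare yours against, and your proposal has to stand on its own.

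Your Smith/Cisinski setup in the first two paragraphs is the standard route and is essentially fine (modulo the small point that $Set^{sc}_{\Delta}$ is not literally a presheaf category, because $T_X\subseteq X_2$ is required to be a subobject containing the degeneracies; it is nonetheless locally presentable and cartesian closed, which is all you use). The gap is in the final step, and it is not merely ``where the difficulty lies'': the step as you state it is false. Every generator in $\mathbf{S}$ is bijective on vertices (the inner-horn generators are; the $\Delta^4$ rescaling is the identity on underlying simplicial sets; the remaining generators collapse the same edge on both sides), and weak saturation and pushout-products with monomorphisms preserve vertex-bijectivity; yet $\Delta^0\to (E_1)_{\sharp}$ is a trivial cofibration that is not bijective on vertices. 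Hence the trivial cofibrations do not lie in the weakly saturated class generated by $\mathbf{S}$, exactly as trivial cofibrations in the Joyal model structure are not inner anodyne. What one actually needs is the weaker statement that the right lifting property against $\mathbf{S}$ already implies the right lifting property against all trivial cofibrations, and proving this requires developing the theory of invertible $1$- and $2$-morphisms in a ``weak'' $\infty$-bicategory and showing that they can be strictified/inverted. That is precisely the theorem that every weak $\infty$-bicategory is an $\infty$-bicategory, which remained open for years after Lurie's construction (Lurie sidesteps it by taking an a priori smaller class of fibrant objects) and is the central result of \cite{Gagna-Harpaz-Lanari}. A ``filtration/Reedy-type induction'' does not engage with this obstruction; as written, your argument produces a combinatorial model structure with the right cofibrations but does not establish the advertised description of its fibrant objects.
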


\begin{remark}\label{rmk2.6} (see \cite[Remark 1.2.9]{Gagna-Harpaz-Lanari})
Given an $\infty$-bicategory $C$, $C^{th}$ is an $\infty$-category. 
\end{remark}

\begin{remark}\label{rmk2.7}
As noted in \cite{LurieGoodwillie}, the cartesian product is a left Quillen bifunctor. In particular, for any two scaled simplicial sets $X, Y$, we have a mapping object $Fun(X, Y)$, which satisfies 

$$
Hom_{Set_{\Delta}^{sc}}(Z, Fun(X, Y)) \cong Hom_{Set_{\Delta}^{sc}}(Z \times X, Y).
$$
Furthermore, we have that if $C$ is an $\infty$-bicategory, so is $Fun(K, C)$ for any scaled simplicial set $K$.

\end{remark}

\begin{construction}\label{con2.8} 
We note that there is a Quillen adjunction 
$$
\mathfrak{C}^{sc} :  Set^{sc}_{\Delta} \leftrightarrows Set^{+}_{\Delta} : N^{sc}
$$
where the right adjoint is called the scaled coherent nerve. Applying the scaled coherent nerve to the full subcategory $Set^{+}_{\Delta}$ spanned by $\infty$-categories, we obtain the $\infty$-bicategory of $\infty$-categories, which we denote by $CAT_{\infty}$. Note that $CAT_{\infty}^{th} = Cat_{\infty}$.
\end{construction}

\begin{definition}\label{def2.9}
Suppose that $C$ is an $\infty$-bicategory. Then for each $x, y \in C_{0}$, we will write $Hom_{C}(x, y)$ for the simplicial set whose $n$-simplices are maps 
$s : \Delta^{n} \times \Delta^{1} \rightarrow  C$ such that $s|_{\Delta^{n} \times \{ 0\} } = x, s|_{\Delta^{n} \times \{ 1\} } = y$ and $s|_{\Delta^{\{(i, 0), (i, 1), (j, 1) \}}}$ is thin.

\end{definition}

\begin{definition}\label{def2.10}
We will write $\mathbf{i} : Cat_{\infty} \rightarrow \mathcal{S}$ for the right adjoint of the inclusion $\mathcal{S} \subseteq Cat_{\infty}$. We call $\mathbf{i}$ the \emph{core} functor. 
\end{definition}

\begin{lemma}\label{lem2.11}
Suppose that $\mathcal{C}$ is an $\infty$-bicategory. Then we have the following:
\begin{enumerate}
\item{$Hom_{C}(x, y)$ is an $\infty$-category for each $x, y \in C$.}
\item{We have an identification $\mathbf{i} Hom_{C}(x, y) = Hom_{C^{th}}(x, y)$. }
\end{enumerate}
\end{lemma}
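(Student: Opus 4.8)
The plan is to read off both claims from \Cref{def2.9} by treating $\mathrm{Hom}_C(x,y)$ as built from scaled maps out of the prism $\Delta^n\times\Delta^1$, and to use throughout that $C$, being an $\infty$-bicategory, is fibrant for the model structure of \Cref{thm2.5}.

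For (1), I would verify the inner horn lifting property directly. Fix $0<k<n$. Unwinding \Cref{def2.9}, a horn $\Lambda^n_k\to\mathrm{Hom}_C(x,y)$ together with the constant-ends requirement amounts to a scaled map $B\to C$, where $B\subseteq E_n:=\Delta^n\times\Delta^1$ is the scaled subcomplex $B=(\Lambda^n_k\times\Delta^1)\cup(\Delta^n\times\partial\Delta^1)$ carrying the upper-triangle scaling, with $\Delta^n\times\{0\}$ and $\Delta^n\times\{1\}$ sent to $x$ and $y$; filling the horn means extending this map along $B\hookrightarrow E_n$. Since $C$ is fibrant (\Cref{def2.4}, \Cref{thm2.5}), such an extension exists provided $B\hookrightarrow E_n$ is scaled anodyne. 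I would prove this by the standard filtration of the prism by its nondegenerate simplices in a staircase order, exhibiting each attachment as a pushout of a generating scaled anodyne map from the class $\mathbf{S}$ of \cite{Gagna-Harpaz-Lanari}; this is the scaled refinement of the classical fact that $(\Lambda^n_k\times\Delta^1)\cup(\Delta^n\times\partial\Delta^1)\hookrightarrow\Delta^n\times\Delta^1$ is inner anodyne, and tracking the thin triangles along the filtration is the one genuinely combinatorial point here.

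For (2), recall that for an $\infty$-category $D$ the core $\mathbf{i}D$ is its maximal sub-$\infty$-groupoid: an $n$-simplex of $D$ lies in $\mathbf{i}D$ exactly when all of its edges are equivalences in $D$. On the other side, the mapping space $\mathrm{Hom}_{C^{th}}(x,y)$ of the $\infty$-category $C^{th}$ is modeled two-sidedly by the simplicial set whose $n$-simplices are maps $\Delta^n\times\Delta^1\to C^{th}$ that are constant on the two ends. By \Cref{def2.3}, a map $\Delta^n\times\Delta^1\to C$ factors through $C^{th}$ if and only if all of its $2$-faces are thin — both the upper triangles $\{(i,0),(i,1),(j,1)\}$ and the lower triangles $\{(i,0),(j,0),(j,1)\}$. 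Hence $\mathrm{Hom}_{C^{th}}(x,y)\hookrightarrow\mathrm{Hom}_C(x,y)$ is the levelwise inclusion selecting those $s$ whose lower triangles are also thin, and the task is to match this with the core condition defining $\mathbf{i}\,\mathrm{Hom}_C(x,y)$.

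The crux is the edgewise claim: an edge of $\mathrm{Hom}_C(x,y)$ — a square $\Delta^1\times\Delta^1\to C$ with constant ends and thin upper triangle, encoding a $2$-cell between two $1$-morphisms $x\to y$ — is an equivalence in the $\infty$-category $\mathrm{Hom}_C(x,y)$ if and only if its lower triangle is also thin. This is exactly the characterization of invertible $2$-morphisms of an $\infty$-bicategory through thinness, and it is the main obstacle; I would extract it from \cite{Gagna-Harpaz-Lanari} and \cite{LurieGoodwillie}. Granting it, an $n$-simplex $s\in\mathrm{Hom}_C(x,y)_n$ lies in the core if and only if every edge of $s$ is an equivalence, if and only if every lower triangle of the prism is thin; together with the upper triangles, which are thin by construction, this says precisely that $s$ factors through $C^{th}$, i.e. $s\in\mathrm{Hom}_{C^{th}}(x,y)_n$. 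This yields the levelwise identification $\mathbf{i}\,\mathrm{Hom}_C(x,y)=\mathrm{Hom}_{C^{th}}(x,y)$ and proves (2); as a byproduct $\mathrm{Hom}_{C^{th}}(x,y)$ is a Kan complex, as it must be, being the core of the quasi-category of part (1).
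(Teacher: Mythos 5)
Your proposal is correct in outline and departs from the paper at both halves. For part (1) the paper simply cites \cite[Notation 1.2.13]{Gagna-Harpaz-Lanari}, whereas you sketch the underlying combinatorics: realizing the inner-horn problem as an extension along $(\Lambda^n_k\times\Delta^1)\cup(\Delta^n\times\partial\Delta^1)\hookrightarrow\Delta^n\times\Delta^1$ and showing this map is scaled anodyne. That is the right mechanism, but the one genuinely nontrivial point (tracking the thin triangles through the staircase filtration) is exactly what you defer, so this half is a plan rather than a proof; citing the source, as the paper does, is the economical choice. For part (2) your skeleton matches the paper's --- reduce to edges, i.e.\ to squares $\Delta^1\times\Delta^1\to C$ with constant ends and thin upper triangle, and decide when the remaining triangle is thin --- but your pivot is different and, notably, the sounder one. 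The paper argues that an edge of the core sends every edge of the square to an equivalence and then invokes ``triangles spanned by equivalences are thin''; but the two vertical edges of the square are the $1$-morphisms $f,g\colon x\to y$ being compared, which need not be equivalences in $C$, and in any case a triangle all of whose edges are equivalences need not be thin (consider the delooping of a monoidal category). Your pivot --- an edge of $Hom_{C}(x,y)$ is an equivalence if and only if its lower triangle is thin, i.e.\ thinness detects invertibility of the encoded $2$-cell --- is the characterization that actually makes the argument go through, and it is available in \cite{Gagna-Harpaz-Lanari}. One caveat on your side, shared with the paper: for $n\ge 2$, factoring through $C^{th}$ requires \emph{all} $2$-faces of the prism to be thin, including the mixed triangles $\{(i,0),(j,0),(k,1)\}$ and $\{(i,0),(j,1),(k,1)\}$ with $i<j$, which are not faces of any edge-square; closing that gap needs one more application of the same invertibility criterion to the whiskered $2$-cells these faces encode.
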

\begin{proof}
The first statement is from \cite[Notation 1.2.13]{Gagna-Harpaz-Lanari}.

For the second statement, we first want to show that there is a natural inclusion $\mathbf{i} Hom_{C}(x, y) \subseteq Hom_{C^{th}}(x, y)$, where $Hom_{\mathcal{C}^{th}}$ is the model of mapping space in an $\infty$-category described in \cite[pg. 28]{LurieHTT}.
To do this, it suffices to show that each map $s : \Delta^{n} \times \Delta^{1} \rightarrow C$ representing an $n$-simplex of $\mathbf{i} Hom_{C}(x, y) $ factors through $C^{th}$. In other words, we want to show that each 2-simplex of $\Delta^{n} \times \Delta^{1}$ maps under $s$ to a thin 2-simplex. Since each degenerate 2-simplex is automatically thin, it suffices to show that any map $e := \Delta^{1} \times \Delta^{1} \subseteq \Delta^{n} \times \Delta^{1} \xrightarrow{F} C$ corresponding to an edge  $\mathbf{i} Hom_{C}(x, y)$, maps triangles to thin triangles. By definition, $e|_{\Delta^{1} \times \{0 \}}, e|_{\Delta^{1} \times \{ 1\}}$ are both equivalences. Thus, by the construction of mapping spaces from \cite[Section 1.2.2]{LurieHTT} , $e$ maps each edge of $\Delta^{1} \times \Delta^{1}$ to equivalences. Since triangles spanned by equivalences are thin in an $\infty$-bicategory, we have that $e$ factors through $C^{th}$ as required.

We now want to show the reverse inclusion $ Hom_{C^{th}}(x, y) \subseteq \mathbf{i} Hom_{C}(x, y)$. Arguing as before, it suffices to show that each edge of $Hom_{C^{th}}(x, y)$ is an edge of $ \mathbf{i} Hom_{C}(x, y)$. 
Since $Hom_{C^{th}}(x, y)$ is a Kan complex, every edge of  $Hom_{C^{th}}(x, y)$ is an equivalence, and thus corresponds to a map $\Delta^{1} \times \Delta^{1} \rightarrow C$ which takes edges to equivalences. Thus, since triangles in an $\infty$-bicategory whose edges are equivalences are thin, $e$ represents an edge of $\mathbf{i}Hom_{C}(x, y)$. 
\end{proof}

\begin{construction}\label{con2.12} (see \cite[Definition 4.2.1]{Gagna-Harpaz-Lanari})
Suppose that $C$ is an $\infty$-bicategory and $x \in \mathcal{C}$. Then the slice  $C^{/x}$ is an $\infty$-bicategory such that the n-simplices of $(C^{/x})^{th}$ can be identified with maps $\phi: \Delta^{1} \times \Delta^{n} \rightarrow C^{th}$ with $\phi|_{\{ 1\} \times \Delta^{n}} = x$ and the thin simplices can be identified with maps 
$$
\Delta^{1}_{\flat} \otimes \Delta^{2}_{\sharp} \rightarrow C
$$
where $\otimes$ is the Gray tensor product of \cite[Definition 4.1.1]{Gagna-Harpaz-Lanari}.
Thus, by the definition of the Gray tensor product, there is a natural inclusion $C^{/x} \rightarrow Fun(\Delta^{1}_{\flat}, C)$.

Similarly, one can define the coslice $C^{x/}$, so that there are natural inclusions:
$$
C^{/x} \rightarrow Fun(\Delta^{1}_{\flat}, C) \leftarrow C^{x/}
$$

\end{construction}

\section{$(\infty, 2)$-Categorical Limits from Conical Limits and Arrows}
\label{nice2limits}
The purpose of this section is to briefly review the theory of weighted limits in an $\infty$-bicategory from \cite{Gagna-Harpaz-Lanari}. Then we will prove a result which says that if an $\infty$-bicategory $C$ has cotensors with $\Delta^{1}$ and the underlying $\infty$-category has all limits, then the $C$ has all weighted limits. This result and its proof were inspired by a classical 2-categorical theorem (see \cite[Section 2.1]{Kelly-Limits}).

\begin{definition}\label{def3.1}
Let $C$ be an $\infty$-bicategory and $f : J \rightarrow C$ and $w : J \rightarrow CAT_{\infty}$ be two functors of $\infty$-bicategories. Then the limit $l \in C$ of $f$  weighted by $w$ is characterized by the natural equivalence of $\infty$-categories: 
\begin{equation}\label{eqxxxx}
Hom_{C}(x, l) \cong Nat_{J}(w, Hom_{C}(x, f(-))),
\end{equation}
where the right-hand side denotes the mapping category in the $\infty$-bicategory $Fun(J, CAT_{\infty})$ between $w$ and $Hom_{C}(x, f(-)))$.

We say that a limit is \emph{conical} if the weight functor is the constant functor. We say that an $\infty$-category is finite if it can be written as the homotopy colimit of finitely many simplices. We say that a limit is finite if the underlying $\infty$-category of $J$ is finite and $w(j)$ is a finite $\infty$-category for each $j \in J$.

\end{definition}

\begin{remark}\label{rmk3.2}
Note that the underlying $\infty$-category of $Fun(J, CAT_{\infty})$ is $Fun(J^{th}, Cat_{\infty})$.
Thus, by \cref{lem2.11} above applying $\mathbf{i}$ to both sides of the \cref{eqxxxx} yields an equivalence of $\infty$-categories,
$$
Hom_{C^{th}}(x, l) \cong Nat_{J^{th}}(w^{th}, Hom_{C^{th}}(x, f^{th}(-)))
$$
which we say is the underlying $\infty$-categorical universal property of the limit.  

By the characterization of weighted limits in $\infty$-categories from \cite[Proposition 5.1]{Gepner-Haugseng-Nikolaus} and \cite[Definition 2.8]{Gepner-Haugseng-Nikolaus}, this equivalence exhibits $l$ as the limit of $f^{th}$ weighted by $w^{th}$. 
\end{remark}

\begin{lemma}\label{lem3.3}
Suppose that $X$ is a complete $\infty$-category. Then $X$ is cotensored over $Cat_{\infty}$ if it admits a cotensoring with  $\Delta^{1}$.
\end{lemma}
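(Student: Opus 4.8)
The plan is to manufacture the cotensor $x^{K}$ for an arbitrary $K \in Cat_{\infty}$ out of the given cotensors by $\Delta^{1}$ (together with the trivial one by $\Delta^{0}$), using that the cotensor is, by its defining universal property $\Hom_{X}(y, x^{K}) \simeq \mathrm{Fun}(K, \Hom_{X}(y,x))$, contravariantly continuous in the exponent $K$. Concretely, the guiding principle is that $\mathrm{Fun}(-, D) \colon Cat_{\infty}^{op} \to Cat_{\infty}$ sends colimits to limits, so if $K \simeq \mathrm{colim}_{i} K_{i}$ in $Cat_{\infty}$ then any candidate object $A := \lim_{i} x^{K_{i}}$ (formed in $X$) satisfies $\Hom_{X}(y, A) \simeq \lim_{i}\Hom_{X}(y, x^{K_{i}}) \simeq \lim_{i}\mathrm{Fun}(K_{i}, \Hom_{X}(y,x)) \simeq \mathrm{Fun}(K, \Hom_{X}(y,x))$, exhibiting $A$ as $x^{K}$. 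Thus the problem reduces to two ingredients: presenting every $K$ as a colimit of copies of $\Delta^{0}$ and $\Delta^{1}$, and forming the dual limits in $X$. This is the formal dual of the classical fact exploited by Kelly that $\mathbf{Cat}$ is generated under colimits by the arrow; the small limits involved are the ones I form in $X$.

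For the presentation I would invoke two standard facts. First, the spine inclusion $\mathrm{Sp}^{n} = \Delta^{\{0,1\}} \cup_{\Delta^{\{1\}}} \cdots \cup_{\Delta^{\{n-1\}}} \Delta^{\{n-1,n\}} \hookrightarrow \Delta^{n}$ is inner anodyne, hence an equivalence in $Cat_{\infty}$, so $\Delta^{n}$ is an iterated pushout of $n$ copies of $\Delta^{1}$ glued along $\Delta^{0}$. Running the continuity principle on this pushout presents $x^{\Delta^{n}}$ as the iterated fibre product $x^{\Delta^{1}} \times_{x} \cdots \times_{x} x^{\Delta^{1}}$ taken along the evaluation maps $x^{\Delta^{1}} \rightrightarrows x^{\Delta^{0}} = x$ induced by the two vertices of $\Delta^{1}$; these are finite limits. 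Second, every $\infty$-category is the colimit of its simplices, $K \simeq \mathrm{colim}_{(\Delta^{n} \to K) \in \Delta_{/K}} \Delta^{n}$ in $Cat_{\infty}$ (the localization $Set_{\Delta} \to Cat_{\infty}$ is a left adjoint, so it carries the tautological colimit of simplicial sets to a colimit in $Cat_{\infty}$). Combining the two, I would define $x^{K} := \lim_{(\Delta_{/K})^{op}} x^{\Delta^{n}}$ and verify the cotensor universal property by transporting these same two presentations through $\mathrm{Fun}(-, \Hom_{X}(y,x))$ as in the first paragraph.

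The hard part will be coherence rather than the bare existence of the individual objects. I must upgrade this object-level recipe to an honest bifunctor $Cat_{\infty}^{op} \times X \to X$; the clean route is to produce $x^{(-)}$ as the right Kan extension of the partial datum $\{\Delta^{0} \mapsto x,\ \Delta^{1} \mapsto x^{\Delta^{1}}\}$ along the opposite of the inclusion of the full subcategory spanned by $\Delta^{0}, \Delta^{1}$ into $Cat_{\infty}$, and then to identify this Kan extension with the explicit limits above through the pointwise formula. The delicate points are that the indexing diagrams $\Delta_{/K}$ and the spine diagrams assemble naturally in $K$, that the Kan extension is genuinely pointwise so the explicit limit formula applies, and that the equivalence $\Hom_{X}(y, x^{K}) \simeq \mathrm{Fun}(K, \Hom_{X}(y,x))$ is natural in both $y$ and $K$. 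This $\infty$-categorical bookkeeping — assembling the presentations coherently rather than one $K$ at a time — is exactly where more care is needed than in Kelly's $1$-categorical argument, and throughout I use that $X$ admits the small limits being formed.
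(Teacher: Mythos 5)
Your strategy is essentially the paper's: extend the partial datum $\Delta^{1} \mapsto x^{\Delta^{1}}$ by a Kan extension along the inclusion of the full subcategory on $\Delta^{1}$ into $Cat_{\infty}$, and verify the universal property by two density steps --- every $\infty$-category is the colimit of its simplices over $\Delta_{/K}$ (the paper cites Haugseng for this), and every $\Delta^{n}$ is generated by $\Delta^{1}$ (you use the spine decomposition; the paper checks density of $\{\Delta^{1}\}$ in $\Delta$ by reducing to the corresponding ordinary categories). The one substantive divergence is the variance, and it matters for the hypotheses. You form $x^{K}$ as a \emph{right} Kan extension, i.e.\ as the limit $\lim_{(\Delta_{/K})^{op}} x^{\Delta^{n}}$, and you say explicitly that throughout you use that $X$ admits the small limits being formed --- but the lemma assumes only that $X$ is \emph{cocomplete}, so those limits are not guaranteed to exist. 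The paper's proof runs the construction in the other direction: it takes the \emph{left} Kan extension of $\Delta^{1} \otimes (-)$ (which exists by cocompleteness) and verifies $\Hom_{X}(c \otimes \Delta^{n}, d) \simeq \Hom_{Cat_{\infty}}(\Delta^{n}, \Hom_{X}(c,d))$ using that $\Hom_{X}(-,d)$ turns the colimits supplied by cocompleteness into limits; in other words, despite the word ``cotensored'' in the statement, what is actually constructed there is the tensor (copower), which is the variance compatible with cocompleteness. So to match the stated hypothesis you should either dualize your argument to the tensor side (left Kan extension, colimits in $X$, universal property $\Hom_{X}(c\otimes K,d)\simeq Fun(K,\Hom_{X}(c,d))$) or explicitly assume completeness of $X$; as written, the objects $\lim_{(\Delta_{/K})^{op}} x^{\Delta^{n}}$ need not exist under the given hypotheses. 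Your coherence discussion (producing the bifunctor via a pointwise Kan extension rather than one $K$ at a time) is the right concern and is handled the same way in the paper.
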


\begin{proof}
Consider the full subcategory $\{ \Delta^{1} \} \subseteq Cat_{\infty}$ on the interval $\Delta^{1}$. We will define the cotensoring to be the right Kan extension
$$
\xymatrix
{
(\{ \Delta_{1} \})^{op} \times X \ar[rr]^{   (-)^{\Delta^{1}  }} \ar[d] && X \\
Cat_{\infty}^{op} \times X \ar@{.>}[urr]_{(-)^{(-)}} &&  
}
$$
By \cite[Lemma 4.3.2.13]{LurieHTT} and the completeness of $X$, the right Kan extension exists. 

By \cite[Proposition 4.2.3.8]{LurieHTT}, we can regard as $F$ as the right Kan extension of $(-) ^{  (-)}|_{\Delta \times X}$. Note that by \cite[Lemma 2.5]{Haugseng}, $\Delta$ is a dense subcategory of $Cat_{\infty}$ in the sense that 
for each $x \in Cat_{\infty}$, $x$ is the colimit of 
$$
\Delta_{/x} \rightarrow \Delta \subseteq Cat_{\infty}.
$$ 
Thus, by the formula for Kan extensions (as in \cite[Proposition 4.2.3.8]{LurieHTT}), and the fact that $Hom_{X}(-, d)$ sends colimits to limits
it suffices to show that we have a isomorphism 
$$
Hom_{X}(c, d^{\Delta^{n}})  \cong Hom_{Cat_{\infty}}(\Delta^{n}, Hom_{X}(c, d))
$$
natural in $\Delta^{n}, c, d$.

However, appealing to \cite[Proposition 4.2.3.8]{LurieHTT} again, we note that $(-)^{(-)}|_{\Delta \times X}$ is the right Kan extension of $ (-)^{\Delta^{1}} $ along $\{ \Delta^{1}\} \subseteq \Delta$. Thus, reasoning as before, it suffices to show that $\Delta^{1}$ is dense in $\Delta$. But the fact that $\Delta^{n}$ is the colimit of 
$$
(\{ \Delta^{1} \})_{/\Delta^{n}} \rightarrow \{ \Delta^{1} \} \subseteq \Delta
$$
 can be reduced to statement about the corresponding ordinary categories $\Delta, \{ \Delta^{1} \} \subseteq \Delta $ which is something that can be easily verified by hand. 

\end{proof}

\begin{lemma}\label{lem3.4}
Suppose that $C$ is an $\infty$-bicategory. Suppose that
\begin{enumerate}
\item{$C$ has a tensor with $\Delta^{1}$}
\item{$C^{th}$ has all limits.}
\end{enumerate}
Then $C$ has all (weighted) limits. 
\end{lemma}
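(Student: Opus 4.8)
The plan is to reproduce, in the $\infty$-bicategorical setting, the classical strategy of Kelly (\cite[Section 2.1]{Kelly-Limits}) that realises every weighted limit as a conical limit of cotensors. Concretely, I would first bootstrap the single cotensor (power) with $\Delta^{1}$ into powers by arbitrary $K \in Cat_{\infty}$; then observe that in the presence of these powers the $(\infty,1)$-limits supplied by hypothesis (2) are automatically conical bicategorical limits in the sense of \Cref{def3.1}; and finally assemble an arbitrary weighted limit from powers and conical limits via the end formula.

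For the first step I would mimic the proof of \Cref{lem3.3}. Since $\Delta$ is dense in $Cat_{\infty}$, every $K$ is the colimit of the standard simplices indexing it, and since cotensoring converts colimits in the exponent variable into limits this forces the candidate power to be
\begin{equation*}
c^{K} \;\simeq\; \lim_{\Delta^{n}\in \Delta_{/K}} c^{\Delta^{n}},
\end{equation*}
where each $c^{\Delta^{n}}$ is itself a conical limit of copies of $c^{\Delta^{1}}$ (again by the density of $\{\Delta^{1}\}$ in $\Delta$ exploited in \Cref{lem3.3}), and both limits are the conical $(\infty,1)$-limits furnished by hypothesis (2) in $C^{th}$.

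The delicate point — and the one I expect to be the main obstacle — is that hypothesis (2) together with the Gepner--Haugseng--Nikolaus characterisation invoked in \Cref{rmk3.2} only controls the universal property at the level of mapping \emph{spaces} (i.e. after applying $\mathbf{i}$), whereas a genuine bicategorical power must satisfy the stronger universal property $Hom_{C}(x, c^{K}) \simeq Fun(K, Hom_{C}(x,c))$ at the level of mapping \emph{categories}. This is precisely the $2$-dimensional datum that the cotensor with $\Delta^{1}$ is meant to carry: the $2$-cells $x \to c^{K}$ are the $1$-cells $x \to (c^{K})^{\Delta^{1}} \simeq c^{K \times \Delta^{1}}$, so knowing the space-level universal property for all powers (including $c^{K \times \Delta^{1}}$) pins down both the objects and the morphisms of $Hom_{C}(x, c^{K})$, and hence the $\infty$-category itself. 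Making this reconstruction precise in the scaled-simplicial-set model, and checking its naturality in $x$, is where the real work lies; it is also what upgrades a bare $(\infty,1)$-limit in $C^{th}$ to a conical limit in $C$, the bicategorical analogue of the enriched fact that the presence of cotensors makes underlying limits conical.

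Granting the powers and the resulting conical limits, I would conclude as follows. For a general weight $w : J \to CAT_{\infty}$ and diagram $f : J \to C$, I would use the end presentation of the natural-transformation category,
\begin{equation*}
Nat_{J}\bigl(w, Hom_{C}(x, f(-))\bigr) \;\simeq\; \int_{j\in J} Fun\bigl(w(j), Hom_{C}(x, f(j))\bigr) \;\simeq\; \int_{j\in J} Hom_{C}\bigl(x, f(j)^{w(j)}\bigr),
\end{equation*}
and recognise the right-hand end (an end over the twisted-arrow $\infty$-category of $J^{th}$) as a conical limit of the powers $f(j)^{w(j)}$. Since $C$ now has both the powers $f(j)^{w(j)}$ and the conical limit computing the end, and since $Hom_{C}(x,-)$ preserves that conical limit naturally in $x$, the representing object is exactly the weighted limit characterised in \Cref{def3.1}, which establishes \Cref{lem3.4}.
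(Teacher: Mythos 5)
Your architecture is not the one the paper uses for this lemma, and more importantly it defers the one step that the lemma is actually about. The paper does not construct powers $c^{K}$ for all $K$ here, nor does it invoke the end formula --- that is the content of \Cref{lem3.3}, \Cref{thm3.6} and \Cref{cor3.7}, which surround this lemma. Instead, hypothesis (2) together with \Cref{rmk3.2} already hands you an object $l$ with the \emph{space-level} universal property $\mathbf{i}Hom_{C}(x,l)\simeq \mathbf{i}Nat_{J}(w, Hom_{C}(x,F(-)))$, natural in $x$, and the entire content of \Cref{lem3.4} is to upgrade this to an equivalence of mapping \emph{categories}. You correctly identify this upgrade as ``the delicate point'' and correctly identify the mechanism (the $\Delta^{1}$-tensor gives access to the $1$-simplices of the mapping categories), but you then write that making the reconstruction precise ``is where the real work lies'' and stop. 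That is the gap: you have reduced the lemma to itself.

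The missing argument, as the paper runs it, is short but is the whole point: form the comparison functor $\psi_{x}: Hom_{C}(x,l)\to Nat_{J}(w,Hom_{C}(x,F(-)))$; by naturality of the space-level equivalence applied at $x\otimes\Delta^{1}$ and the identification $Hom_{C}(x\otimes\Delta^{1},-)\simeq Hom_{C}(x,-)^{\Delta^{1}}$, both $\mathbf{i}\psi_{x}$ and $\mathbf{i}(\psi_{x}^{\Delta^{1}})$ are equivalences; since the mapping space $Hom_{M}(a,b)$ of an $\infty$-category $M$ is the fiber of $\mathbf{i}(M^{\Delta^{1}})\to\mathbf{i}M\times\mathbf{i}M$ and $\mathbf{i}$ preserves pullbacks (being a right adjoint), $\psi_{x}$ is essentially surjective and fully faithful, hence an equivalence. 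Two further problems with your longer route, even granting this upgrade: constructing each $c^{K}$ as a limit in $C^{th}$ and then promoting it to a bicategorical power would require rerunning the same upgrade once per power, creating a regress the paper's one-shot argument avoids; and your end presentation of $Nat_{J}(w,-)$ over the twisted arrow category of $J^{th}$ is itself only known a priori at the level of underlying spaces, so it smuggles the same space-versus-category problem back in at the last step.
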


\begin{proof}
Suppose that $F : J \rightarrow C$ is a diagram of $\infty$-bicategories and $w : J \rightarrow CAT_{\infty}$ be a weighting. 
By hypothesis, and \cref{rmk3.2} above we have an equivalence, natural in $x$

\begin{equation}\label{ijig}
\phi_{x} : \mathbf{i} Hom_{C}(x, l) \simeq \mathbf{i}Nat_{J}(w, Hom_{C}(x, F(-)))
\end{equation}
Postcomposition by the counit of the adjunction yields a map 
$$
\psi_{x} : Hom_{C}(x, l) \rightarrow Nat_{J}(w, Hom_{C}(x, F(-)))
$$
which we want to show is an equivalence of $\infty$-categories. 

By using the naturality of \cref{ijig},  we have a commutative diagram for $i = 0, 1$
$$
\xymatrix
{
\mathbf{i} Hom_{C}(x \otimes \Delta^{1}, l)  \ar[d]^{x \otimes d^{i} }  \ar[rr]_>>>>>>>>>>>{\phi_{x \otimes \Delta^{1}}} && \mathbf{i}Nat_{J}(w \otimes \Delta^{1}, Hom_{C}(x, F(-))) \ar[d]^{ w \otimes d^{i} } \\
\mathbf{i} Hom_{C}(x \otimes \Delta^{0}, l)  \ar[rr]_>>>>>>>>>>{\phi_{x}} &&  \mathbf{i}Nat_{J}(w \otimes \Delta^{0},  Hom_{C}(x, F(-)))
}
$$

which by the universal property of tensors, can be identified with a commutative diagram: 
$$
\xymatrix
{
\mathbf{i}(Hom_{C}(x, l)^{\Delta^{1}}) \ar[d] \ar[rr]_>>>>>>>>>>{\mathbf{i}(\psi_{x}^{\Delta^{1}}) } && \mathbf{i}(Nat_{J}(w, Hom_{C}(x, F(-)))^{\Delta^{1}}) \ar[d] \\
\mathbf{i}(Hom_{C}(x, l)) \ar[rr]_>>>>>>>>>>>>{\mathbf{i}\psi_{x}} && \mathbf{i}(Nat_{J}(w, Hom_{C}(x, F(-)))) 
}
$$

For brevity, let $M_{x} = Hom_{C}(x, l), N_{x} = Nat_{J}(w, Hom_{C}(x, F(-)))$. Let $a, b \in M_{x}$. Consider the commutative diagram
$$
\xymatrix
{
 \mathbf{i}(M_{x}^{\Delta^{1}}) \ar[rr]_{((-)^{d^{0}}, (-) ^{d^{1}}) } \ar[d]_{\mathbf{i} \psi_{x}^{\Delta^{1}} }  && \mathbf{i}M_{x} \times \mathbf{i}M_{x} \ar[d]^{\mathbf{i} \psi_{x} \times \mathbf{i} \psi_{x}  } &  \ar[l]_>>>>>{(a, b)} \, * \ar[d]^{id} \\
\mathbf{i}(N_{x}^{\Delta^{1}}) \ar[rr]_{(-)^{d^{0}}, (-) ^{d^{1}})} && \mathbf{i}N_{x} \times \mathbf{i}N_{x}  & \ar[l]_>>>>>{(a, b)}  \, *
}
$$
which induces an equivalence of pullbacks. By the description of mapping spaces in \cite[Section 1.2.2]{LurieHTT}, and the fact that $\mathbf{i}$ preserves pullbacks being a right adjoint, this can be identified with the induced map 
$$
\psi_{x} : Hom_{M_{x}}(a, b) \rightarrow Hom_{N_{x}}(\psi_{x}(a), \psi_{y}(a))
$$
Since $\psi_{x}$ induces equivalences of mapping spaces and $\mathbf{i} \psi_{x}$ is an equivalence of $\infty$-groupoids, $\psi_{x}$ is an equivalence of $\infty$-categories, as required.

\end{proof}

\begin{corollary}\label{co3.5}
Suppose that $C$ is an $\infty$-bicategory that has all limits indexed by $\infty$-categories, and a functor $(-)^{\Delta^{1}} : C \rightarrow C$. Then $X$ is cotensored over $CAT_{\infty}$. 
\end{corollary}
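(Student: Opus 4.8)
\emph{The plan} is to realize every cotensor as a weighted limit and to obtain it through the upgrade mechanism of \cref{lem3.4}. For an object $c \in C$ and $K \in CAT_\infty$, the power $c^K$ is precisely the weighted limit of the point-indexed diagram $* \to C$ with value $c$ and weight $* \to CAT_\infty$ with value $K$: the characterization \cref{eqxxxx} specializes to $Hom_C(x, c^K) \simeq Nat_*(K, Hom_C(x, c)) \simeq Fun(K, Hom_C(x, c))$, which is exactly the universal property of a cotensor. Thus being cotensored over $CAT_\infty$ amounts to the existence of all of these particular weighted limits, and it suffices to produce them one $K$ at a time.

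First I would assemble the two inputs that \cref{lem3.4} consumes. The underlying $\infty$-category $C^{th}$ is complete: by \cref{rmk3.2} each conical limit in $C$ descends along $\mathbf{i}$ to the corresponding limit in $C^{th}$, so the hypothesis that $C$ has all limits indexed by $\infty$-categories forces $C^{th}$ to have all limits. In particular the underlying cotensor of $c$ by $K$, namely the conical limit $\lim_{K^{th}} c$ of the constant diagram, exists in $C^{th}$; taking $c^K$ to be this object, its core-level universal property furnishes the natural equivalence $\phi_x : \mathbf{i}Hom_C(x, c^K) \simeq \mathbf{i}Fun(K, Hom_C(x, c))$ that opens the proof of \cref{lem3.4}. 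Secondly, the given functor $(-)^{\Delta^1}$ supplies the $\Delta^1$-power, whose defining equivalence $Hom_C(x, d^{\Delta^1}) \simeq Fun(\Delta^1, Hom_C(x, d))$ computes the arrow $\infty$-category of each mapping category $Hom_C(x, d)$; this is the device by which \cref{lem3.4} promotes the core equivalence $\phi_x$ to a fully faithful comparison, hence to an equivalence of $\infty$-categories.

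With both inputs in place, the argument of \cref{lem3.4} applies to each point-indexed weighted limit and exhibits $c^K$ with its full $(\infty,2)$-universal property; letting $K$ range over $CAT_\infty$ shows that $C$ is cotensored over $CAT_\infty$. The step I expect to demand the most care is reconciling the datum supplied here with the one actually used in \cref{lem3.4}: that lemma probes the arrow level by tensoring the test object as $x \otimes \Delta^1$, whereas the present hypothesis offers the power $(-)^{\Delta^1}$ on the target side. I would therefore recast the core-to-category upgrade so that the arrow $\infty$-categories $Fun(\Delta^1, Hom_C(x, c^K))$ and $Fun(\Delta^1, Fun(K, Hom_C(x, c)))$ are accessed by cotensoring $c^K$ and the entry $c$ by $\Delta^1$, rather than by tensoring $x$. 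This substitution is legitimate precisely because $(-)^{\Delta^1}$ is available on all objects, and it leaves the pullback-of-mapping-spaces computation of \cref{lem3.4} untouched. Once this dualization is spelled out no further construction is needed, the density of $\Delta^1$ in $CAT_\infty$ underlying \cref{lem3.3} being already absorbed into the completeness of $C^{th}$.
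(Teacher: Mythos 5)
There is a genuine gap at the point where you produce the candidate object: the cotensor $c^{K}$ is \emph{not} the conical limit $\lim_{K^{th}} c$ of the constant diagram. The core-level universal property you need is $\mathbf{i}Hom_{C}(x, c^{K}) \simeq \mathbf{i}Fun(K, Hom_{C}(x,c))$, i.e.\ the \emph{space of functors} from $K$ into the mapping $\infty$-category, whereas the conical limit of a constant diagram computes $Map_{\mathcal{S}}(|K^{th}|, \mathbf{i}Hom_{C}(x,c))$, the mapping space out of the groupoidification of $K$. Already for $C = CAT_{\infty}$ and $c = K = \Delta^{1}$ these disagree: the conical limit of the constant $\Delta^{1}$-indexed diagram at $\Delta^{1}$ is $\Delta^{1}$ itself (the index has an initial object), while the cotensor is $Fun(\Delta^{1},\Delta^{1})$. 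So the object you feed into the \cref{lem3.4}-style upgrade is the wrong one, and the equivalence $\phi_{x}$ that you say "opens the proof" does not hold for this candidate; no amount of care about the tensor-versus-power dualization will repair that.

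The density of $\{\Delta^{1}\}$ in $Cat_{\infty}$, which you set aside as "already absorbed into the completeness of $C^{th}$", is in fact where the correct candidate comes from. The paper's proof defines $c^{K}$ as the conical limit of the \emph{non-constant} diagram $\{\Delta^{1}\}_{/K} \rightarrow C$ sending $(\Delta^{1} \rightarrow K)$ to $c^{\Delta^{1}}$ --- exactly the Kan-extension formula of \cref{lem3.3} --- and this is the only place the hypothesis that $C$ has all limits indexed by $\infty$-categories is used. It then verifies the $(\infty,2)$-universal property by reducing to $C = CAT_{\infty}$ via representability and invoking \cref{lem3.4} there, where the computation on underlying $\infty$-categories is the one from \cref{lem3.3}. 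Your overall architecture (construct the object, check the core-level property, then upgrade to an equivalence of mapping $\infty$-categories using the $\Delta^{1}$-power) matches the paper's, but the construction step itself is incorrect as written and needs to be replaced by the density/Kan-extension formula.
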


\begin{proof}
The argument of \cref{lem3.3} shows that we have a functor 
$$
C \times CAT_{\infty} \rightarrow C
$$ 
given by 
$$
x^{C} = \underset{   (\{ \Delta^{1}\}^{ /C}) \xrightarrow{p} CAT_{\infty}}{lim} x^{\Delta^{1}}
$$
 $(-)^{/ C}$ is the slice construction from \cref{con2.12}  $p$ is the projection. We need to verify that this cotensor product satisfies the appropriate universal property. By the representable nature of limits 
and the fact that $CAT_{\infty}$ has tensors with $\Delta^{1}$, \cref{lem3.4} implies that it suffices to show that the underlying functor of $\infty$-categories satisfies the appropriate universal property. However, the underlying $\infty$-category of $C^{/X}$ is $(C^{th})^{/X}$ by the construction of \cite[Section 4.1]{Gagna-Harpaz-Lanari}. By \cite[Proposition 4.2.1.5]{LurieHTT}, the value of the functor on underlying $\infty$-categories is equivalent to the limit of the composite: 
$$
(C^{th})_{/X} \rightarrow (C^{th})^{/X} \xrightarrow{p^{th}} Cat_{\infty} 
$$
where the first map is the natural map from \cite[Proposition 4.2.1.5]{LurieHTT}. 
Thus, the construction reduces to the of the construction of \cref{lem3.3} on underlying $\infty$-categories, hence the result.  

\end{proof}

\begin{theorem}\label{thm3.6}
Suppose that $C$ is an $\infty$-bicategory. Then if $C$ has a powering $(-)^{\Delta^{1}}$ and $C^{th}$ has all (unweighted) limits, then $C$ has all (weighted) limits
\end{theorem}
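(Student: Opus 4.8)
The plan is to run the classical reduction of \cite[Section 2.1]{Kelly-Limits} already exploited in \cref{lem3.4}: present an arbitrary weighted limit as a conical limit of cotensors, and then produce these two ingredients from the hypotheses. Given $f : J \to C$ and a weight $w : J \to CAT_\infty$, the object computing the limit should be the end $\int_{j \in J} f(j)^{w(j)}$, that is, a conical limit of the diagram of cotensors $j \mapsto f(j)^{w(j)}$. Thus it suffices to show that $C$ has (a) all conical $(\infty,2)$-limits and (b) all cotensors over $CAT_\infty$; the natural equivalence of \cref{def3.1} for a general $w$ then follows by feeding the universal properties of the cotensors $f(j)^{w(j)}$ into that of the end.

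First I would establish (a). Since $C^{th}$ has all limits, \cref{rmk3.2} supplies, for each conical diagram $f$, an object $l$ together with a natural-in-$x$ equivalence of $\infty$-groupoids $\mathbf{i}\,Hom_C(x,l) \simeq \mathbf{i}\,Nat_J(\ast, Hom_C(x,f(-)))$ exhibiting $l$ as the limit of $f^{th}$. The task is to promote this core-level equivalence to an equivalence of the full mapping $\infty$-categories, and this is where the powering $(-)^{\Delta^1}$ is used, in the spirit of \cref{lem3.4}. Writing $M_x = Hom_C(x,l)$ and $N_x = Nat_J(\ast, Hom_C(x,f(-)))$, the comparison functor $\psi_x : M_x \to N_x$ is already essentially surjective, being an equivalence on cores, so it remains to check full faithfulness. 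Using $Hom_C(x,l)^{\Delta^1} \simeq Hom_C(x,l^{\Delta^1})$ and the fact that $\mathbf{i}$ preserves pullbacks (as in \cref{lem2.11}), one recovers each mapping space $Hom_{M_x}(a,b)$ as the pullback of $\mathbf{i}(M_x^{\Delta^1}) \to \mathbf{i}M_x \times \mathbf{i}M_x$ along $(a,b)$, and likewise for $N_x$; comparing the two pullbacks shows that $\psi_x$ induces equivalences on mapping spaces. Hence $\psi_x$ is an equivalence of $\infty$-categories and $l$ is a genuine conical $(\infty,2)$-limit.

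Ingredient (b) is then immediate: by \cref{co3.5}, an $\infty$-bicategory that has all conical limits and a powering $(-)^{\Delta^1}$ is cotensored over $CAT_\infty$. Combining (a) and (b) with the end presentation above produces the natural equivalence $Hom_C(x,l) \simeq Nat_J(w, Hom_C(x,f(-)))$ required by \cref{def3.1} for an arbitrary weight, which is the desired conclusion.

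The hard part will be the upgrade in step (a). The difficulty is that the \emph{core} functor $\mathbf{i}$ does not commute with the powering $(-)^{\Delta^1}$, so the $(\infty,2)$-universal property of $l^{\Delta^1}$ cannot simply be read off from the core-level property of $l$. Consequently the pullback comparison must be set up with care: one needs the powering to interact correctly with the conical limit along the evaluation maps $d^0, d^1 : (-)^{\Delta^1} \to (-)$, so that the two pullback squares for $M_x$ and $N_x$ are genuinely identified. This compatibility --- which in \cref{lem3.4} is furnished by varying the source object through the tensor $x \otimes \Delta^1$, and which here must instead be extracted from the powering applied to the target $l$ --- is the crux, and I would expect the bulk of the work to go into verifying it.
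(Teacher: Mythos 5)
Your overall decomposition agrees with the paper's: express an arbitrary weighted limit as a conical limit of cotensors via the end over the twisted arrow category, and separately upgrade core-level (one-dimensional) universal properties to genuine $(\infty,2)$-categorical ones. The gap is in your step (a), and it is not merely a verification you have postponed --- the mechanism you propose cannot close it. To prove that $\psi_{x} : M_{x} \rightarrow N_{x}$ is fully faithful you must compute the arrow space $\mathbf{i}(M_{x}^{\Delta^{1}})$ by some means \emph{other} than the functor $\psi_{x}$ itself. Powering the target gives $\mathbf{i}(M_{x}^{\Delta^{1}}) \simeq \mathbf{i}\,Hom_{C}(x, l^{\Delta^{1}})$, but the only universal property available --- the core-level one from \cref{rmk3.2} --- describes $\mathbf{i}\,Hom_{C}(x', l)$ for varying sources $x'$; it says nothing about maps into the \emph{different object} $l^{\Delta^{1}}$. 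To use it you would need to know beforehand that $l^{\Delta^{1}}$ is a core-level conical limit of $f(-)^{\Delta^{1}}$, and the defining property of the cotensor, $Hom_{C}(x, l^{\Delta^{1}}) \simeq Hom_{C}(x,l)^{\Delta^{1}}$, simply hands you back the arrow category of $M_{x}$, which is exactly the thing you are trying to identify. The argument is circular.

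This is precisely why \cref{lem3.4} assumes a \emph{tensor} $x \otimes \Delta^{1}$ on the source rather than a powering of the target: the instance $\phi_{x \otimes \Delta^{1}}$ of the given natural equivalence provides independent control of $\mathbf{i}\,Hom_{C}(x \otimes \Delta^{1}, l) \simeq \mathbf{i}(M_{x}^{\Delta^{1}})$. The asymmetry is already present in classical $2$-category theory: tensors with the arrow upgrade one-dimensional limits to two-dimensional ones, while cotensors do the same only for colimits. Since the hypotheses of \cref{thm3.6} supply only a powering, the paper first reduces by representability to $C = CAT_{\infty}$, where tensors with $\Delta^{1}$ do exist, and only then invokes \cref{lem3.4}; the remaining content is that $Cat_{\infty}$, having conical limits and cotensors, has all weighted limits by the twisted-arrow formula. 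If you wish to keep your structure, replace step (a) by this reduction (or otherwise manufacture a tensor with $\Delta^{1}$); your ingredient (b) via \cref{co3.5} and the end presentation are then fine.
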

\begin{proof}

By the representable nature of limits, we are reduced to proving the statement for $CAT_{\infty}$. In this case, by \cref{lem3.4}, we want to show that weighted limits in the underlying $\infty$-category can be computed as ordinary limits. Since $Cat_{\infty}$ is cotensored over itself this follows from the formulas for weighted colimits from \cite{Gepner-Haugseng-Nikolaus} in terms of the twisted arrow category. 

\end{proof}

\begin{corollary}\label{cor3.7}
Suppose that $C$ is an $\infty$-category. Suppose that $C^{th}$ has all finite limits and a cotensoring $(-)^{\Delta^{1}}$. Then $C$ has all finite weighted limits. 
\end{corollary}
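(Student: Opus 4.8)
The plan is to run the proof of \Cref{thm3.6} while keeping track of finiteness at every step. Recall that the limit $l$ of $f : J \to C$ weighted by $w : J \to CAT_\infty$ is computed by the end (twisted-arrow) formula
\[
l \;\simeq\; \int_{j \in J} f(j)^{w(j)},
\]
which exhibits it as a conical limit over the twisted arrow category $\mathrm{Tw}(J)$ of the cotensors $f(j)^{w(j)}$. Thus the existence of \emph{finite} weighted limits reduces to two claims: that the cotensor $x^{D}$ exists in $C$ whenever $D$ is a finite $\infty$-category, and that the end above is a finite conical limit whenever $J$ and the weights are finite.

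For the first claim I would prove a finite analogue of \Cref{co3.5}. Because \Cref{lem3.3} relies on cocompleteness, which is unavailable here, I would instead use that a finite $\infty$-category $D$ admits, by definition, a presentation as a finite colimit of simplices $D \simeq \operatorname{colim}_{k \in K} \Delta^{n_k}$ with $K$ finite, and \emph{define} $x^{D} := \lim_{k \in K^{\mathrm{op}}} x^{\Delta^{n_k}}$, each $x^{\Delta^{n_k}}$ being itself a finite limit of copies of the power $x^{\Delta^1}$ along the spine of $\Delta^{n_k}$. All the limits involved are finite and hence exist in $C^{th}$. Applying $\mathrm{Hom}_C(y,-)$, which preserves these limits and carries $x^{\Delta^1}$ to $\mathrm{Fun}(\Delta^1, \mathrm{Hom}_C(y,x))$, identifies $\mathrm{Hom}_C(y, x^{D})$ with $\mathrm{Fun}(D, \mathrm{Hom}_C(y,x))$; this shows that $x^{D}$ is genuinely the cotensor and, in particular, does not depend on the chosen presentation.

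For the second claim, once $J$ and each $w(j)$ are finite, every integrand $f(j)^{w(j)}$ exists by the previous paragraph, and the end is a conical limit over $\mathrm{Tw}(J)$, which is finite because $J$ is. Hence $l$ is a finite conical limit in $C^{th}$ and exists by hypothesis. The universal property \eqref{eqxxxx} is then verified exactly as in \Cref{lem3.4}: by the representable nature of limits it suffices to treat $C = CAT_\infty$, where applying the core functor $\mathbf{i}$ together with the cotensor by $\Delta^1$ promotes the underlying $\infty$-groupoid equivalence of \Cref{rmk3.2} to an equivalence of mapping $\infty$-categories.

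The step I expect to be the main obstacle is the precise finiteness bookkeeping---above all, verifying that the twisted arrow category $\mathrm{Tw}(J)$ of a finite $\infty$-category is again finite, and that the spine decomposition genuinely presents $x^{\Delta^{n}}$ as a finite limit of powers $x^{\Delta^1}$. Granting these, every step of \Cref{thm3.6} restricts verbatim to the finite setting, yielding all finite weighted limits in $C$.
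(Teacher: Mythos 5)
Your argument is correct and takes essentially the same route as the paper: the paper's proof likewise reduces the finite weighted limit to a conical limit of the cotensors $F(-)^{w(-)}$ over a finite end-indexing category (it uses Haugseng's formula over $\Delta_{/J}$ where you use $\mathrm{Tw}(J)$) and then invokes the hypotheses of finite conical limits and cotensors with finite $\infty$-categories. The construction of those finite cotensors from $(-)^{\Delta^{1}}$ via spine decompositions and finite colimit presentations, which you spell out, is precisely the step the paper leaves implicit, so your write-up is if anything more complete on the finiteness bookkeeping you flag as the main obstacle.
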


\begin{proof}

The representable nature of limits allows us to reduce to the case of weighted limits on the underlying $\infty$-category of $C$. 

By \cite{Haugseng}
A finite limit of $F :J \rightarrow C^{th}$ weighted by $w : J \rightarrow Cat_{\infty}$ can be computed as the limit on underlying $\infty$-categories of
$$
\Delta_{/J} \rightarrow J \times J^{op} \xrightarrow{p} (J) \times (J)^{op} \xrightarrow{F(-)^{w(-)}}  \mathcal{C} 
$$
Since $\Delta_{/J}$ is finite when $J$ is, finite weighted limits in the underlying $\infty$-category exist in the presence of ordinary finite limits and cotensors with finite $\infty$-categories. However, the latter exist by the arguments of \ref{lem3.3}.

\end{proof}

\section{Internal Higher Category Theory in an $\infty$-topos} \label{internal}

In this section, we will briefly review the rudiments of internal higher category theory, as developed in \cite{Martini}, \cite{Martini-Wolf-1} and \cite{Martini-Wolf-2}.

\begin{definition}\label{def4.1}
We call the inclusion $I_{n} : \Delta^{1} \times_{\Delta^{0}} \Delta^{1} \cdots \times_{\Delta^{0}} \Delta^{1} \rightarrow \Delta^{n}$ the spine inclusion. We will define 
$$
E_{1} = \Delta^{0} \coprod_{\Delta^{\{ 0, 2\}}} \Delta^{3} \coprod_{\Delta^{\{ 1, 3\}}} \Delta^{0}.
$$
This is known as the walking equivalence.
\end{definition}

\begin{definition}\label{def4.2}
Suppose that $\mathcal{B}$ is an $\infty$-topos. We will write $\mathcal{B}_{\Delta} := Fun(\Delta^{op}, \mathcal{B})$ for the $\infty$-topos of simplicial objects in $\mathcal{B}$. We will call an object $C \in \mathcal{B}_{\Delta}$ a $\mathcal{B}$-category if the maps
\begin{enumerate}
\item{$C_{n} \rightarrow C_{1} \times_{C_{0}} C_{1} \cdots \times_{C_{0}} C_{1} $}
\item{$C_{3} \rightarrow C_{1} \times C_{1} \times_{C_{0} \times C_{0}} C_{3} $}
\end{enumerate}
induced by the spine inclusion and the map $E^{1} \rightarrow \Delta^{0}$ are equivalences of $\infty$-categories. We will denote by $\mathrm{Cat}(\mathcal{B})$ the full subcategory of $\mathcal{B}_{\Delta}$, whose objects are the $\mathcal{B}$-categories.

Each object $B \in \mathcal{B}$ will be identified with a constant simplicial object, which is evidently a $\mathcal{B}$-category. 

\end{definition}

\begin{remark}\label{rmk4.3}
In \cite[Definition 3.1.5 and Definition 3.2.1]{Martini}, a $\mathcal{B}$-category is defined as an object of $\mathcal{B}_{\Delta}$ which is \emph{internally local} to the maps $I_{n} $ and $E^{1} \rightarrow \Delta^{0}$, and \cite[Proposition 3.2.7]{Martini} shows that this definition is equivalent to the one given above. 

\end{remark}

\begin{theorem}\label{equivalencewsheaves} (\cite[Proposition 1.2.2.1]{Martini-Wolf-2})
Let $Sh_{Cat_{\infty}}(\mathcal{B})$ denote the category of limit preserving functors $\mathcal{B}^{op} \rightarrow Cat_{\infty}$, we call this the category $\mathcal{B}$-valued sheaves. Then there is an equivalence of $\infty$-categories $Sh_{Cat_{\infty}}(\mathcal{B}) \simeq Cat(\mathcal{B})$. 
\end{theorem}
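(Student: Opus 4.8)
The plan is to exhibit an explicit adjoint equivalence between $Sh_{Cat_\infty}(\mathcal{B})$ and $Cat(\mathcal{B})$ by unwinding both sides along the defining descent conditions. First I would recall that, since $\mathcal{B}$ is an $\infty$-topos, it is presentable and generated under colimits by its objects, so a limit-preserving functor $F : \mathcal{B}^{op} \to Cat_\infty$ is determined by its restriction to any generating subcategory and, more to the point, by how it transports the colimit presentations of objects of $\mathcal{B}$ into limits in $Cat_\infty$. The key structural fact I would lean on is that $\mathcal{B}_\Delta = Fun(\Delta^{op}, \mathcal{B})$ is itself an $\infty$-topos, and that an object $C \in \mathcal{B}_\Delta$ is nothing but a simplicial diagram $[n] \mapsto C_n$ of objects of $\mathcal{B}$; under the (co)Yoneda/presentability dictionary, the data of such a simplicial object is equivalent to the data of a simplicial diagram of limit-preserving functors $\mathcal{B}^{op} \to \mathcal{S}$, i.e. a simplicial object in $Sh_{\mathcal{S}}(\mathcal{B}) \simeq \mathcal{B}$.

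The heart of the argument is then a comparison of the two Segal-type conditions. On the $Cat(\mathcal{B})$ side, an object is a simplicial object $C_\bullet$ satisfying the Segal condition (1) $C_n \simeq C_1 \times_{C_0} \cdots \times_{C_0} C_1$ and the completeness condition (2) governed by $E^1 \to \Delta^0$, as in \Cref{def4.2}. On the sheaf side, I would show that a limit-preserving functor $\mathcal{B}^{op} \to Cat_\infty$ is the same as a functor landing in complete Segal spaces internal to $\mathcal{B}$: one uses that $Cat_\infty$ is itself the $\infty$-category of complete Segal spaces, so a sheaf valued in $Cat_\infty$ unwinds, via the identification $Cat_\infty \simeq$ CSS and the fact that limits of sheaves are computed objectwise, into a simplicial diagram of $\mathcal{S}$-valued sheaves satisfying exactly the Segal and completeness conditions. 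Combining this with $Sh_{\mathcal{S}}(\mathcal{B}) \simeq \mathcal{B}$ produces precisely a simplicial object of $\mathcal{B}$ subject to conditions (1) and (2), which is the definition of a $\mathcal{B}$-category. Making each of these identifications natural and compatible is what upgrades the bijection-on-objects statement to an equivalence of $\infty$-categories.

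The main obstacle I anticipate is keeping careful track of \emph{which} (co)completeness hypotheses are needed to pass freely between the ``functor out of $\mathcal{B}^{op}$'' picture and the ``simplicial object in $\mathcal{B}$'' picture, and in particular verifying that the Segal and completeness conditions correspond to one another on the nose rather than merely up to some correction. The cleanest route, which I would adopt, is to avoid reproving this from scratch and instead invoke the internal-to-$\mathcal{B}$ straightening/unstraightening machinery of \cite{Martini} and \cite{Martini-Wolf-2}: one identifies $Sh_{Cat_\infty}(\mathcal{B})$ with the $\infty$-category of $Cat_\infty$-valued sheaves, observes that $Cat_\infty$-valued sheaves are exactly sheaves of complete Segal spaces, and then applies the equivalence $Sh_{\mathcal{S}}(\mathcal{B}) \simeq \mathcal{B}$ levelwise together with \Cref{rmk4.3}'s characterization of $\mathcal{B}$-categories as the internally local objects. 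Since the cited \cite[Proposition 1.2.2.1]{Martini-Wolf-2} records precisely this equivalence, the proof ultimately amounts to reorganizing that statement into the concrete Segal-object description of \Cref{def4.2} and checking naturality of the resulting comparison functor.
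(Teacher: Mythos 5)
The paper does not actually prove this statement: it is imported verbatim as \cite[Proposition 1.2.2.1]{Martini-Wolf-2} in a background section, so there is no in-paper argument to compare yours against. That said, your sketch is a correct outline of the standard proof, and it matches in structure how the cited reference establishes the equivalence: identify $Cat_\infty$ with complete Segal spaces inside $Fun(\Delta^{op},\mathcal{S})$, use that limits in $Fun(\mathcal{B}^{op}, Fun(\Delta^{op},\mathcal{S}))$ are computed levelwise to identify limit-preserving functors $\mathcal{B}^{op}\to Fun(\Delta^{op},\mathcal{S})$ with simplicial objects in $Sh_{\mathcal{S}}(\mathcal{B})\simeq\mathcal{B}$, and then check that landing pointwise in complete Segal spaces corresponds to the Segal and completeness conditions of \cref{def4.2}. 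Two points you flag yourself are indeed where the care goes: (i) the inclusion $Cat_\infty\hookrightarrow Fun(\Delta^{op},\mathcal{S})$ is a right adjoint (reflective localization), so postcomposing with it preserves the limit-preservation hypothesis in both directions; and (ii) the translation between the pointwise Segal/completeness conditions on the sheaf and the internal conditions on the simplicial object in $\mathcal{B}$ is exactly the content of \cref{rmk4.3}, i.e.\ \cite[Proposition 3.2.7]{Martini}, since equivalences and finite limits in $\mathcal{B}$ are detected and computed objectwise under $Sh_{\mathcal{S}}(\mathcal{B})\simeq\mathcal{B}$. So your proposal is sound; it simply reproves (in outline) the external result that the paper chooses to quote.
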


\begin{construction}\label{con4.4} (see \cite{Martini-Wolf-2})
By \cite[Proposition 1.2.1.4]{Martini-Wolf-2}, $Cat(\mathcal{B})$ has an internal hom, which we denote by 
$$
\underline{Fun}_{\mathcal{B}}(X, Y).
$$

Given a geometric morphism $f_{*} : \mathcal{A} \leftrightarrows  \mathcal{B} : f^{*}$ of $\infty$-topoi, we have an  adjunction induced by precomposition:
$$
Cat(\mathcal{A}) \leftrightarrows Cat(\mathcal{B}).
$$
In particular, if we consider the terminal map $\mathcal{B} \rightarrow *$, we obtain an adjunction of $\infty$-categories, 
$$
const_{\mathcal{B}} : Cat_{\infty} \leftrightarrows  Cat(\mathcal{B}) : \Gamma_{\mathcal{B}}.
$$
This allows us to define bifunctors
\begin{enumerate}
\item{(Functor $\infty$-category) $Fun_{\mathcal{B}}(X, Y) := \Gamma_{\mathcal{B}} \underline{Fun}_{\mathcal{B}}(X, Y)$}
\item{(Cotensoring) $(-)^{(-)} := Fun_{\mathcal{B}}(const_{\mathcal{B}}(-), (-))$}
\end{enumerate}
We will often implicitly identify a simplicial set $S$ with $const_{\mathcal{B}}(S)$. 

\end{construction}

\begin{construction}\label{con4.5}
We will write $CAT_{\mathcal{B}}$ for the $\infty$-bicategory from \cite[Remark 3.4.3]{Martini}, whose objects are $\mathcal{B}$-categories and such that 
$Hom_{CAT_{\mathcal{B}}}(X, Y) = Fun_{\mathcal{B}}(X, Y)$.  

\end{construction}

\begin{definition}\label{def4.6} (see \cite[Proposition 1.2.11.2 and Definition 1.2.3.1]{Martini-Wolf-2}).

Let $C$ be a $\mathcal{B}$-category. If $a \in C$, we call $c : a \rightarrow C$ an object of $C$ in context $a$.
Given an object $a \in \mathcal{B}$, the \emph{mapping $\mathcal{B}_{/A}$-groupoid} associated to $c, d : a \rightrightarrows C$ is defined to be the pullback 
$$
\xymatrix
{
Hom_{C}(c, d) \ar[r] \ar[d] & C_{1} \ar[d]^{(d_{0}, d_{1})} \\
A \ar[r]_{(c, d)} & C_{0} \times C_{0}
}
$$
An adjunction between $\mathcal{B}$-categories is an equivalence of functors
$$
Hom_{D}(l(-), (-)) \cong map_{C}((-), r(-)).
$$
\end{definition}

\begin{definition}\label{def4.7}
Let $I, C$ be $\mathcal{B}$-categories. 
A colimit functor is a left adjoint to the diagonal map $diag: \mathcal{B} \rightarrow \underline{Fun}_{\mathcal{B}}(I, \mathcal{B})$. There is an evident dual notion of limit functor. 
A \emph{colimit} for a diagram $d : A \rightarrow \underline{Fun}_{\mathcal{B}}(I, C)$ is a corepresenting object of $map_{\underline{Fun}_{\mathcal{B}}(I, C)}(d, diag)$. Similarly, we can define a limit of diagram as a representing object. 
\end{definition}

\begin{construction}\label{con4.8}

By \cite[1.2.6]{Martini-Wolf-2}, there is a (large) $\mathcal{B}$-category, called the \emph{universe}, which under the equivalence of \cref{equivalencewsheaves}, is given by the sheaf 
$$
\Omega(A) \simeq \mathcal{B}_{/A}.
$$
Given two objects of $\mathcal{B}_{/A}$, which can be viewed as objects of $\Omega$ in context $A$, we have an equivalence
$$
map_{\Omega}(G, H) \simeq Fun_{\mathcal{B}_{/A}}(G, H).
$$ 

For a $\mathcal{B}$-category $C$, we write $\underline{Psh}_{\mathcal{B}}(C) := \underline{Fun}_{\mathcal{B}}(C, \Omega_{\mathcal{B}})$.
\end{construction}

\begin{definition}\label{def4.9}
We say that a large $\mathcal{B}$-category  is presentable if it is equivalent to a $\mathcal{B}$-categorical localization of $\underline{Psh}_{\mathcal{B}}(C)$ for some small $\mathcal{B}$-category $C$ (see \cite[Section 2.4]{Martini-Wolf-2} for more details).
We say that a $\mathcal{B}$-category $C$ is a $\mathcal{B}$-topos if it satisfies the following conditions: 
\begin{enumerate}
\item{It is presentable.}
\item{It satisfies descent. That is, the slice functor $C_{/-} : C \rightarrow Cat_{\mathcal{B}}$ is cocontinuous (see \cite[Section 3.1]{Martini-Wolf-2} for more details).}
\end{enumerate}

We say that a morphism of $\mathcal{B}$-topoi is \emph{algebraic} if preserves colimits and finite limits. We will write $\underline{Fun}^{alg}_{\mathcal{B}}(X, Y)$ for the full subcategory of $\underline{Fun}_{\mathcal{B}}(X, Y)$ on algebraic morphisms and $Fun^{alg}_{\mathcal{B}}(X, Y)$ for the global sections of  $\underline{Fun}^{alg}(X, Y)$.

We will write $TOP_{\mathcal{B}}$ for the $\infty$-bicategory of $\mathcal{B}$-topoi. That is, the $\infty$-bicategory whose objects are $\mathcal{B}$-topoi and such that the $\infty$-category of morphisms between $X, Y$ is given by $Fun_{\mathcal{B}}^{alg}(X, Y)$. We will write $Top_{\mathcal{B}} : = TOP_{\mathcal{B}}^{th}$.
\end{definition}

\begin{remark}\label{rmk4.10}
The adjoint functor theorem for $\mathcal{B}$-categories (\cite[Proposition 2.4.3.1]{Martini-Wolf-2}) implies that every algebraic morphism has a right adjoint. The right adjoint of an algebraic morphism is called a \emph{geometric morphism}.  
\end{remark}

The following is an important characterization of $\mathcal{B}$-topoi.

\begin{theorem}\label{thm4.11} (see \cite[Theorem 3.2.5.1]{Martini-Wolf-2})
 There is an equivalence of $\infty$-categories:
$$
Top_{\mathcal{B}} \simeq Top_{/\mathcal{B}}^{alg}
$$
given by taking global sections. 
\end{theorem}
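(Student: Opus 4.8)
The plan is to exhibit the global sections functor $\Gamma_{\mathcal{B}}$ itself as the equivalence and to build an explicit inverse. Under the sheaf description of \Cref{equivalencewsheaves}, a $\mathcal{B}$-topos $C$ is the same datum as a limit-preserving functor $F_C : \mathcal{B}^{op} \to Cat_{\infty}$, and $\Gamma_{\mathcal{B}}$ is evaluation at the terminal object, $\Gamma_{\mathcal{B}}(C) \simeq F_C(1_{\mathcal{B}})$; this is consistent with $\Gamma_{\mathcal{B}}(\Omega_{\mathcal{B}}) \simeq \mathcal{B}_{/1} \simeq \mathcal{B}$ for the universe of \Cref{con4.8}. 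First I would show that $\mathcal{E} := \Gamma_{\mathcal{B}}(C)$ is an $\infty$-topos: internal presentability of $C$ should externalize to presentability of $\mathcal{E}$, while the internal descent condition of \Cref{def4.9}(2)---cocontinuity of the slice functor $C_{/-}$---should translate into the external descent (Giraud) axioms for $\mathcal{E} = F_C(1)$. Simultaneously I would produce the structure geometric morphism $p : \mathcal{E} \to \mathcal{B}$ from the terminal geometric morphism $C \to \Omega_{\mathcal{B}}$ internal to $\mathcal{B}$-topoi (equivalently, the canonical algebraic morphism $\Omega_{\mathcal{B}} \to C$): applying $\Gamma_{\mathcal{B}}$ to the latter yields $p^{*} : \mathcal{B} \to \mathcal{E}$, and internal presentability of $C$ should guarantee that $p$ is bounded, so that $\Gamma_{\mathcal{B}}$ lands in $Top_{/\mathcal{B}}^{alg}$.

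Next I would construct the inverse functor. Given a bounded geometric morphism $p : \mathcal{E} \to \mathcal{B}$, I would assign the functor $A \mapsto \mathcal{E}_{/p^{*}A}$. The key checks are that this is limit-preserving in $A$ (so that by \Cref{equivalencewsheaves} it defines a $\mathcal{B}$-category), and that the resulting $\mathcal{B}$-category is presentable and satisfies descent (so that it is a $\mathcal{B}$-topos in the sense of \Cref{def4.9}). Limit-preservation is exactly descent in the $\infty$-topos $\mathcal{E}$ combined with the cocontinuity of $p^{*}$: the slice functor $\mathcal{E}_{/-}$ sends colimits to limits, and $p^{*}$ preserves colimits. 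Presentability is where boundedness of $p$ enters, providing an internal small generating family, and internal descent for $A \mapsto \mathcal{E}_{/p^{*}A}$ is inherited from descent in $\mathcal{E}$. This recovers the universe when $p = \mathrm{id}_{\mathcal{B}}$, since $\mathcal{B}_{/A} = \Omega_{\mathcal{B}}(A)$.

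Finally I would verify that the two constructions are mutually inverse. One composite is immediate: evaluating $A \mapsto \mathcal{E}_{/p^{*}A}$ at $A = 1_{\mathcal{B}}$ returns $\mathcal{E}_{/p^{*}1} \simeq \mathcal{E}$, and unwinding identifies the recovered structure map with $p$. The other composite is the substantive one: starting from a $\mathcal{B}$-topos $C$ with $\mathcal{E} = F_C(1)$ and $p$ its structure morphism, I must show that the natural comparison $F_C(A) \xrightarrow{\ \sim\ } \mathcal{E}_{/p^{*}A}$ is an equivalence for all $A \in \mathcal{B}$. This is precisely the reconstruction of the internal $\mathcal{B}$-topos from its global $\infty$-topos of sections together with its descent datum, and it is where the condition of \Cref{def4.9}(2) does the real work. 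To upgrade from an equivalence of objects to an equivalence of $\infty$-categories, I would then check full faithfulness on the mapping spaces, comparing $Fun^{alg}_{\mathcal{B}}$ on the internal side with the appropriate spaces of algebraic morphisms over $\mathcal{B}$.

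The main obstacle is this last reconstruction step $F_C(A) \simeq \mathcal{E}_{/p^{*}A}$: showing that an internal $\mathcal{B}$-topos is faithfully recorded by its global sections $\mathcal{E}$ and the structure morphism $p$, so that the internal notion of presentability-with-descent matches the external notion of a bounded geometric morphism. Controlling boundedness---the internal small generators---under $\Gamma_{\mathcal{B}}$ and its inverse is the delicate point. Since \Cref{thm4.11} is exactly \cite[Theorem 3.2.5.1]{Martini-Wolf-2}, I would ultimately either invoke that result directly or carry out this descent-reconstruction argument in full.
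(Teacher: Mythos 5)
The paper gives no proof of this statement at all: it is imported verbatim as \cite[Theorem 3.2.5.1]{Martini-Wolf-2}, which is exactly what you propose to fall back on in your final paragraph. Your sketch of the underlying argument (global sections as evaluation at the terminal object of $\mathcal{B}$, inverse functor $A \mapsto \mathcal{E}_{/p^{*}A}$, limit-preservation from descent in $\mathcal{E}$ plus cocontinuity of $p^{*}$, and the reconstruction $F_C(A) \simeq \mathcal{E}_{/p^{*}A}$ as the substantive step) is a faithful outline of how the cited result is actually proved, so there is nothing to fault here.
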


\begin{remark}\label{rmk4.13}
In \cite{Johnstone}, the morphisms in the 2-category of $\mathcal{B}$-topoi are defined to be geometric morphisms, as opposed to algebraic morphisms. Thus, $TOP_{\mathcal{B}}$ is an $\infty$-categorical analogue of the opposite of the 2-category of $\mathcal{B}$-topoi defined in \cite{Johnstone}.
\end{remark}

\section{A Lemma on Flat Functors} \label{lemma}

The purpose of this section is to construct a natural map $Fun(C^{\Delta^{1}}, \mathcal{S}) \rightarrow Fun(C, \mathcal{S})^{\Delta^{1}}$ and show that this (co)-restricts to an equivalence 
$$
Filt((C)^{\Delta^{1}}, \mathcal{S}) \simeq Filt((C), \mathcal{S})^{\Delta^{1}}.
$$
This is an special case of a theorem for filtered functors of $\mathcal{B}$-categories that will be necessary to prove the main result of the final section. 


\begin{definition}
We say that an $\infty$-category $C$ is filtered if for any finite simplicial set $K$, a diagram $K \rightarrow C$ can be extended to $K^{\triangleright} \rightarrow C$.
We say that a functor $F : A \rightarrow B$ of $\infty$-categories is \emph{filtered} if for any right fibration $B' \rightarrow B$ with $B'$ filtered the pullback $B' \times_{B} A$ is a filtered $\infty$-category. We will functor $Filt(C, D)$ for the full subcategory of $Fun(C, D)$ whose objects are filtered functors.
\end{definition}

\begin{theorem}\label{thm5.2}
There is a natural equivalence 
$$
Filt((C^{op})^{\Delta^{1}}, \mathcal{S}) \simeq Filt(C^{op}, \mathcal{S})^{\Delta^{1}}
$$
of functors natural in $\infty$-categories $C$
\end{theorem}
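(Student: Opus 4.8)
The plan is to recognize both sides as $\mathrm{Ind}$-completions and reduce to the statement that $\mathrm{Ind}$ commutes with the arrow functor $(-)^{\Delta^1}$. Recall the standard identification $Filt(\mathcal E^{op},\mathcal S)\simeq\mathrm{Ind}(\mathcal E)$: a functor $\mathcal E^{op}\to\mathcal S$ is filtered exactly when its category of elements (unstraightening) is filtered, equivalently when it is a filtered colimit of representables, which is the defining property of an object of $\mathrm{Ind}(\mathcal E)$. Combined with the canonical identification $(C^{op})^{\Delta^1}\simeq (C^{\Delta^1})^{op}$ coming from $(\Delta^1)^{op}\simeq\Delta^1$, the right-hand side becomes $Fun(\Delta^1,\mathrm{Ind}(C))=\mathrm{Ind}(C)^{\Delta^1}$ and the left-hand side becomes $\mathrm{Ind}(C^{\Delta^1})$. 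Thus the theorem is equivalent to a natural equivalence
\[ \mathrm{Ind}(C^{\Delta^1})\ \simeq\ \mathrm{Ind}(C)^{\Delta^1}, \]
and the comparison map announced in the section is its underlying functor.

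\textbf{The comparison functor.} Since $Fun(\Delta^1,\mathrm{Ind}(C))$ admits filtered colimits (computed pointwise), the universal property of $\mathrm{Ind}$ extends the functor $Fun(\Delta^1,y)\colon C^{\Delta^1}\to Fun(\Delta^1,\mathrm{Ind}(C))$ built from the Yoneda embedding $y$ to a unique filtered-colimit-preserving functor $\Phi\colon\mathrm{Ind}(C^{\Delta^1})\to\mathrm{Ind}(C)^{\Delta^1}$. Concretely, $\Phi$ is assembled from the two evaluations $\mathrm{ev}_0,\mathrm{ev}_1\colon C^{\Delta^1}\to C$ and the universal natural transformation $\mathrm{ev}_0\Rightarrow\mathrm{ev}_1$ (whose value at an object $\phi$ is $\phi$ itself): as $\mathrm{Ind}$ is a $2$-functor this induces $\mathrm{Ind}(\mathrm{ev}_0)\Rightarrow\mathrm{Ind}(\mathrm{ev}_1)$, and $\Phi(G)$ is the resulting arrow $\mathrm{Ind}(\mathrm{ev}_0)(G)\to\mathrm{Ind}(\mathrm{ev}_1)(G)$. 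On a representable this gives $[y(\mathrm{ev}_0\phi)\to y(\mathrm{ev}_1\phi)]$, which matches $Fun(\Delta^1,y)$; by uniqueness the two descriptions coincide. Because $\mathrm{Ind}(\mathrm{ev}_i)$ sends representables to representables and $\mathrm{Ind}(C)$ is closed under filtered colimits, $\Phi$ does carry filtered objects to arrows of filtered functors, which is the promised co-restriction to the filtered subcategories.

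\textbf{Equivalence.} To prove $\Phi$ is an equivalence I would check the three standard conditions under which a filtered-cocontinuous extension of $Fun(\Delta^1,y)$ is an equivalence. First, $Fun(\Delta^1,y)$ is fully faithful, since $y$ is and $Fun(\Delta^1,-)$ preserves fully faithful functors. Second, each object $[y(b)\to y(a)]$ in the image is compact in $Fun(\Delta^1,\mathrm{Ind}(C))$: the mapping space out of an arrow $f\colon X_0\to X_1$ into an arrow $g\colon Y_0\to Y_1$ is the pullback $\mathrm{Map}(X_0,Y_0)\times_{\mathrm{Map}(X_0,Y_1)}\mathrm{Map}(X_1,Y_1)$ of commuting squares, so with $X_0,X_1$ compact it is a finite limit of filtered-colimit-preserving functors, and filtered colimits commute with finite limits in $\mathcal S$. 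Third, these objects generate $Fun(\Delta^1,\mathrm{Ind}(C))$ under filtered colimits. Granting the third point, fully faithfulness of $\Phi$ follows from the first two and essential surjectivity from the third.

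\textbf{Main obstacle.} The crux is the third point, essential surjectivity: every morphism $\alpha\colon X\to Y$ of $\mathrm{Ind}$-objects of $C$ must be a filtered colimit of morphisms between representables. The strategy is to write $X=\mathrm{colim}_{i\in I}y(c_i)$ and $Y=\mathrm{colim}_{j\in J}y(d_j)$ over filtered $I,J$; since each $y(c_i)$ is compact, the restriction of $\alpha$ to $y(c_i)$ factors through some $y(d_{j(i)})$, and by Yoneda this factorization is induced by an honest arrow $c_i\to d_{j(i)}$ of $C$, that is by an object of $C^{\Delta^1}$. The real work is to assemble these local factorizations into a single filtered diagram in $C^{\Delta^1}$ whose colimit recovers $\alpha$ compatibly; this is precisely the statement that $Fun(\Delta^1,-)$ preserves compact generation with compact objects the arrows between compact objects, and the bookkeeping is governed, exactly as in the compactness argument above, by the commutation of filtered colimits with the finite limits computing mapping spaces in an arrow category. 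I expect this assembling step, rather than the formal parts, to be the technical heart of the proof. The minor point that $C$ need not be idempotent complete is absorbed because retracts already exist in $\mathrm{Ind}(C^{\Delta^1})$, and naturality in $C$ is automatic from the universal-property characterization of $\Phi$.
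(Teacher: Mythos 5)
Your reformulation is correct and is a genuinely different organization from the paper's: you recast both sides as $\mathrm{Ind}$-completions via $Filt(\mathcal E^{op},\mathcal S)\simeq\mathrm{Ind}(\mathcal E)$ and $(C^{op})^{\Delta^1}\simeq(C^{\Delta^1})^{op}$, and then invoke the recognition principle ``fully faithful image of compact objects that generates under filtered colimits.'' The first two of your three conditions are fine as stated (full faithfulness of $Fun(\Delta^1,y)$, and compactness of an arrow between representables via the pullback formula for mapping spaces in an arrow category together with commutation of filtered colimits with finite limits in $\mathcal S$), and your comparison functor agrees with the paper's $\mathbf{T}$, which is built as the left Kan extension along the evaluations $C^{d^i}:C^{\Delta^1}\to C$, i.e.\ exactly $\mathrm{Ind}(\mathrm{ev}_i)$ on representables. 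But the third condition --- that every morphism of ind-objects is a filtered colimit of morphisms between representables --- is not proven; you explicitly defer it, and it is not bookkeeping. The naive plan of factoring each $y(c_i)\to Y$ through some $y(d_{j(i)})$ produces choices that are neither functorial in $i$ nor coherent; assembling them into a single filtered diagram in $C^{\Delta^1}$ requires a cofinality argument over a category of factorization data, and in the $\infty$-categorical setting this carries an infinite tower of coherences. Since this ``level representation'' statement is essentially equivalent to the theorem itself, the proposal as written establishes the formal frame but not the content.

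For comparison, the paper does not use a recognition principle: it writes down an explicit candidate inverse $\mathfrak S$, sending a transformation $\alpha:U_0\to U_1$ to the functor $f\mapsto U_0(\mathrm{dom}(f))\times_{U_1(\mathrm{dom}(f))}U_1(\mathrm{codom}(f))$, and then verifies that the unit and counit are equivalences. The verification is reduced, via the Yoneda-type criterion that a map of space-valued functors is an equivalence iff $\pi_0$ of all its finite powers $(-)^K$ is a bijection, to a $1$-categorical statement about filtered functors of sets (\cref{lemxxx}), which is then handled by the classical argument of \cite[Proposition B.4.1.2]{Johnstone}. This route buys an explicit inverse and lets the hard combinatorics be discharged at the level of $\pi_0$; your route is cleaner conceptually but concentrates all the difficulty in the generation statement. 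Notably, the paper itself appeals at one point to ``every map of ind objects has a level representation,'' flagged there as needing a reference --- this is precisely the fact your argument hinges on, so if you want to complete your proof you should either supply that cofinality argument in full or cite a source for $\mathrm{Ind}(Fun(K,C))\simeq Fun(K,\mathrm{Ind}(C))$ for finite $K$.
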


\begin{lemma}\label{lemxxx}
Suppose that  $S : C^{op}\rightarrow \mathcal{S}$ is a filtered functor of $\infty$-categories. Then the following are true for $R = \pi_{0}S^{K}$:

\begin{enumerate}
\item{There exists some $c \in C$ such that $S(c)$ is nonempty.}
\item{For any $x_{i} \in R(c_{i}), i = 1, 2$, there exists morphisms $f_{i} :  e \rightarrow c_{i},  i = 1, 2$ and $x_{3} \in R(e)$ such that $R(f_{i})(x_{3}) = x_{i} \, \, i = 1, 2$.}
\item{Suppose that we have morphisms $f_{i} : d \rightarrow c$ in $C$ and $x \in X(d)$. Then there exists a $g : e \rightarrow d$ and $y \in X(e)$ such that $g \circ f_{1} = g \circ f_{2}$ and $g(y) = x$. }
\end{enumerate}
In particular, $\pi_{0}(S)$ is a filtered functor of sets in the sense of \cite{Johnstone}.
\end{lemma}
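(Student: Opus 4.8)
The plan is to deduce all three clauses from the single statement that the $\infty$-category of elements $\int S$ is filtered, and then to read off (1)--(3) by probing the extension property of filtered $\infty$-categories with three small finite diagrams and applying $\pi_0$. So the first step is to reformulate the hypothesis: unwinding what it means for a functor valued in spaces to be filtered, $S$ is filtered exactly when its unstraightening $\int S \to C^{op}$ (the left fibration with fibre $S(c)$ over $c$, whose objects are pairs $(c,x)$ with $x$ a point of $S(c)$) has filtered total space; equivalently, $S$ is a filtered colimit of representables. This is the $\infty$-categorical form of the classical fact that a functor into $\mathbf{Set}$ is filtering iff its category of elements is filtered, and it lets me argue entirely inside $\int S$, using the projection $\int S \to C^{op}$ and $\pi_0$ on fibres to convert homotopy-coherent cocones into the set-level data demanded by Johnstone's conditions.

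The second step is to extract each clause from a specific finite diagram. For (1) I would extend the empty diagram $\emptyset \to \int S$ to a cocone on $\emptyset^{\triangleright}=\Delta^0$; this is merely an object $(c,x)$, so $R(c)=\pi_0 S(c)$ is inhabited. For (2) I would represent the classes $x_1,x_2$ by objects $(c_1,\tilde x_1),(c_2,\tilde x_2)$, extend the two-point diagram $\Delta^0\sqcup\Delta^0\to\int S$ to a cocone on $(\Delta^0\sqcup\Delta^0)^{\triangleright}$, and project its two legs to $C^{op}$: passing to $\pi_0$ produces the common object $e$, the morphisms $f_i\colon e\to c_i$, and the class $x_3\in R(e)$ amalgamating $x_1,x_2$. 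For (3) I would encode the parallel pair together with the chosen element as a map out of the finite simplicial set $\Delta^1\sqcup_{\partial\Delta^1}\Delta^1$ and extend it to a cocone; the new leg supplies $g$, while the filling $2$-simplices of the cocone exhibit a homotopy between the two composites, which $\pi_0$ turns into the strict equality $f_1 g=f_2 g$, and transporting the element along $g$ yields $y$. Since (1)--(3) are exactly the defining clauses of a filtering functor of sets, the final \emph{in particular} assertion is then immediate.

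The main obstacle is the first step together with the bookkeeping it forces downstream: I must pin down the correct reading of ``$S$ filtered'' so that it becomes filteredness of $\int S$ (handling the variance introduced by $C^{op}$ and by the slice/unstraightening conventions), and then make sure that passing to $\pi_0$ genuinely yields the stated equations rather than mere homotopies. Clause (3) is the delicate one, since there a $2$-simplex of the cocone is what degenerates to the equality $f_1 g=f_2 g$; I would also double-check that the element-tracking maps $R(f_i)$ and $R(g)$ point in the directions required by the statement, correcting the evident transcription of the variances where necessary.
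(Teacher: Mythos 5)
Your overall strategy---pass to the unstraightening $D=\int S\to C$, observe that $D$ is filtered because $S$ is, extend finite diagrams in $D$ to cocones, and apply $\pi_{0}$---is exactly the strategy of the paper's proof. But there is a genuine gap: the lemma is stated for $R=\pi_{0}S^{K}$ with $K$ an arbitrary finite simplicial set, whereas the diagrams you propose to probe with ($\emptyset$, $\Delta^{0}\sqcup\Delta^{0}$, and $\Delta^{1}\sqcup_{\partial\Delta^{1}}\Delta^{1}$) only establish the case $K=\Delta^{0}$, i.e.\ the final ``in particular'' clause about $\pi_{0}S$. An element of $R(c)=\pi_{0}\bigl(S(c)^{K}\bigr)$ is a homotopy class of maps $K\to S(c)$, hence a $K$-shaped diagram in the fibre of $D\to C$ over $c$; so for clause (2) one must extend the diagram $K\sqcup K\to D$ (as the paper does), and similarly a $K$-thickened coequalizer diagram for clause (3). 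You cannot sidestep this by rerunning your argument with $S$ replaced by $S^{K}$, because $S^{K}$ is not itself a filtered functor in general: pointwise $K$-powers of representables are not representable, so the ``filtered colimit of representables'' characterization does not transfer.

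A second, smaller gap is in how you extract the equation $R(f_{i})(x_{3})=x_{i}$. Once the probing diagram is $K\sqcup K$, a leg of the cocone is a homotopy $K\times\Delta^{1}\to D$ whose projection to $C$ need not be constant in the $K$-direction, so ``projecting the legs and passing to $\pi_{0}$'' does not immediately produce a single morphism $f_{i}\colon e\to c_{i}$ together with the identity $R(f_{i})(x_{3})=x_{i}$. The paper handles this by a marked-anodyne rectification (HTT Lemma~3.2.1.9) that replaces the homotopy by one whose edges $\{a\}\times\Delta^{1}$ are cartesian, followed by a cartesian lift (HTT Proposition~3.1.2.1) identifying the transported class with $x_{i}$ inside a fibre of $D^{K}\to C$. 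Your sketch should incorporate both the $K$-thickening of the probing diagrams and this rectification step to actually prove the statement as written.
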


\begin{proof}

Consider the unstraightening  $\mathbf{Un}(S) : D \rightarrow C$ of $C \rightarrow \mathcal{S}$. Then $D$ is filtered, since $\mathbf{Un}(S)$ is the pullback of $S$ along the universal right fibration (see \cite[Section 3.3]{LurieHTT}).

Since $D$ is filtered, this means that some fiber of $D \rightarrow C$ is inhabited, so that by the properties of (un)straightening adjunction there exists some $c \in C$ such that $S(c) \neq \emptyset$. Thus, $S(c)^{K} \neq \emptyset$ and we have proven the first point. 

For the second point, suppose that we choose 
$$
 (x_{1}, x_{2}) : K \coprod K \rightarrow D
$$
Then we can find an extension 
$$
(K \coprod K)^{\triangleright} \rightarrow D 
$$
which determines a homotopy 
$$
H : (K \coprod K ) \times \Delta^{1} \rightarrow D
$$
to a constant diagram $(e, e) : K \coprod K \rightarrow D$. 

By \cite[Lemma 3.2.1.9]{LurieHTT}, we can find a lift in the diagram of marked simplicial sets
$$
\xymatrix
{
(K \coprod K) \times \{ 1\})_{\flat} \ar[rr]^>>>>>>>>>{(e, e)} \ar[d] && (D, D_{cart}) \ar[d] \\
( (K \coprod K) \times \Delta^{1}, E) \ar[rr]_>>>>>>>>{Un(S) 
\circ H } \ar@{.>}[urr]_{\phi} && S^{\sharp} 
}
$$
where the cartesian edges are marked in $D$, and where $E$ is the union of the degenerate edges and the edges of the form $\{a \} \times \Delta^{1}$.

Now, choose equivalences $\phi_{i}: S(c_{i}) \simeq \mathbf{Un}(S)^{-1}(c_{i})$, $(y_{1}, y_{2}) = \phi|_{(K \coprod K) 
\times \{ 0\}}$. Let $f_{i} = Un(S) \circ H|_{(K \coprod K) \times \{ i\}} $. By the definition of unstraightening, we have $S(f_{i})(e)$ is pointwise equivalent in $S(c_{i})^{K}$ to $ y_{i}$. In particular, \cite[Corollary 5.1.2.3]{LurieHTT} implies that $\pi_{0} S^{K}(f_{i})(e_{i}) = y_{i}$.

Thus, it remains to show that $x_{i}$ and $y_{i}$ are equivalent in a fiber of $Un(S)^{K}$.
It suffices to find a lift in the diagram
$$
\xymatrix
{
\partial \Delta^{1} \ar[rr]_{(x_{i}, y_{i})} \ar[d] && D^{K} \ar[d] \\
\Delta^{1} \ar[rr]_{id_{(S)^{K} \circ x_{i}}} \ar@{.>}[urr] && (S)^{K} .
}
$$
But it follows from \cite[Proposition 3.1.2.1]{LurieHTT} that the right hand vertical map is a cartesian fibration, so we can find a cartesian lift. 

The argument for the third point is similar to the second. 

\end{proof}

\begin{lemma}\label{lem5.4} (\cite[Corollary 4.2]{Haugseng})
There is an isomorphism 
$$
\int^{f} Hom_{C}(-, dom(f)) \times F(f)  \cong F(-)
$$
natural in $F \in Fun(C^{op}, D)$.
\end{lemma}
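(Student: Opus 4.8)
The plan is to read \Cref{lem5.4} as the co-Yoneda (density) formula for $D$-valued presheaves, the coend being computed---following \cite{Haugseng}---as a colimit over the twisted arrow $\infty$-category $\mathrm{Tw}(C)$ via its projection to $C^{op}\times C$. Writing $c'$ for the coend variable, splitting the two occurrences of $c'$ gives a bifunctor $C^{op}\times C\to D$, $(x,y)\mapsto \Hom_C(-,y)\times F(x)$, of which the integrand $\Hom_C(-,c')\times F(c')$ is the diagonal restriction; here $\times$ denotes the copower of $D$ over $\mathcal{S}$, which exists since $D$ admits the relevant colimits, and per \cite{Haugseng} the coend is the colimit of this bifunctor over $\mathrm{Tw}(C)$. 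There is a canonical comparison map, natural in $F$,
\[
\epsilon_F\colon \int^{c'}\Hom_C(-,c')\times F(c')\longrightarrow F(-),
\]
induced on the component indexed by $g\colon c\to c'$ by the structure map $F(g)\colon F(c')\to F(c)$. To show $\epsilon_F$ is an equivalence of presheaves it suffices to check it objectwise at each $c\in C$, and naturality in $F$ is then immediate from the defining formula.

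To verify this I would reduce the $D$-valued statement to the $\mathcal{S}$-valued one. The family $\{\Hom_D(-,d)\}_{d\in D}$ is jointly conservative by the Yoneda lemma, and each $\Hom_D(-,d)$ sends colimits in $D$ to limits in $\mathcal{S}$, hence carries the coend to an end. Using the copower--hom adjunction $\Hom_D(S\times X,d)\simeq \Hom_{\mathcal{S}}(S,\Hom_D(X,d))$, applying $\Hom_D(-,d)$ to $\epsilon_F$ at a fixed object $c$ produces the map
\[
G(c)\longrightarrow \int_{c'}\Hom_{\mathcal{S}}\bigl(\Hom_C(c,c'),\,G(c')\bigr),\qquad G:=\Hom_D(F(-),d)\colon C\to\mathcal{S}.
\]
This is precisely the comparison map of the end-form of the $\infty$-categorical Yoneda lemma, $G(c)\simeq \mathrm{Nat}(\Hom_C(c,-),G)$, which is an equivalence. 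As $d$ ranges over $D$ and the family is jointly conservative, this forces $\epsilon_F$ to be an equivalence, completing the reduction.

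The genuine content---and the step I expect to be the main obstacle---is the $\mathcal{S}$-valued end-form Yoneda lemma itself. In the twisted-arrow model the end $\int_{c'}\Hom_{\mathcal{S}}(\Hom_C(c,c'),G(c'))$ is a limit over $\mathrm{Tw}(C)^{op}$, and the assertion that the projection to $G(c)$ is an equivalence is a coinitiality statement: the inclusion selecting the identity $\mathrm{id}_c$ should be coinitial, equivalently the relevant slice of the twisted arrow category is weakly contractible because an identity arrow supplies an initial object. Establishing this contractibility is the real work, while everything else is formal bookkeeping of variances and of the copower structure. Since the identification is exactly \cite[Corollary 4.2]{Haugseng}, in the write-up I would either invoke it directly or reproduce this cofinality computation within the twisted-arrow presentation of (co)ends of \cite{Haugseng}.
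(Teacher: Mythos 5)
The paper offers no proof of \cref{lem5.4}: the statement is imported wholesale as \cite[Corollary 4.2]{Haugseng}, so there is no in-paper argument to compare yours against. What you have written is, in outline, the standard proof of that cited result, and it is a sensible one: identify the coend with a colimit over the twisted arrow $\infty$-category, build the comparison map $\epsilon_F$ from the structure maps of $F$, and reduce to the space-valued case by applying the jointly conservative, colimit-to-limit functors $\Hom_D(-,d)$ together with the copower--hom adjunction, landing on the end-form Yoneda lemma. Since you close by saying you would cite \cite[Corollary 4.2]{Haugseng} for the remaining step, your write-up is compatible with (indeed slightly more informative than) what the paper actually does.

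One caveat on the step you identify as ``the real work.'' The end-form Yoneda lemma $G(c)\simeq \mathrm{Nat}(\Hom_C(c,-),G)$ is not quite a coinitiality statement about the inclusion of $\mathrm{id}_c$ into the twisted arrow category: if that inclusion were coinitial, the limit would compute to the \emph{integrand} at $\mathrm{id}_c$, namely $\Hom_{\mathcal{S}}(\Hom_C(c,c),G(c))$, which is not $G(c)$; one still needs evaluation at the identity, and the usual $\infty$-categorical proofs proceed via the universal left fibration or a unit/counit retraction rather than a bare contractibility-of-a-slice argument. This does not break your proof, since you defer exactly this step to the reference, but the parenthetical justification (``an identity arrow supplies an initial object'') should not be presented as the actual mechanism. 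A second, minor point: the lemma as printed writes $F(f)$ for $F\in Fun(C^{op},D)$, which only typechecks after the reading you adopt (the coend variable ranges over the twisted arrow category and $F$ is evaluated at an endpoint of $f$); it is worth stating that convention explicitly, since the neighbouring \cref{con5.5} uses the same notation with $U$ genuinely defined on $C^{\Delta^1}$.
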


\begin{construction}\label{con5.5}
In this construction, we are going to define for an $\infty$-category $C$, a natural map $Fun(C^{\Delta^{1}}, D) \rightarrow Fun(C, D)^{\Delta^{1}}$. 
Consider the functors $dom, codom: C^{\Delta^{1}} \rightarrow C$ given by evaluation at $0, 1$. Then there is a functor 
$$
\mathbf{T} : Fun(C^{\Delta^{1}}, D) \rightarrow Fun(C, D)^{\Delta^{1}}
$$
which takes $U $ a natural transformation $\mathbf{T}(U) := \mathbf{T}_{0}(U) \rightarrow \mathbf{T}_{1}(U)$ given by the formula: 
$$
\mathbf{T}_{0}(U) = \int^{f} Hom_{C}(-, dom(f)) \times U(f) \rightarrow \int^{f} Hom_{C}(-, codom(f)) \times U(f) = \mathbf{T}_{1}(U)
$$
where the natural transformation is induced by the evaluation map $ev : \Delta^{1} \times C^{\Delta^{1}} \rightarrow C$. 

By \cite[Theorem 5.1]{Gepner-Haugseng-Nikolaus}, $\mathbf{T}_{i}(U)$ can be described as the left Kan extension of $U$ along $ C^{\Delta^{1}} \xrightarrow{C^{d^{i}}} C^{\Delta^{0}}$. 

\end{construction}

\begin{lemma}\label{lemxx}
suppose that $F : I \rightarrow Cat_{\infty}$ is a diagram. Let $I_{\neq \emptyset}$ be the full subcategory of $I$ consisting of objects $i$ such that $F(i) \neq \emptyset$. The $colim F = colim F|_{I_{\neq \emptyset}}$
\end{lemma}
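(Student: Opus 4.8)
The plan is to compute both colimits via the cocartesian unstraightening (Grothendieck construction) and to observe that the two total $\infty$-categories involved are literally the same simplicial set, so that the colimits, being computed from that total $\infty$-category together with its cocartesian edges, must agree.

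First I would recall the standard description of a colimit of a $Cat_{\infty}$-valued diagram: for $F : I \to Cat_{\infty}$, writing $\mathrm{Un}(F) \xrightarrow{p} I$ for the associated cocartesian fibration, there is a natural equivalence $\mathrm{colim}_{I} F \simeq \mathrm{Un}(F)[W^{-1}]$, where $W$ is the class of $p$-cocartesian edges (this is the cocartesian refinement of the groupoidification formula for $\mathcal{S}$-valued functors; see \cite{Gepner-Haugseng-Nikolaus}). I would apply this both to $F$ and to its restriction $F|_{I_{\neq\emptyset}}$ along the inclusion $j : I_{\neq\emptyset} \hookrightarrow I$.

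The key step is the claim that $\mathrm{Un}(F)$ and $\mathrm{Un}(F|_{I_{\neq\emptyset}})$ coincide as simplicial sets. The vertices of $\mathrm{Un}(F)$ lying over $i \in I$ are exactly the objects of the fiber $F(i)$, so when $F(i) = \emptyset$ there are none. Since $I_{\neq\emptyset} \subseteq I$ is the full subcategory on those $i$ with $F(i) \neq \emptyset$, and since every $n$-simplex of $\mathrm{Un}(F)$ has all of its vertices lying over the corresponding vertices of its image in $I$, any simplex of $\mathrm{Un}(F)$ projects under $p$ to a simplex of $I$ whose vertices all lie in $I_{\neq\emptyset}$, hence factors through the full subcategory $I_{\neq\emptyset}$. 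Therefore $\mathrm{Un}(F) = p^{-1}(I_{\neq\emptyset})$, and since unstraightening commutes with pullback along $j$, this is precisely $\mathrm{Un}(F|_{I_{\neq\emptyset}})$. Under this identification the two classes of cocartesian edges coincide, because cocartesianness is detected by the fibration $p$, which restricts to the fibration classifying $F|_{I_{\neq\emptyset}}$; hence the localizing data agree and $\mathrm{colim}_{I} F \simeq \mathrm{Un}(F)[W^{-1}] = \mathrm{Un}(F|_{I_{\neq\emptyset}})[W^{-1}] \simeq \mathrm{colim}_{I_{\neq\emptyset}} F|_{I_{\neq\emptyset}}$. The degenerate case $I_{\neq\emptyset} = \emptyset$ is consistent: then $\mathrm{Un}(F) = \emptyset$ and both sides are the initial $\infty$-category.

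I expect the main obstacle to be the bookkeeping in the key step, namely verifying at the level of \emph{all} simplices (not merely objects and $1$-morphisms) that nothing in $\mathrm{Un}(F)$ sits over the complement of $I_{\neq\emptyset}$, together with invoking the colimit-as-localization formula in a sufficiently model-independent way. Once the total $\infty$-categories and their cocartesian edges are identified, the remainder is formal.
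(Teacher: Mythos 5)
Your proof is correct, but it takes a genuinely different route from the paper. The paper's proof invokes \cite[Theorem 4.2.4.1]{LurieHTT} to replace the quasi-categorical colimit by a homotopy colimit of a diagram of simplicial sets in the Joyal model structure, observes that a projective cofibrant replacement $G'$ of $G$ still satisfies $G'(x)=\emptyset$ whenever $G(x)=\emptyset$ (which holds because any arrow $y\to x$ with $G(y)\neq\emptyset$ forces $G(x)\neq\emptyset$, so the cellular/bar-type replacement contributes nothing over such $x$), and then reduces to the evident statement for strict colimits of simplicial sets. You instead compute both colimits as localizations of the cocartesian unstraightening at the cocartesian edges and note that $\mathrm{Un}(F)$ already lives entirely over the full subcategory $I_{\neq\emptyset}$, since every simplex of the total category has all its vertices over objects with nonempty fibers. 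Your approach is more model-independent and sidesteps the point the paper's sketch glosses over, namely that restriction to a full subcategory does not obviously interact well with projective cofibrant replacement; the cost is that you must invoke the colimit-as-localization formula for $Cat_\infty$-valued diagrams and the compatibility of unstraightening with base change, which is heavier machinery than the paper otherwise uses in this lemma (though consistent with its reliance on \cite{Gepner-Haugseng-Nikolaus} elsewhere). Both arguments are sound; the identification $\mathrm{Un}(F)=p^{-1}(I_{\neq\emptyset})$ is an honest equality of simplicial sets exactly as you argue, and the cocartesian edges are preserved and reflected under pullback along the full inclusion, so the localizing data agree.
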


\begin{proof}
To see this, we use \cite[Theorem 4.2.4.1]{LurieHTT} to reduce to proving the corresponding statement for homotopy colimits of diagrams $G : J \rightarrow sSet$ in the quasi-category model structure. If $G(x) = \emptyset$, then $G'(x) = \emptyset$  for the projective cofibrant replacement $G'$ of $G$. Thus, we reduce to the case of ordinary colimits of simplicial sets, in which case the statement is obvious. 
\end{proof}

\begin{proof}[Proof of Theorem 4.2]
First we show that the map \cref{con5.5} (co)restricts to a functor $\mathbb{T}: Filt((C^{op})^{\Delta^{1}}, \mathcal{S}) \rightarrow Filt(C^{op}, \mathcal{S})^{\Delta^{1}}$. 

By the description of filtered functors $D \rightarrow \mathcal{S}$ from \cite[Section 5.3.5]{LurieHTT}, it suffices to show that $\mathbf{T}_{i}$ preserves the property of being a filtered colimit of representable functors. Because left Kan extensions preserve filtered colimits, we want to show that $\mathbf{T}_{i}$ takes representables to filtered colimits of representables. By the pointwise formula for Kan extensions (\cite[Proposition 6.4.9]{Cisinski}), we have
$$T_{i}(Hom_{C^{\Delta^{1}}}(-, y))(x) = colim_{C^{d^{i}}(z) = x} Hom_{C^{\Delta^{1}}}(x, z)) = Hom_{C^{\Delta^{1}}}(x \circ C^{s^{i}}, y)$$
since $x \circ C^{s^{i}}$ is the terminal object of $(C^{d^{i}})^{-1}(x) $.

We will produce a map  $\mathfrak{S} : Fun(C, \mathcal{S})^{\Delta^{1}} \rightarrow Fun(C^{\Delta^{1}}, \mathcal{S})$ which we will show corestricts to an inverse of $\mathbb{T}$. To do this we will closely follow the argument of \cite[Lemma IV.1.2]{Johnstone}, by producing the (co)units of the equivalences.

 Given $\alpha : U_{0} \rightarrow U_{1}$, we define $\mathfrak{S}(\alpha)$ by the pullback:

$$
\xymatrix
{
\mathfrak{S}(\alpha)(f) \ar[r]_{p_{f}^{2}} \ar[d]_{p_{f}^{1}} & U_{1}(codom(f)) \ar[d]^{T_{1}(f)} \\
U_{0}(dom(f)) \ar[r]_{\alpha_{dom(f)}} & U_{1}(dom(f))
}
$$ 

We want to show that $\mathfrak{S}$ corestricts to a functor 
$$
\mathbb{S} : Filt(C^{op}, \mathcal{S})^{\Delta^{1}} \rightarrow Filt((C^{op})^{\Delta^{1}},\mathcal{S})
$$

By the characterization of filtered functors from \cite[Section 5.3.5]{LurieHTT}, it suffices to show that $\mathfrak{S}$ takes transformations of filtered colimits of representables to filtered colimits of representables. Since every map of ind objects has a level representation, we reduce to showing that $\mathfrak{S}$ sends diagrams of representables to representables. 

Consider a natural transformation $\alpha: Hom_{C}(-, x) \xrightarrow{Hom_{C}(-, q)} Hom_{C}(-, y)$.
For a morphism $f: z \rightarrow w$ in $C^{\Delta^{1}}$, the description of mapping spaces in functor categories as ends from \cite{Haugseng} implies that we can describe $Map_{C^{\Delta^{1}}}(f, q)$ as the limit of
$$
\Delta_{/ \Delta^{1}} \rightarrow \Delta^{1} \times (\Delta^{1})^{op} \xrightarrow{(f, q^{op})} C \times C^{op} \xrightarrow{Map_{C}(-, -)} \mathcal{S}
$$
which, unwinding the definitions is exactly the pullback defining $\mathfrak{S}(\alpha)(f)$.

 Given $U \in Fun((C^{op})^{\Delta^{1}}, \mathcal{S})$, we have a natural transformation 
$$
 \Phi : U(-) \cong \int^{f} Hom_{C^{\Delta^{1}}}(-, f) \times U(f) \rightarrow  \int^{f} Hom_{C}(-, dom(f)) \times U(f) = \mathbf{T}_{0}(U)(-)
$$
By the proof of \cref{lem5.4}, this can be identified with a map $\Phi$ which takes $z_{1} \in U(f)$ to the element of the coend represented by $(id_{dom(f)}, z_{1})$. Similarly, we can construct a map $\Psi : U \rightarrow T_{1}(U)$  which takes $z_{1} \in U(f)$ to the element of the coend represented by $(id_{dom(f)}, z_{1})$.
We let the $\epsilon_{U} : U \rightarrow \mathfrak{S} \mathbf{T}U$ be the natural transformation given pointwise by the universal property of pullback in the commutative diagram:
$$
\xymatrix
{ 
U(f) \ar[drr]^{\Psi}  \ar[ddr]_{\Phi}  \ar@{.>}[dr]^>>>>>{\epsilon_{U}(f)} & & \\
& \ar[r] \ar[d] \mathfrak{S}\mathbf{T}(U)(f) & \ar[d]  \mathbf{T}_{1}(U)(codom(f)) \\
& \mathbf{T}_{0}(U)(dom(f))  \ar[r] & \mathbf{T}_{1}(U)(dom(f))
}
$$ 

We want to show that $\epsilon_{U}$ is an equivalence for $U \in Filt((C^{op})^{\Delta^{1}}, \mathcal{S})$. 
By the $\infty$-categorical Yoneda lemma, it suffices to show that 
$$
\pi_{0} \epsilon_{U}^{K}
$$
is an equivalence for each finite simplicial set $K$. 
Since we have equalities 
$$
\epsilon_{U}^{K} = \epsilon_{U^{K}}
$$
and $\pi_{0}U^{K}$ is filtered by \cref{lemxxx}, it suffices to prove that $\pi_{0}  \epsilon_{U}$ is an equivalence. Since $\epsilon'$ is constructed identically to the counit of the equivalence from \cite[Proposition B.4.1.2]{Johnstone}, we can repeat the argument there and use \cref{lemxxx}. to show that $\mathbf{\epsilon}'$ is a bijection.
\\

On the other hand, let us now produce natural isomorphisms $\eta_{i} : \mathbb{T}\mathbb{S}(U)_{i} \cong U_{i}$.

First, we will show that there is a natural equivalence $\mathbf{T}\mathfrak{S}(-)_{0} \rightarrow (-)_{0}$, which corestricts to an appropriate equivalence.

Let
$
p: Fun(\Delta^{1}, C^{op})_{/id_{C^{op}}} \rightarrow Fun(\Delta^{1},C)
$
be the projection map.
Then we can identify $\mathfrak{S}\mathbf{T}(U)_{0}(c)$ with the coend
$$
 \int^{f} Hom_{C}(c, dom(p(f))) \times \mathfrak{S}\mathbf{T}(U)(p(f))  \simeq \mathfrak{S}\mathbf{T}(U)(id_{c}) \times hom(c, c) \simeq  \mathfrak{S}(U)(id_{c}) = \mathbf{T}_{0}(U)(c)
$$
where the first two isomorphisms follow from the fact that $Fun(\Delta^{1}, C)_{/id_{c}},  hom(c, c)$ each have a terminal object ($id_c$) as well as \cref{lemxx}, and the final isomorphism is immediate from the definition of $\mathfrak{S}$.

 Let $\alpha : U_{0} \rightarrow U_{1}$ be a natural transformation of functors $C \rightarrow D$. 
We will define $\eta_{1} : \mathbb{T}\mathbb{S}(\alpha)_{1} \rightarrow (\alpha)_{1}$ be the formula:
\begin{multline*}
\int^{f} (id, \alpha_{dom(f)} p^{1}_{\alpha}) : \mathbb{T}\mathbb{S}(U)_{1}(c) = \int^{f \in C^{\Delta^{1}}} Hom_{C}(c, codom(f)) \times \mathbb{S}(\alpha)(f) \\
\longrightarrow  \int^{f \in C^{\Delta^{1}}} Hom_{C}(c, dom(f)) \times U_{1}(f)  \rightarrow U_{1}(c)
\end{multline*}

where last horizontal arrow is the equivalence of \cref{lem5.4}. Using \cref{lem5.4}, the map can be identified with the map given by 
\begin{equation}\label{helper}
((x, y) , g) \mapsto U_{1}(g)(y)
\end{equation} 

Since we have identifications 
$$
(\eta_1)_{X^{K}} = ((\eta_{1})_{X})^{K}
$$
we can reduce as before to showing that $
\pi_{0}  \eta_{1}$ is an pointwise bijection.  But since this unit component is constructed almost identically to the component of the unit of the equivalence from \cite[Proposition B.4.1.2]{Johnstone}, we can show $\pi_{0}\eta_{1}$ is an equivalence usng
  \cref{lemxxx} , \cref{helper}, as well as the argument from \cite[Proposition B.4.1.2]{Johnstone}.
  
  \end{proof}

\section{The $\infty$-Bicategory of Bounded Topoi}
\label{secboundedtopoi}
In this section, we are going to prove the $\infty$-categorical analogue of the main theorem of \cite[Section B.4.1]{Johnstone}.

\begin{theorem}\label{thm6.1}
Suppose that $\mathcal{B}$ is an $\infty-topos$. Then
$BTOP_{\mathcal{B}}$ has finite weighted colimits. 
\end{theorem}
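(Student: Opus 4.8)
The plan is to deduce the statement from \Cref{cor3.7} applied in the opposite $(\infty,2)$-category. Since weighted colimits in $BTOP_{\mathcal{B}}$ are weighted limits in its opposite $TOP_{\mathcal{B}}$ (reversing the direction of $1$-morphisms, so that geometric morphisms become algebraic ones, and copowers with $\Delta^1$ become powers with $\Delta^1$), it suffices by \Cref{cor3.7} to verify two things about $TOP_{\mathcal{B}}$: that its underlying $\infty$-category $Top_{\mathcal{B}} = TOP_{\mathcal{B}}^{th}$ has all finite limits, and that $TOP_{\mathcal{B}}$ admits a powering $(-)^{\Delta^1}$. \Cref{thm3.6} together with \Cref{cor3.7} then assembles all finite weighted limits out of these two pieces.

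The first ingredient is handled by \Cref{thm4.11}. That equivalence identifies $Top_{\mathcal{B}}$ with $Top_{/\mathcal{B}}^{alg}$ via global sections, and the existence of all small limits in the $\infty$-category of $\infty$-topoi is due to Lurie \cite{LurieHTT}; transporting these along the equivalence of \Cref{thm4.11} and checking that they remain within bounded topoi over $\mathcal{B}$ gives finite limits in $Top_{\mathcal{B}}$. Once \Cref{thm4.11} is granted, this step is essentially bookkeeping.

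The heart of the argument is the powering $\mathcal{X}^{\Delta^1}$, whose universal property is the natural-in-$\mathcal{Y}$ equivalence of mapping $\infty$-categories $Fun^{alg}_{\mathcal{B}}(\mathcal{Y}, \mathcal{X}^{\Delta^1}) \simeq Fun^{alg}_{\mathcal{B}}(\mathcal{Y}, \mathcal{X})^{\Delta^1}$. I would first establish this for presheaf topoi $\mathcal{X} = \underline{Psh}_{\mathcal{B}}(C)$. There algebraic morphisms into a presheaf topos are classified by flat (filtered) functors, so the required identification is precisely the flat-functor equivalence $Filt((C^{op})^{\Delta^1}, \mathcal{S}) \simeq Filt(C^{op}, \mathcal{S})^{\Delta^1}$ of \Cref{thm5.2} in its $\mathcal{B}$-internal form; this exhibits $\underline{Psh}_{\mathcal{B}}(C)^{\Delta^1} \simeq \underline{Psh}_{\mathcal{B}}(C^{\Delta^1})$. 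For a general bounded $\mathcal{B}$-topos, which is a left-exact localization of such a presheaf topos, I would then descend the power along the localization, verifying that the arrow construction respects the locality conditions carving $\mathcal{X}$ out of $\underline{Psh}_{\mathcal{B}}(C)$ and so produces a bounded $\mathcal{B}$-topos again.

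The main obstacle is this last step, in two parts. First, one must upgrade \Cref{thm5.2} from the space-valued statement for ordinary $\infty$-categories to the genuinely $\mathcal{B}$-internal version for $\mathcal{B}$-categories, which is why the preceding section advertises it as a special case of a more general result. Second, one must control the descent along left-exact localizations so that $\mathcal{X}^{\Delta^1}$ has the correct universal property, not merely on underlying $\infty$-categories but as an $(\infty,2)$-categorical cotensor; here \Cref{lem3.4} is exactly the tool that promotes the underlying equivalence of $\infty$-groupoids to an equivalence of mapping $\infty$-categories, and maintaining naturality in $\mathcal{Y}$ throughout is the recurring technical burden.
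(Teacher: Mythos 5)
Your overall architecture is the right one and is essentially the paper's: reduce via \Cref{cor3.7} (or its dual) to conical finite (co)limits in the underlying $\infty$-category plus a $\Delta^1$-(co)power, obtain the conical part from known (co)completeness results for topoi, obtain the arrow part for presheaf topoi from Diaconescu's theorem combined with \Cref{thm5.2}, and then pass to general bounded topoi through an accessible left-exact localization. (For that last step the paper forms a pushout of $\underline{Psh}_{\mathcal{B}}(C)\otimes\Delta^1$ along $\underline{Psh}_{\mathcal{B}}(C)\times \underline{Psh}_{\mathcal{B}}(C)\to\mathcal{X}\times\mathcal{X}$ in $Top_{\mathcal{B}}$ rather than checking that the arrow construction descends through the locality conditions, and it quotes the needed conical (co)completeness of $Top_{\mathcal{B}}$ from Martini--Wolf rather than transporting Lurie's limits along \Cref{thm4.11}; these are minor divergences.)

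There is, however, a genuine variance error that would derail the execution. Throughout \Cref{secboundedtopoi} the $1$-morphisms of $TOP_{\mathcal{B}}$ are the \emph{algebraic} morphisms (\Cref{def4.9}, \Cref{rmk4.13}), so Johnstone-style weighted limits along geometric morphisms are weighted \emph{colimits} in $TOP_{\mathcal{B}}$, and the reduction actually used is the \emph{dual} of \Cref{cor3.7}: one needs finite conical colimits in $Top_{\mathcal{B}}$ and a \emph{tensoring} $(-)\otimes\Delta^1$. Your proposal instead asks for finite limits and a \emph{powering} $(-)^{\Delta^1}$, and writes the universal property as $Fun^{alg}_{\mathcal{B}}(\mathcal{Y},\mathcal{X}^{\Delta^1})\simeq Fun^{alg}_{\mathcal{B}}(\mathcal{Y},\mathcal{X})^{\Delta^1}$. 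But Diaconescu's theorem classifies algebraic morphisms \emph{out of} a presheaf topos, so what \Cref{thm5.2} (in the form of \Cref{lem6.7}) delivers is $Fun^{alg}_{\mathcal{B}}(\underline{Psh}_{\mathcal{B}}(C^{\Delta^1}),\mathcal{Y})\simeq Fun^{alg}_{\mathcal{B}}(\underline{Psh}_{\mathcal{B}}(C),\mathcal{Y})^{\Delta^1}$: the object $\underline{Psh}_{\mathcal{B}}(C^{\Delta^1})$ is the \emph{copower} $\underline{Psh}_{\mathcal{B}}(C)\otimes\Delta^1$ in $TOP_{\mathcal{B}}$, not the power. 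The power in the algebraic direction would be a topos receiving maps, i.e.\ the tensor in Johnstone's geometric direction, which is a different object that neither \Cref{thm5.2} nor \Cref{lem6.7} constructs. So, as written, your key step establishes the wrong universal property; once the variance is flipped (dualize \Cref{cor3.7}, ask for finite colimits of $Top_{\mathcal{B}}$ and the tensoring), the remainder of your outline does line up with the paper's argument.
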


\begin{lemma}\label{lem6.2}
Suppose that $f^{*} : \mathcal{B} \rightarrow \mathcal{C}$ is geometric morphism.  Then we have an equivalence $f^{*}(X^{K}) \simeq f^{*}(X)^{K}$, natural in $\mathcal{B}$-categories $X$ and finite $\infty$-categories $K$.
\end{lemma}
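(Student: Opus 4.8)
The plan is to reduce this statement about a cotensor (a priori an infinitary internal-hom construction) to a statement about finite limits, which the inverse image $f^*$ preserves. I regard the induced functor $f^* : \mathrm{Cat}(\mathcal{B}) \to \mathrm{Cat}(\mathcal{C})$ as applying the left-exact $f^* : \mathcal{B} \to \mathcal{C}$ levelwise to simplicial objects; this is well defined since the Segal and completeness conditions of \Cref{def4.2} are finite-limit conditions. Two formal facts set the stage. Since finite limits of $\mathcal{B}$-categories are computed levelwise in $\mathcal{B}_{\Delta}$ and $f^*$ is left exact, the induced functor preserves finite limits. And since $\mathrm{const}_{\mathcal{B}}$ is the inverse-image functor attached to the terminal geometric morphism $\mathcal{B} \to *$ of \Cref{con4.4}, and inverse images compose along $\mathcal{C} \to \mathcal{B} \to *$, there is a natural equivalence $f^* \mathrm{const}_{\mathcal{B}}(K) \simeq \mathrm{const}_{\mathcal{C}}(K)$.

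Next I would construct the comparison map. The evaluation $\mathrm{const}_{\mathcal{B}}(K) \times X^{K} \to X$, adjoint to the identity of $X^{K} = \underline{Fun}_{\mathcal{B}}(\mathrm{const}_{\mathcal{B}}(K), X)$ via the cartesian closed structure of \Cref{con4.4}, is pushed through the product-preserving $f^*$ and composed with $f^* \mathrm{const}_{\mathcal{B}}(K) \simeq \mathrm{const}_{\mathcal{C}}(K)$; its adjoint is a natural map
$$
\theta_{K, X} : f^*(X^{K}) \longrightarrow \underline{Fun}_{\mathcal{C}}(\mathrm{const}_{\mathcal{C}}(K), f^* X) = f^*(X)^{K},
$$
natural in the $\mathcal{B}$-category $X$ and contravariantly in $K$. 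It then remains to prove that $\theta_{K,X}$ is an equivalence for finite $K$.

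For the reduction, recall from \Cref{def3.1} that a finite $\infty$-category $K$ is a finite colimit of simplices $\Delta^{n_i}$ in $Cat_{\infty}$. Since $\mathrm{const}_{\mathcal{B}}$ is a left adjoint it preserves this colimit, and since $\underline{Fun}_{\mathcal{B}}(-, X)$ carries colimits in its first argument to limits, $X^{K} \simeq \lim_i X^{\Delta^{n_i}}$, a finite limit; the identical statement holds over $\mathcal{C}$. As $f^*$ preserves finite limits and $\theta$ is natural, $\theta_{K,X}$ is a finite limit of the maps $\theta_{\Delta^{n_i}, X}$, so it suffices to treat $K = \Delta^n$. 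Applying the same bookkeeping to the spine decomposition $\Delta^n \simeq \Delta^1 \amalg_{\Delta^0} \cdots \amalg_{\Delta^0} \Delta^1$ (a categorical equivalence) reduces us to $K = \Delta^0$, where $X^{\Delta^0} = X$ and $\theta$ is visibly the identity, and to the single genuine case $K = \Delta^1$.

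The remaining step, the base case of the arrow, is where the real content lies. Here I would invoke the explicit levelwise description of the cotensor: for each $m$ the object $(X^{\Delta^1})_m$ is the $\mathcal{B}$-object of prisms $\mathrm{const}_{\mathcal{B}}(\Delta^m \times \Delta^1) \to X$, and since $\Delta^m \times \Delta^1$ is again a finite $\infty$-category the Segal conditions present this as an iterated pullback of the levels $X_k$ of $X$. This exhibits $(X^{\Delta^1})_\bullet$ as obtained from $X_\bullet$ by finite limits in $\mathcal{B}$ at each level, which the levelwise left-exact $f^*$ preserves; unwinding the identifications shows the resulting equivalence is exactly $\theta_{\Delta^1, X}$. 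I expect the main obstacle to be precisely this: pinning down the levelwise finite-limit formula for $X^{\Delta^1}$ in the internal language of \cite{Martini-Wolf-2} and verifying that it is compatible with $f^*$ strictly enough to identify the induced equivalence with the natural map $\theta$, rather than merely with an abstractly existing one. Everything else is formal manipulation of adjunctions and finite limits.
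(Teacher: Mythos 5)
Your proposal is correct and follows essentially the same route as the paper: both reduce the finite $\infty$-category $K$ to its (finitely many) simplices, observe that the cotensor by a simplex is computed by finite limits of the levels of $X$, and conclude by left-exactness of the levelwise $f^*$. The only real difference is in the base case: where you propose a spine reduction to $K=\Delta^1$ and an explicit unwinding of prisms, the paper simply cites the identity $C^{\Delta^n}_0 \simeq C_n$ from \cite[Remark 1.2.1.9]{Martini-Wolf-2} (together with $C^{K\times\Delta^n}_0\simeq C^K_n$ to reduce the check to level $0$), which dispatches exactly the ``main obstacle'' you flag at the end.
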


\begin{proof}

We note that by \cite[Remark 1.2.1.9]{Martini-Wolf-2}, we have an equivalence 
\begin{equation}\label{eq1}
C^{\Delta^{n}}_{0} \simeq C_{n}
\end{equation}
natural in $\Delta^{n}$. In particular, we have that 
$$
C^{K \times \Delta^{n}}_{0} \cong ((C^K)^{\Delta^{n}})_{0} \simeq C^{K}_{n}
$$
so it suffices to show that  $f^{*}(X^{K})_{0} \simeq (f^{*}(X)^{K})_{0}$. 

By \cite[Proposition 4.3.2.14]{LurieHTT}, we can write finite quasi-categories $K$ as a homotopy colimit of its nondegenerate simplices, so using \cref{eq1} and the fact that $f^{*}$ preserves finite limits
$$
f^{*}(X^{K}) \cong f^{*}(\underset{\Delta^{n} \rightarrow K}{\underset{\longleftarrow}{lim}} X^{\Delta^{n}} ) \cong \underset{\Delta^{n} \rightarrow K}{\underset{\longleftarrow}{lim}} f^{*}(X^{\Delta^{n}}) ) \cong  \underset{\Delta^{n} \rightarrow K}{\underset{\longleftarrow}{lim}} f^{*}(X)_{n} \cong  \underset{\Delta^{n} \rightarrow K}{\underset{\longleftarrow}{lim}} f^{*}(X)^{\Delta^{n}} \cong f^{*}(X)^{K}
$$

\end{proof}

\begin{example}\label{exam6.3}
In the case that $\mathcal{B} = Psh(C)$, where $C$ is an $\infty$-category, we want to identify the presheaf $\Omega_{\mathcal{B}}$. 
We have equivalences 
\begin{equation}\label{functorfunc}
Cat_{\mathcal{B}} \cong Sh_{Cat_{\infty}}(Psh(C)) \cong Fun(C^{op}, Cat_{\infty}) = Psh_{Cat_{\infty}}(C)
\end{equation}
whee the second equivalence comes from the universal property of $Psh(C)$ as the free cocompletion. Under this equivalence, we see that $\Omega_{\mathcal{B}}$
can be identified with the presheaf $c \mapsto Psh(C)_{/y_{c}} \cong \mathcal{S}$. That is, it is the constant presheaf with value $\mathcal{S}$.

On the other hand suppose that $\mathcal{B}$ is an $\infty$-topos, so $\mathcal{B}$ is a full subcategory of $Psh(C)$. Thus, we can identify $\mathcal{B}(A) \subseteq Psh(C)(A)$ is an inclusion of a full subcategory, and under the identification of \cref{functorfunc}, we see that $\mathcal{B}$ can be identified with a functor $F : \mathcal{C} \rightarrow Cat_{\infty}$ such that $F(c)$ is a full subcategory of $\mathcal{S}$ for each $c \in C$.

 \end{example}

\begin{construction}\label{con6.4}
Suppose that $\mathcal{B}$ is a left exact localization $L$ of $Psh(C)$. Then we have a map natural in $X, Y \in Cat(\mathcal{B})$
\begin{multline*}
Fun_{\mathcal{B}}(X^{\Delta^{1}}, Y) \simeq \int_{c} Fun(X(L(c))^{\Delta^{1}}, Y(L(c))) \rightarrow \int_{c} Fun(X(L(c)), Y(L(c)))^{\Delta^{1}} \simeq \\  \int_{c} Fun(X(L(c)), Y(L(c))^{\Delta^{1}} ) 
\simeq Fun_{\mathcal{B}}(X, Y^{\Delta^{1}}) \simeq Fun_{\mathcal{B}}(X, Y)^{\Delta^{1}}
\end{multline*}
Here the first and fourth map are the equivalences from  \cite[Lemma 2.4.2.4]{Martini-Wolf-2} (for the fourth note that in $Sh_{Cat_{\infty}}(\mathcal{B})$ the cotensoring $(-)^{\Delta^{1}}$ is given by applying $(-)^{\Delta^{1}}$ sectionwise. The second map is induced by the natural map of \cref{con5.5}. The remaining maps come from the universal property of the cotensoring. 

Note that this map is natural  with respect to maps $Cat(\mathcal{B}) \leftrightarrows Cat(\mathcal{A})$ induced by geometric morphisms of $\infty$-topoi.
\end{construction}

\begin{definition}\label{def6.5}
Let $Fin_{\mathcal{B}}$ be the collection of 
$\mathcal{B}$-categories which can be identified with locally constant functors of finite $\infty$-categories on $\mathcal{B}$. A  $\mathcal{B}$-category $J$ is said to be filtered if
the colimit functor  $colim :\underline{Fun}_{\mathcal{B}}(I, \Omega) \rightarrow \Omega$ is continuous for each $I \in Fin_{\mathcal{B}}$. 

We say that a functor $f : C \rightarrow \Omega_{\mathcal{B}}$ is filtered if it is contained in the free $Fin_{\mathcal{B}}$ cocompletion of $C$.

We will write $\underline{Filt}_{\mathcal{B}}(X)$ for the full subcategory of $\underline{Fun}_{\mathcal{B}}(X, \Omega_{\mathcal{B}})$ spanned by filtered functors, and $Filt_{\mathcal{B}}(X)$ for the global sections. Write $\underline{Fun}^{cc}_{\mathcal{B}}(X ,Y)$ for the full subcategory of $\underline{Fun}_{\mathcal{B}}(X, Y)$ spanned by cocontinuous functors and $Fun^{cc}_{\mathcal{B}}(X, Y)$ for its global sections. 

 \end{definition}

\begin{lemma}\label{lem6.6}

The functor in \cref{con6.4} corestricts to a functor 
$$
Fun_{\mathcal{B}}^{cc}(X^{\Delta^{1}}, Y) \rightarrow Fun_{\mathcal{B}}^{cc}(X, Y)^{\Delta^{1}}
$$
and 
$$
Filt_{\mathcal{B}}(X^{\Delta^{1}}) \rightarrow Filt_{\mathcal{B}}(X)^{\Delta^{1}}.
$$

\end{lemma}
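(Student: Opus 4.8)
The plan is to show that the natural map from \Cref{con6.4} sends cocontinuous (resp.\ filtered) functors to cocontinuous (resp.\ filtered) functors, thereby producing the two corestrictions. Since the whole map of \Cref{con6.4} is assembled sectionwise through the end formula coming from \cite[Lemma 2.4.2.4]{Martini-Wolf-2}, the strategy is to reduce both statements to the level of the underlying $\infty$-categories of sections and then invoke the already-established \Cref{thm5.2}. Concretely, the equivalence $Cat_{\mathcal{B}} \simeq Sh_{Cat_{\infty}}(\mathcal{B})$ of \Cref{equivalencewsheaves} lets us evaluate the map at each object $c$ of the site, and after this evaluation the component of the map is (up to the identifications built into \Cref{con6.4}) precisely the transformation $\mathbf{T}$ of \Cref{con5.5} applied to the section categories. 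The filtered case is then essentially \Cref{thm5.2} applied sectionwise, once one knows that being a filtered $\mathcal{B}$-functor can be tested sectionwise.

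First I would treat the filtered statement. The key input is that the notion of filtered $\mathcal{B}$-functor in \Cref{def6.5} — membership in the free $Fin_{\mathcal{B}}$-cocompletion — admits a sectionwise description: a functor $C \to \Omega_{\mathcal{B}}$ is filtered exactly when each of its sections lands in the free completion under finite colimits of the corresponding ordinary $\infty$-category. This should follow from \Cref{equivalencewsheaves} together with the characterization of $Fin_{\mathcal{B}}$ as locally constant diagrams of finite $\infty$-categories, so that $Fin_{\mathcal{B}}$-filteredness is detected stalkwise by the classical notion of filtered functor into $\mathcal{S}$ used in \Cref{thm5.2}. Granting this, the corestriction $Filt_{\mathcal{B}}(X^{\Delta^1}) \to Filt_{\mathcal{B}}(X)^{\Delta^1}$ is obtained by applying the sectionwise equivalence of \Cref{thm5.2} and checking that it is compatible with the end formula through which \Cref{con6.4} is defined; this compatibility is exactly the remark at the close of \Cref{con6.4} that the map is natural in geometric morphisms, hence in particular behaves well under the evaluation-at-$c$ geometric morphisms $\mathcal{B} \to \mathcal{S}$.

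For the cocontinuous statement I would argue similarly but using that cocontinuity of a $\mathcal{B}$-functor is likewise a sectionwise (indeed pointwise-in-$\mathcal{B}$) condition, so that it suffices to know that the map of \Cref{con5.5}, i.e.\ $\mathbf{T}$, sends cocontinuous functors into cocontinuous ones sectionwise. Since $\mathbf{T}_i$ was identified in \Cref{con5.5} with a left Kan extension along $C^{d^i}$, and left Kan extensions preserve colimits in the functor variable, the preservation of cocontinuity is immediate on sections; the compatibility with the cotensor $(-)^{\Delta^1}$ on the target is again supplied by the sectionwise description of the cotensor in $Sh_{Cat_\infty}(\mathcal{B})$ noted in \Cref{con6.4}.

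The hard part will be making the reduction to sections fully rigorous — specifically, verifying that both "filtered" and "cocontinuous" for $\mathcal{B}$-functors are detected sectionwise in a way that is genuinely compatible with the end/coend bookkeeping of \Cref{con6.4}, rather than merely pointwise on objects. The filtered case is the more delicate of the two, because the definition in \Cref{def6.5} is phrased in terms of the internal colimit functor $colim : \underline{Fun}_{\mathcal{B}}(I,\Omega) \to \Omega$ being continuous for $I \in Fin_{\mathcal{B}}$, and one must connect this internal characterization to the external sectionwise filteredness that \Cref{thm5.2} governs. I expect this bridge to rest on the equivalence of \Cref{equivalencewsheaves} and the identification of $\Omega_{\mathcal{B}}$ from \Cref{exam6.3}, so that the internal $Fin_{\mathcal{B}}$-cocompletion is computed by the classical $Fin$-cocompletion in each section; once that identification is in hand, the remaining verifications are the routine coend manipulations already carried out in the proof of \Cref{thm5.2}.
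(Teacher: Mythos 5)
Your proposal follows essentially the same route as the paper: reduce to the case $\mathcal{B} = Psh(C)$ (equivalently, work sectionwise via \cref{equivalencewsheaves}), observe via \cref{exam6.3} that the free $Fin_{\mathcal{B}}$-cocompletion, and hence filteredness, is computed sectionwise, and then use the sectionwise description of the map of \cref{con6.4} as the transformation $\mathbf{T}$ of \cref{con5.5}, whose corestriction to filtered functors is exactly what the first part of the proof of \cref{thm5.2} establishes. The paper's own proof is no less terse on the two points you flag as ``the hard part'' (sectionwise detection of filteredness and of cocontinuity), so your diagnosis of where the real content lies is accurate.

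One caution on the cocontinuous case: the principle you invoke --- that left Kan extension preserves colimits ``in the functor variable'' --- says that $U \mapsto \mathbf{T}_i(U)$ commutes with colimits of diagrams of functors; it does not say that $\mathbf{T}_i(U)$ is itself a cocontinuous functor when $U$ is, and in general the left Kan extension of a cocontinuous functor need not be cocontinuous. The justification you actually need is the one already extracted in the proof of \cref{thm5.2}: the relevant comma categories of $C^{d^i}$ have terminal objects $id_c$, so $\mathbf{T}_i(U)$ is equivalent to the precomposition of $U$ with the degeneracy $C \rightarrow C^{\Delta^1}$, $c \mapsto id_c$, which preserves colimits (colimits in $C^{\Delta^1}$ being computed at domain and codomain, and the degeneracy being a left adjoint to evaluation at $0$). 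With that substitution your argument closes and agrees with the paper's.
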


\begin{proof}
To see this, note that since $\mathcal{B}$ is an $\infty$-topos, it can be identified with a full subcategory of $Psh(C)$ for some $\infty$-category $C$. Thus, we can assume that $\mathcal{B} = Psh(C)$. In this case, colimits are computed pointwise, so the result follows from the description of \cref{con6.4}. 

For the second statement, note that we can assume that $\mathcal{B} = Psh(C)$. In this case, using \cref{exam6.3} we see that the free $Fin_{\mathcal{B}} $-cocompletion functor is nothing more than applying free finite cocompletion sectionwise.  Thus, the result follows from the description of the map from \cref{con6.4}.

\end{proof}

\begin{lemma}\label{lem6.7}
For any $\infty$-topos $\mathcal{B}$, the free cocompletion functor  $Psh_{\mathcal{B}}(-) : CAT_{\mathcal{B}} \rightarrow TOP_{\mathcal{B}}$  takes cotensors with $\Delta^{1}$ to tensors with $\Delta^{1}$.
\end{lemma}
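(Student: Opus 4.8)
The plan is to verify directly the universal property characterising the tensor $\Delta^{1} \otimes Psh_{\mathcal{B}}(C)$ in $TOP_{\mathcal{B}}$: the content of the lemma is the identification $\Delta^{1} \otimes Psh_{\mathcal{B}}(C) \simeq Psh_{\mathcal{B}}(C^{\Delta^{1}})$, where $C^{\Delta^{1}}$ is the cotensor of $C$ with $\Delta^{1}$ in $CAT_{\mathcal{B}}$. Concretely, I would show that for every $\mathcal{B}$-topos $\mathcal{Y}$ there is an equivalence of $\infty$-categories
$$
Hom_{TOP_{\mathcal{B}}}(Psh_{\mathcal{B}}(C^{\Delta^{1}}), \mathcal{Y}) \simeq Hom_{TOP_{\mathcal{B}}}(Psh_{\mathcal{B}}(C), \mathcal{Y})^{\Delta^{1}},
$$
natural in $\mathcal{Y}$ and induced by the comparison map of \Cref{con6.4}. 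Since $Hom_{TOP_{\mathcal{B}}}(-, \mathcal{Y}) = Fun^{alg}_{\mathcal{B}}(-, \mathcal{Y})$ by \Cref{def4.9}, the task reduces to describing algebraic morphisms out of the two presheaf topoi.

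The second step is to invoke the universal property of the free cocompletion in its algebraic (Diaconescu) form: for any $\mathcal{B}$-category $D$ one has a natural equivalence
$$
Fun^{alg}_{\mathcal{B}}(Psh_{\mathcal{B}}(D), \mathcal{Y}) \simeq Filt_{\mathcal{B}}(D, \mathcal{Y}),
$$
identifying colimit- and finite-limit-preserving morphisms with filtered (flat) functors $D \to \mathcal{Y}$. Applying this to $D = C^{\Delta^{1}}$ and to $D = C$ rewrites both sides of the target equivalence in terms of filtered functors, so the claim is reduced to
$$
Filt_{\mathcal{B}}(C^{\Delta^{1}}, \mathcal{Y}) \simeq Filt_{\mathcal{B}}(C, \mathcal{Y})^{\Delta^{1}}.
$$

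For $\mathcal{Y} = \Omega_{\mathcal{B}}$ (the universe of \Cref{con4.8}, equal to $Psh_{\mathcal{B}}(*)$) this last equivalence is exactly \Cref{lem6.6}, the internalisation of \Cref{thm5.2}, and its comparison map is the one transported from \Cref{con6.4}. To obtain it for an arbitrary $\mathcal{B}$-topos $\mathcal{Y}$, I would reduce to this case: as in the proof of \Cref{lem6.6} one may assume $\mathcal{B} = Psh(C')$, so that every construction is computed sectionwise, and one may present $\mathcal{Y}$ as a left exact localization $L : Psh_{\mathcal{B}}(D) \to \mathcal{Y}$. Post-composition with $L$ exhibits $Filt_{\mathcal{B}}(C, \mathcal{Y})$ as a reflective subcategory of the flat functors $C \to Psh_{\mathcal{B}}(D)$, and the latter are governed, via the currying $\underline{Fun}_{\mathcal{B}}(C, Psh_{\mathcal{B}}(D)) \simeq \underline{Fun}_{\mathcal{B}}(C \times D, \Omega_{\mathcal{B}})$, by the $\Omega_{\mathcal{B}}$-valued case already settled. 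Since both $L$ and the currying are compatible with $(-)^{\Delta^{1}}$, the equivalence descends to $\mathcal{Y}$.

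The main obstacle will be precisely this last propagation: upgrading the flat-functor comparison from the universe $\Omega_{\mathcal{B}}$ to a general target $\mathcal{Y}$ while keeping track of naturality in $\mathcal{Y}$ and of compatibility with the comparison map of \Cref{con6.4}, together with the bookkeeping needed to see that the Diaconescu identification is natural enough to match the cotensor on $C^{\Delta^{1}}$ with the tensor under construction. By contrast, the genuinely homotopical input has already been isolated in \Cref{thm5.2}, so the remaining work is structural rather than computational.
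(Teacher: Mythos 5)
Your opening two moves match the paper's: identify the putative tensor as $Psh_{\mathcal{B}}(C^{\Delta^1})$, use Diaconescu's theorem to translate algebraic morphisms out of the presheaf $\mathcal{B}$-topoi into flat functors, and settle the universe-valued comparison by \cref{thm5.2} and \cref{lem6.6}. The gap is in your third step, the passage from the universe to an arbitrary $\mathcal{B}$-topos target $\mathcal{Y}$. Note first that in the paper $Filt_{\mathcal{B}}(-)$ (\cref{def6.5}) only ever takes values in the universe; the form of Diaconescu's theorem actually used in the paper is $Fun^{alg}_{\mathcal{B}}(\underline{Psh}_{\mathcal{B}}(C),\mathcal{X})\simeq Filt_{\mathcal{X}}(f^{*}C)$, i.e.\ one base-changes along the algebraic morphism $f^{*}:\mathcal{B}\to\mathcal{X}$ presenting $\mathcal{Y}$ (via \cref{thm4.11}) so that the target becomes the universe $\Omega_{\mathcal{X}}$ of $\mathcal{X}$ itself, and then \cref{exam6.3} reduces the sectionwise comparison of \cref{con6.4} to the map of \cref{con5.5} with target a full subcategory of $\mathcal{S}$, where \cref{thm5.2} applies. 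Your substitute for this base change does not work as stated. Post-composition with a left exact localization $L:Psh_{\mathcal{B}}(D)\to\mathcal{Y}$ does not exhibit $Filt(C,\mathcal{Y})$ as a reflective subcategory of $Filt(C,Psh_{\mathcal{B}}(D))$: the standard subtopos dictionary makes \emph{pre}composition with $L$ fully faithful on algebraic morphisms \emph{out of} $\mathcal{Y}$, whereas you need morphisms \emph{into} $\mathcal{Y}$; composing a flat functor $C\to\mathcal{Y}$ with the non-cocontinuous inclusion $i:\mathcal{Y}\hookrightarrow Psh_{\mathcal{B}}(D)$ need not be flat, flatness into $\mathcal{Y}$ is computed with colimits in $\mathcal{Y}$ rather than in $Psh_{\mathcal{B}}(D)$, and essential surjectivity of $L\circ(-)$ on flat functors is not clear. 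The currying step is also imprecise: a functor $C\to Psh_{\mathcal{B}}(D)$ is flat iff it is flat in the $C$-variable after evaluating at each object of $D$, which is not flatness of the curried functor on the product.

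Even if you did identify $Filt(C,\mathcal{Y})$ with a full subcategory of $Filt(C,Psh_{\mathcal{B}}(D))$, you would still need that identification to be compatible with replacing $C$ by $C^{\Delta^1}$ and with the comparison map of \cref{con6.4}, which is essentially the statement you are trying to prove. So the missing idea is precisely the one the internal machinery of Section 4 supplies: transport the problem into the topos $\mathcal{X}$ corresponding to $\mathcal{Y}$, where every target is the universe and the genuinely homotopical input of \cref{thm5.2} can be applied sectionwise.
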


\begin{proof}

Suppose that $\mathcal{X}$ is a $\mathcal{B}$-topos with associated algebraic morphism $f_{*} : \mathcal{X} \rightarrow \mathcal{B}$.

We have a diagram
\begin{equation}\label{func}
\xymatrix
{
 Fun_{\mathcal{B}}^{cc}(\underline{Psh}_{\mathcal{B}}(C), \mathcal{X})^{\Delta^{1}}  \ar[rr]_>>>>>>>>>>>{ ((h_{C})^{*} )^{\Delta^{1}}} && Fun_{\mathcal{B}}(C, \mathcal{X}) )^{\Delta^{1}} \ar[r] & Fun_{\mathcal{X}}(f^{*}C, \Omega_{\mathcal{X}})^{\Delta^{1}} \ar[r] &  Fun_{\mathcal{X}}(f^{*}C, \Omega_{\mathcal{X}})^{\Delta^{1}}   \\
Fun_{\mathcal{B}}^{cc}(\underline{Psh}_{\mathcal{B}}(C^{\Delta^{1}}), \mathcal{X})  \ar[rr]_>>>>>>>>>>>{ ((h_{C})^{*} )} && Fun_{\mathcal{B}}(C^{\Delta^{1}}, \mathcal{X}) \ar[u] \ar[r] & Fun_{\mathcal{X}}(f^{*}(C^{\Delta^{1}}), \Omega_{\mathcal{X}})  \ar[u]   \ar[r] & \ar[u]  Fun_{\mathcal{X}}(f^{*}(C)^{\Delta^{1}}), \Omega_{\mathcal{X}})
 }
\end{equation}
where $h_{C}$ is the internal version of the Yoneda embedding from \cite{Martini-Wolf-2}, 
and  the vertical maps come from the natural map of \cref{con6.4}. The diagram commutes by the naturality of the construction of \cref{con6.4}, \cref{lem6.2} as well as the identification $f_{*}\Omega_{\mathcal{X}} = \mathcal{X}$. 

By Diaconescu's theorem (\cite[Proposition 3.8.1.2]{Martini-Wolf-2}), as well as \cite[Proposition 2.3.4.6 and 2.2.3.7]{Martini-Wolf-2} the horizontal maps (co)restrict to equivalences:
$$
 Fun_{\mathcal{B}}^{alg}(\underline{Psh}_{\mathcal{B}}(C), \mathcal{X})^{\Delta^{1}}  \simeq Filt_{\mathcal{X}}(f^{*}(C))^{\Delta^{1}}, 
 Fun_{\mathcal{B}}^{alg}(\underline{Psh}_{\mathcal{B}}(C^{\Delta^{1}}), \mathcal{X}) \simeq Filt_{\mathcal{X}}(f^{*}(C)^{\Delta^{1}})
$$

Thus, if we show that the rightmost vertical map of \cref{func} induces an equivalence
$$
Filt_{\mathcal{X}}(C^{\Delta^{1}}) \simeq Filt_{\mathcal{X}}(C)^{\Delta^{1}}
$$
we will get an induced equivalence
$$
Fun_{\mathcal{B}}^{alg}(\underline{Psh}_{\mathcal{B}}(C^{\Delta^{1}}), \mathcal{X})  \rightarrow Fun_{\mathcal{B}}^{alg}(\underline{Psh}_{\mathcal{B}}(C), \mathcal{X})^{\Delta^{1}} \
$$
which is natural in $\mathcal{X}$. 

In this case \cref{con6.4} reduces to proving that the natural map $Filt(C(c)^{\Delta^{1}}, \mathcal{X}(c) ) \rightarrow Filt(C(c), \mathcal{X}(c))^{\Delta^{1}}$ from \cref{con5.5} is an equivalence. Since $\mathcal{X}(c)$ is a full subcategory of $\mathcal{S}$ by \cref{exam6.3}, the result follows from \cref{thm5.2}.

\end{proof}

\begin{proof}[Proof of Theorem \cref{thm6.1}]
By the dual of \cref{cor3.7}, it suffices to show that $TOP_{\mathcal{B}}$ has a tensoring with $\Delta^{1}$ and its underlying $\infty$-category has finite colimits.
The second point,  is \cite[Proposition 3.2.6.6]{Martini-Wolf-2}. For the first point, note that if $\mathcal{X} = Psh_{\mathcal{B}}(C)$, then $\mathcal{X}\otimes \Delta^{1}$ exists by \cref{lem6.7}. In general, choose an accessible localization $ L : s Psh_{\mathcal{B}}(C) \leftrightarrows \mathcal{X} : i$. In this case, it is easy to see that if we form the pushout
$$
\xymatrix
{
Psh_{\mathcal{B}}(C) \times Psh_{\mathcal{B}}(C) \ar[r] \ar[d]_{(L \times L)} & \ar[d] Psh_{\mathcal{B}}(C) \otimes \Delta^{1} \\
 \mathcal{X} \times \mathcal{X} \ar[r] & P
}
$$
then $P$ satisfies the universal property of $\mathcal{X}\otimes \Delta^{1}$.
\end{proof}

\section{The Object Classifier} \label{theobjclass}

In this section, we will show the existence of the object classifier for $\infty$-topoi. 
\\

\begin{definition}\label{def7.1}
We will write $PR$ for the full subcategory of $CAT_{\infty}$ consisting of presentable $\infty$-categories.

We will write $(TOP^{alg})^{/ \mathcal{B}}$ for the  subcategory of $CAT_{\infty}^{/\mathcal{B}}$ spanned by $\mathcal{B}$-topoi and algebraic morphisms, and $(TOP^{geom})^{ \mathcal{B}/}$ for the  subcategory of $CAT_{\infty}^{\mathcal{B}/}$ spanned by $\mathcal{B}$-topoi and right adjoints of algebraic morphisms.
\end{definition}

\begin{remark}\label{rmkxxx}
It follows from \cite[Proposition 4.2.1.5]{LurieHTT} and \cref{con2.12} that there is an equivalence of $\infty$-categories 
$$
((TOP^{alg})^{/ \mathcal{B}})^{th} \simeq Top_{/ \mathcal{B}}^{alg}.
$$
\end{remark}

\begin{lemma}\label{lem7.2}
There is a functor $U: \text{TOP}_{\mathcal{B}} \to CAT_{\infty}$ of $\infty$-bicategories mapping a topos to its underlying category and a geometric morphism to its left adjoint. Moreover, this functor is corepresentable. 
\end{lemma}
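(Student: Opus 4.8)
The plan is to prove the statement in two stages: first build the functor $U$ of $\infty$-bicategories, and then exhibit a corepresenting object, which will be the object classifier. To build $U$, I would use the adjunction $const_{\mathcal{B}} \dashv \Gamma_{\mathcal{B}}$ of \cref{con4.4}. On objects set $U(X) := \Gamma_{\mathcal{B}}(X)$, the $\infty$-category of global sections; under \cref{thm4.11} this is exactly the underlying $\infty$-category of the $\infty$-topos over $\mathcal{B}$ associated to the $\mathcal{B}$-topos $X$. On hom-$\infty$-categories, since the $1$-morphisms of $TOP_{\mathcal{B}}$ are the algebraic morphisms, which form a full subcategory of $Fun_{\mathcal{B}}(X,Y) = \Gamma_{\mathcal{B}}\underline{Fun}_{\mathcal{B}}(X,Y)$, I would define $U$ on homs by the composite $Fun^{alg}_{\mathcal{B}}(X,Y) \hookrightarrow \Gamma_{\mathcal{B}}\underline{Fun}_{\mathcal{B}}(X,Y) \to Fun(\Gamma_{\mathcal{B}}X, \Gamma_{\mathcal{B}}Y)$, the last map sending an internal functor to its effect on global sections. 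The composition and unit coherences making this a functor of $\infty$-bicategories follow from the $2$-functoriality of $\Gamma_{\mathcal{B}}$, together with the fact that an algebraic morphism is sent to its own underlying functor, which, read through \cref{rmk4.13}, is the left adjoint (inverse image) of the corresponding geometric morphism, as the statement demands.

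For corepresentability, the corepresenting object will be the object classifier, realized as a free $\mathcal{B}$-topos. Let $\mathbb{F}$ denote the constant $\mathcal{B}$-category $const_{\mathcal{B}}((\mathcal{S}^{fin})^{op})$ on the opposite of finite spaces, i.e. the free $\mathcal{B}$-category with finite limits on a single object, and set $\mathcal{O}_{\mathcal{B}} := \underline{Psh}_{\mathcal{B}}(\mathbb{F})$. By \cref{def4.9} this is a $\mathcal{B}$-topos, being a presheaf $\mathcal{B}$-topos on a small $\mathcal{B}$-category. I would then assemble the desired equivalence from the chain $Hom_{TOP_{\mathcal{B}}}(\mathcal{O}_{\mathcal{B}}, X) = Fun^{alg}_{\mathcal{B}}(\underline{Psh}_{\mathcal{B}}(\mathbb{F}), X) \simeq Filt_{X}(f^{*}\mathbb{F}) \simeq Fun^{lex}_{\mathcal{B}}(\mathbb{F}, X) \simeq \Gamma_{\mathcal{B}}(X) = U(X)$, where $f : X \to \mathcal{B}$ is the structure morphism. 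Here the first equivalence is Diaconescu's theorem exactly as invoked in the proof of \cref{lem6.7}; the second uses that filtered (flat) functors out of a finitely complete $\mathcal{B}$-category coincide with the left exact ones and that $f^{*}$ preserves the constant free finite-limit structure; and the last is the universal property of the free finite-limit completion on one object, identifying left exact functors $\mathbb{F} \to X$ with objects of $X$ by evaluation at the universal object. Note that one keeps non-invertible $2$-cells throughout, so both sides are genuine $\infty$-categories rather than cores.

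The main obstacle I anticipate is not any single equivalence above but upgrading the whole chain to a natural equivalence of functors of $\infty$-bicategories $Hom_{TOP_{\mathcal{B}}}(\mathcal{O}_{\mathcal{B}}, -) \simeq U$: one must check naturality in $X$ not merely on underlying $\infty$-categories but compatibly with composition of algebraic morphisms and with $2$-cells, so that the corepresentable and the global-sections $2$-functors agree as $2$-functors. Relatedly, care is needed to pin down the internal free finite-limit completion $\mathbb{F}$ and the identity ``flat $=$ lex'' in the $\mathcal{B}$-categorical setting of \cite{Martini-Wolf-2}. Once these internal inputs are in place, I expect the naturality to follow from the naturality of Diaconescu's equivalence with respect to the geometric morphisms $f : X \to \mathcal{B}$, in the same spirit as the naturality arguments already used in \cref{con6.4} and \cref{lem6.7}.
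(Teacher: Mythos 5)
Your proposal is correct in outline and lands on the same two-step skeleton as the paper's proof: apply Diaconescu's theorem to a presheaf $\mathcal{B}$-topos on a free completion of the terminal $\mathcal{B}$-category, then use a freeness universal property to identify the resulting functor category with $\Gamma_{\mathcal{B}}(X)$. The difference is in which free completion you use and hence which auxiliary lemma carries the reduction. The paper takes $Psh_{\mathcal{B}}\bigl(Psh^{Fin_{\mathcal{B}}}_{\mathcal{B}}(*)\bigr)$, the presheaf topos on the free \emph{finite-colimit} completion of the point, and then invokes the universal property $Filt_{\mathcal{B}}(Psh^{Fin_{\mathcal{B}}}_{\mathcal{B}}(\mathcal{C}),\mathcal{D})\simeq Fun_{\mathcal{B}}(\mathcal{C},\mathcal{D})$ cited from Martini--Wolf, so that filtered functors out of the completion are simply all functors out of $*$, i.e. global sections. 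You instead take presheaves on the free \emph{finite-limit} completion $\mathbb{F}$ of the point and route through ``flat $=$ lex on a finitely complete domain'' followed by ``lex functors out of the free lex completion of a point are objects.'' This is the classical Johnstone/Mac\,Lane--Moerdijk argument and is perfectly sound, but it requires an internal $\mathcal{B}$-categorical flat-equals-lex statement and a base-change compatibility of $const_{\mathcal{B}}$ with the free lex completion, neither of which the paper sets up; the paper's choice of completion sidesteps both, since the needed universal property is exactly what \cite[Proposition 2.3.4.6 and 2.2.3.7]{Martini-Wolf-2} provide. One concrete caveat: with the paper's covariant convention $\underline{Psh}_{\mathcal{B}}(C)=\underline{Fun}_{\mathcal{B}}(C,\Omega_{\mathcal{B}})$ from \cref{con4.8}, your $\underline{Psh}_{\mathcal{B}}(\mathbb{F})$ with $\mathbb{F}=const_{\mathcal{B}}((\mathcal{S}^{fin})^{op})$ is presheaves on finite spaces rather than copresheaves, which is the wrong topos; your construction is correct under the standard contravariant convention, so you would need to either flip the variance of $\mathbb{F}$ or of $\underline{Psh}$ to match the paper. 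Your extra attention to constructing $U$ as a functor of $\infty$-bicategories and to the $2$-naturality of the representing equivalence is welcome --- the paper's proof is silent on both points.
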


\begin{proof}
Let $Fin_{\mathcal{B}} \subseteq Cat(\mathcal{B})$ be the full subcategory of finite $\mathcal{B}$-categories. 
Recall from \cite[Sections 1.2.15, 2.2.3]{Martini-Wolf-2} that there exists a free cocompletion functor $Psh^{Fin_{\mathcal{B}}}_{\mathcal{B}}(-) :CAT(\mathcal{B}) \rightarrow CAT(\mathcal{B})$, which satisfies the universal property 
$$
Filt_{\mathcal{B}}(Psh^{Fin_{\mathcal{B}}}_{\mathcal{B}}(\mathcal{C}), \mathcal{D}) \simeq Fun_{\mathcal{B}}(\mathcal{C}, \mathcal{D})
$$
for each $Fin$-cocomplete $\mathcal{B}$-category $\mathcal{D}$.  We have natural equivalences in topoi:  
\begin{multline*}
Hom_{TOP_{\mathcal{B}}}(Psh_{\mathcal{B}}(Psh^{Fin_{\mathcal{B}}}_{\mathcal{B}}(*)), \mathcal{E}) \simeq Filt_{\mathcal{B}}(Psh_{\mathcal{B}}^{Fin_{\mathcal{B}}}(*), \mathcal{E} ) \simeq Hom_{CAT(\mathcal{B})}(*, \mathcal{E}) \\
Hom_{CAT(\mathcal{S})}(*, \mathcal{E})  = Hom_{CAT_{\infty}}(*, \mathcal{E}) \simeq \mathcal{E}
\end{multline*}
where the first equivalences follows from \cite[Proposition 2.3.4.6 and 2.2.3.7]{Martini-Wolf-2}, as well as the special case of Diaconescu's theorem from \cite[Corollary 3.8.2.3]{Martini-Wolf-2},  the second is just the universal property of $Fin$-cocompletion and the third  is follows from the adjunction of \cref{con4.4}.
\end{proof}

\begin{lemma}\label{lem7.3}  
There is an equivalence of $\infty$-bicategories
$$
\Gamma : TOP_{\mathcal{B}} \rightarrow (TOP^{alg})^{/ \mathcal{B}}
$$
\end{lemma}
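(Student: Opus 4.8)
The plan is to realize $\Gamma$ as the global-sections $2$-functor and then apply the criterion from \cite{Gagna-Harpaz-Lanari} that a $2$-functor of $\infty$-bicategories is an equivalence precisely when it is essentially surjective on objects and a \emph{local equivalence}, i.e. induces an equivalence on every mapping $\infty$-category. To build $\Gamma$, I would start from the underlying-category $2$-functor $U : TOP_{\mathcal{B}} \to CAT_{\infty}$ of \Cref{lem7.2}. Each $\mathcal{B}$-topos $\mathcal{X}$ carries an essentially unique algebraic morphism to the terminal $\mathcal{B}$-topos, whose underlying $\infty$-category is $\mathcal{B}$; applying $U$ to these structure morphisms yields a $2$-natural transformation $U \Rightarrow \mathrm{const}_{\mathcal{B}}$, and by the defining property of the slice $\infty$-bicategory in \Cref{con2.12} this lifts $U$ to a $2$-functor $\Gamma : TOP_{\mathcal{B}} \to CAT_{\infty}^{/\mathcal{B}}$. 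Since $U$ sends $\mathcal{B}$-topoi to topoi and algebraic morphisms to algebraic morphisms, $\Gamma$ factors through the subcategory $(TOP^{alg})^{/\mathcal{B}}$ of \Cref{def7.1}.

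Essential surjectivity I would obtain by passing to cores. By \Cref{con2.12} the underlying $\infty$-category of $CAT_{\infty}^{/\mathcal{B}}$ is $(Cat_{\infty})_{/\mathcal{B}}$, so $\Gamma^{th}$ is identified with the global-sections functor $Top_{\mathcal{B}} \to Top_{/\mathcal{B}}^{alg}$, which is an equivalence by \Cref{thm4.11} together with \Cref{rmkxxx}. This yields essential surjectivity on objects and, applying $\mathbf{i}$ and \Cref{lem2.11}, shows that $\Gamma$ is already an equivalence on the cores of all mapping categories.

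The main difficulty is promoting this to a local equivalence on the full mapping $\infty$-categories. Using the inclusion $CAT_{\infty}^{/\mathcal{B}} \hookrightarrow Fun(\Delta^{1}_{\flat}, CAT_{\infty})$ from \Cref{con2.12}, I would describe $Hom_{(TOP^{alg})^{/\mathcal{B}}}(\Gamma\mathcal{X}, \Gamma\mathcal{Y})$ as the comma $\infty$-category (oriented fiber) of the composition functor $Hom_{CAT_{\infty}}(U\mathcal{X}, U\mathcal{Y}) \to Hom_{CAT_{\infty}}(U\mathcal{X}, \mathcal{B})$ over the structure morphism of $\mathcal{X}$, restricted to algebraic functors; its objects are algebraic morphisms $h : U\mathcal{X} \to U\mathcal{Y}$ together with a comparison $2$-cell relating the structure morphism of $\mathcal{X}$ to the composite of $h$ with the structure morphism of $\mathcal{Y}$. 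The key observation is that the terminal $\mathcal{B}$-topos is terminal in $TOP_{\mathcal{B}}$, so the space of such comparison $2$-cells is contractible; hence the oriented fiber collapses to the plain $\infty$-category of algebraic morphisms $U\mathcal{X} \to U\mathcal{Y}$ over $\mathcal{B}$, with their natural transformations.

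It then remains to identify this $\infty$-category of external algebraic morphisms over $\mathcal{B}$ with the internal functor category $Fun^{alg}_{\mathcal{B}}(\mathcal{X}, \mathcal{Y}) = Hom_{TOP_{\mathcal{B}}}(\mathcal{X}, \mathcal{Y})$, which is the $2$-categorical refinement of \Cref{thm4.11}. I would extract this from the adjunction $\mathrm{const}_{\mathcal{B}} \dashv \Gamma_{\mathcal{B}}$ of \Cref{con4.4}: global sections realizes the hom-category functor of $\Gamma$ as fully faithful, while the core-level equivalence from the previous step supplies essential surjectivity, so that this functor of $\infty$-categories is an equivalence. Combined with essential surjectivity on objects, this shows $\Gamma$ is an equivalence of $\infty$-bicategories. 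I expect the contractibility of the comparison $2$-cells and the fully faithful comparison of internal and external hom-categories to be the crux of the argument.
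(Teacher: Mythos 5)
Your overall architecture (essential surjectivity plus a local equivalence on mapping categories, with the $(\infty,1)$-level input coming from \cref{thm4.11} and \cref{rmkxxx}) matches the paper, and your comma-category description of $Hom_{(TOP^{alg})^{/\mathcal{B}}}(\Gamma\mathcal{X},\Gamma\mathcal{Y})$ is a reasonable alternative to the paper's more formal treatment. But the step you yourself flag as the crux is exactly where the proposal has a genuine gap. The adjunction $const_{\mathcal{B}} \dashv \Gamma_{\mathcal{B}}$ of \cref{con4.4} does not by itself show that the comparison functor $Fun^{alg}_{\mathcal{B}}(\mathcal{X},\mathcal{Y}) \to Fun^{alg}(U\mathcal{X},U\mathcal{Y})_{/\mathcal{B}}$ is fully faithful: what \cref{thm4.11} and the core-level argument give you is an equivalence of maximal sub-$\infty$-groupoids of the hom-categories, and the entire difficulty is seeing the \emph{non-invertible} $2$-cells. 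Asserting that ``global sections realizes the hom-category functor of $\Gamma$ as fully faithful'' is a restatement of the claim, not an argument. The paper's mechanism for closing precisely this gap is the cotensoring of $Top_{\mathcal{B}}$ over $Cat_{\infty}$: applying the already-known $(\infty,1)$-equivalence to the cotensors $B^{K}$ for all finite simplicial sets $K$ yields natural equivalences $\mathbf{i}Fun(K, Hom_{TOP_{\mathcal{B}}}(A,B)) \simeq \mathbf{i}Fun(K, Hom_{(TOP^{alg})^{/\mathcal{B}}}(\Gamma A,\Gamma B))$, and taking $K=\Delta^{1}$ detects non-invertible $2$-cells, so \cite[Proposition 2.2.5.7]{LurieHTT} upgrades the core-level statement to an equivalence of mapping $\infty$-categories. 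Some such device is needed in your argument as well; without it the last paragraph does not go through.

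A secondary concern is your contractibility claim for the comparison $2$-cells. The slice of \cref{con2.12} is built from the Gray tensor product, and $(TOP^{alg})^{/\mathcal{B}}$ is a sub-bicategory of $CAT_{\infty}^{/\mathcal{B}}$, so the triangles filling a morphism of the slice are a priori arbitrary natural transformations in $CAT_{\infty}$ between the structure morphisms, not $2$-cells of $TOP_{\mathcal{B}}$. Terminality (really bi-initiality, given the algebraic orientation) of $\Omega_{\mathcal{B}}$ \emph{inside} $TOP_{\mathcal{B}}$ therefore does not immediately give contractibility of the relevant oriented fiber; you would need to know that the space of natural transformations in $CAT_{\infty}$ between the structure morphisms is contractible, which is an additional (if plausible) verification. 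Neither issue is fatal to the strategy, but as written the proof is incomplete at its decisive step.
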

\begin{proof}

By the first paragraphs of \cite[Section 2.3.4]{ Martini-Wolf-2}, $Top_{\mathcal{B}}$ has a cotensoring over $Cat_{\infty}$. Thus, using \cref{lem2.11}, we have a chain of equivalences for two $\mathcal{B}$-topoi $A, B$ and a finite $\infty$-category $K$ 
\begin{multline*}
\mathbf{i}Hom_{(TOP^{alg})^{/ \mathcal{B}}}(\Gamma(A), \Gamma(B^{K}) ) \simeq \mathbf{i}Hom_{TOP_{\mathcal{B}}}(A, B^{K}) \simeq \mathbf{i}Fun(K, \Gamma\underline{Hom}_{TOP_{\mathcal{B}}}(A, B)) \\ \simeq \mathbf{i}Fun(K, \underline{Hom}_{(TOP^{alg})^{/ \mathcal{B}}}(\Gamma(A), \Gamma(B))).
\end{multline*}
 The final equivalence above comes from the fact that $\Gamma$ preserves internal hom in $Top_{\mathcal{B}}$ since it is an equivalence.
We conclude that the functor $\Gamma$ from \cref{thm4.11} preserves the cotensoring. 

That is, there is a commutative diagram 
$$
\xymatrix
{
\mathbf{i}Fun(K, Hom_{TOP_{\mathcal{B}}}(A, B)) \ar[r] \ar[d]_{\simeq} & \mathbf{i}Fun(K, Hom_{(TOP^{alg})^{/ \mathcal{B}}}(\Gamma(A), \Gamma(B)) \ar[d]^{\simeq} \\
Hom_{Top_{\mathcal{B}}}(A, B^{K})) \ar[r]_{\Gamma} &Hom_{(Top^{alg})^{/ \mathcal{B}}}(\Gamma(A), \Gamma(B^{K}))
}
$$
for each finite $\infty$-category $K$, where the vertical maps exhibit the universal property of the cotensor product. The top horizontal map is thus an equivalence. We conclude using
 \cite[Proposition 2.2.5.7]{LurieHTT}, that 
 $$
 Hom_{TOP_{\mathcal{B}}}(A, B))  \rightarrow Hom_{(TOP^{alg})^{/ \mathcal{B}}}(\Gamma(A), \Gamma(B))
 $$
 is an equivalence of $\infty$-categories. The fact that $\Gamma$ is an equivalence follows using the description of equivalences of $\infty$-bicategories from \cite[Remark 1.2.14]{Gagna-Harpaz-Lanari}. 

\end{proof}

\begin{lemma}\label{lem7.4}
There is an equivalence of $\infty$-bicategories 
$$
(TOP^{alg})^{/ \mathcal{B}} \simeq ((TOP^{geom})^{ \mathcal{B}/})^{op}
$$
\end{lemma}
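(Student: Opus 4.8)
The plan is to build the comparison functor by \emph{passing to adjoints} and then to verify it is a biequivalence using the same criterion of \cite{Gagna-Harpaz-Lanari} employed in \Cref{lem7.3}, namely essential surjectivity together with equivalences on all mapping $\infty$-categories. On objects, an object of $(TOP^{alg})^{/\mathcal{B}}$ is a $\mathcal{B}$-topos $\mathcal{E}$ equipped with an algebraic structure map $p \colon \mathcal{E} \to \mathcal{B}$; by the adjoint functor theorem for $\mathcal{B}$-categories (\Cref{rmk4.10}) this $p$ has a right adjoint geometric morphism $p_{*} \colon \mathcal{B} \to \mathcal{E}$, and I would send $(\mathcal{E}, p)$ to the coslice object $(\mathcal{E}, p_{*})$. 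Essential surjectivity is then immediate: every geometric morphism $\mathcal{B} \to \mathcal{E}$ is, up to equivalence, the right adjoint of a unique algebraic morphism $\mathcal{E} \to \mathcal{B}$, again by \Cref{rmk4.10}. This assignment manifestly exchanges the slice over $\mathcal{B}$ with the coslice under $\mathcal{B}$ and reverses the direction of $1$-morphisms, which is the source of the $(-)^{op}$.

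On $1$- and $2$-morphisms the functor is governed by the \emph{mate correspondence}. A $1$-morphism $(\mathcal{E},p) \to (\mathcal{F},q)$ in the $(\infty,2)$-slice of \Cref{con2.12} is an algebraic $h \colon \mathcal{E} \to \mathcal{F}$ together with a triangle filler $qh \Rightarrow p$; taking right adjoints produces $h_{*} \colon \mathcal{F} \to \mathcal{E}$ and, by mateship, a filler $p_{*} \Rightarrow h_{*}q_{*}$, that is, a $1$-morphism $(\mathcal{F},q_{*}) \to (\mathcal{E},p_{*})$ of the coslice, as required for a covariant functor into the $(-)^{op}$. The engine behind all of this is the $(\infty,2)$-categorical adjoint duality on $CAT_{\infty}$: passing to right adjoints is a biequivalence between the locally full sub-$(\infty,2)$-category on presentable categories and left adjoints and the corresponding one on right adjoints, the $(\infty,1)$-shadow of which is Lurie's $Pr^{L} \simeq (Pr^{R})^{op}$. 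I would cite this duality and then \emph{restrict it along the slice/coslice structures}, using that mates send a triangle over $\mathcal{B}$ to a triangle under $\mathcal{B}$, so that the ambient equivalence carries $(TOP^{alg})^{/\mathcal{B}}$ into the coslice of geometric morphisms.

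To check the functor is an equivalence on mapping categories I would proceed as in \Cref{lem7.3}: express both mapping $\infty$-categories via the comma/fiber constructions attached to the inclusions $C^{/x} \to Fun(\Delta^{1}_{\flat}, C) \leftarrow C^{x/}$ of \Cref{con2.12}, and observe that the mate correspondence induces an equivalence between them. It is convenient to first establish the statement on cores, where it reduces to the underlying $(\infty,1)$-equivalence $Top^{alg}_{/\mathcal{B}} \simeq (Top^{geom}_{\mathcal{B}/})^{op}$ coming from \Cref{thm4.11} and \Cref{rmkxxx} together with the $(\infty,1)$ adjoint duality, and then to upgrade using that mateship is fully faithful on $2$-morphisms.

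The hard part will be the bookkeeping of the $2$-morphism directions. The mate correspondence reverses $2$-cells, while the $(\infty,2)$-slice and coslice of \Cref{con2.12} carry \emph{opposite} lax orientations in their triangle fillers; one must check that these two reversals cancel, so that the resulting functor is genuinely covariant on $2$-morphisms and the target is exactly $((TOP^{geom})^{\mathcal{B}/})^{op}$ rather than an additional $(-)^{co}$ twist of it. Verifying this cancellation at the level of the Gray-tensor-product description of the slices, rather than merely on homotopy bicategories, is where the real work lies; everything else is a formal consequence of \Cref{rmk4.10} and the adjoint duality on $CAT_{\infty}$.
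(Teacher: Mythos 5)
Your proposal is correct and follows essentially the same route as the paper: both rest on the $(\infty,2)$-categorical adjoint/mate duality on the arrow category of $CAT_{\infty}$ (realized in the paper via Haugseng's equivalences $Fun(\Delta^{1}_{\flat}, CAT_{\infty})_{ladj} \simeq ADJ(CAT_{\infty}) \simeq (Fun(\Delta^{1}_{\flat}, CAT_{\infty})_{radj})^{op}$), restricted along the slice/coslice inclusions of \cref{con2.12} and checked on triangles over versus under $\mathcal{B}$. Your extra attention to the $(-)^{op}$ versus $(-)^{co}$ bookkeeping is a reasonable refinement of a point the paper treats only implicitly, but it does not change the argument.
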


\begin{proof}
Recall from \cite{Haugseng-2} that there is an $\infty$-bicategory  $ADJ(CAT_{\infty})$ of adjunctions of $\infty$-categories. Write $Fun(\Delta^{1}_{\flat}, CAT_{\infty})_{ladj}, Fun(\Delta^{1}_{\flat}, CAT_{\infty})_{radj}$ for the full subcategories of $Fun(\Delta^{1}, CAT_{\infty})$ spanned by left and right adjoint functors. 
Consider the map
$$
\Phi: TOP_{alg}^{/\mathcal{B}} \rightarrow Fun(\Delta^{1}_{\flat}, CAT_{\infty})_{ladj} \simeq ADJ(CAT_{\infty}) \simeq (Fun(\Delta^{1}_{\flat}, CAT_{\infty})_{radj})^{op}
$$
where the first map is induced by the inclusion of \cref{con2.12} and second (third) map are the equivalences sending an adjunction to its left (right) adjoint given by \cite[Theorem 4.4]{Haugseng-2}.
By construction, the arrows of $(TOP^{alg})^{/ \mathcal{B}}$ can be identified with squares 
$$
\xymatrix
{
\mathcal{E} \ar[r]_{h_{!}  g_{!}} \ar[d]_{g_{!}}& \mathcal{B} \ar[d]_{id} \\
\mathcal{F} \ar[r]_{h_{!}} & \mathcal{B}
}
$$
where each $ g^{!}, h^{!}$ are algebraic morphisms. By construction the map $\Phi$ takes this square to a diagram of the form: 
$$
\xymatrix
{
\mathcal{E} & \mathcal{B} \ar[l]_{g^{!} h^{!}  } \\
\mathcal{F}  \ar[u]_{g^{!}}  & \ar[l]_{h^{!}} \mathcal{B}  \ar[u]_{id}
}
$$
where each $g^{!}, h^{!}$ are the right adjoints of $g_{!}, h_{!}$.

Thus, the map $\Phi$ corestricts to an equivalence of the required kind. 
\end{proof}

\begin{theorem}\label{thm7.5}
There is a functor $U: \text{TOP}_{\mathcal{B}} \to PR$ which maps a topos to its underlying category and a geometric morphism to its right adjoint. This functor is represented by an object we denote $\mathcal{B}[\mathbb{O}]$.  
\end{theorem}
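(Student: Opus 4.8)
The plan is to deduce \Cref{thm7.5} from the corepresentability established in \Cref{lem7.2} by transporting it across the two equivalences of \Cref{lem7.3} and \Cref{lem7.4}. The key observation is that \Cref{lem7.2} already produces the object classifier, which I would take to be $\mathcal{B}[\mathbb{O}] := Psh_{\mathcal{B}}(Psh^{Fin_{\mathcal{B}}}_{\mathcal{B}}(*))$, together with a natural equivalence exhibiting the underlying-category functor; however, there its functoriality is recorded through the \emph{algebraic} (left adjoint, inverse-image) morphisms and valued in $CAT_{\infty}$, whereas \Cref{thm7.5} asks for the same assignment on objects but with functoriality expressed through the \emph{geometric} (right adjoint, direct-image) morphisms of \Cref{rmk4.10} and valued in $PR$. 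Passing between these two descriptions is exactly what \Cref{lem7.3} and \Cref{lem7.4} are designed to do.

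Concretely, I would first compose $\Gamma : TOP_{\mathcal{B}} \simeq (TOP^{alg})^{/ \mathcal{B}}$ from \Cref{lem7.3} with the equivalence $(TOP^{alg})^{/ \mathcal{B}} \simeq ((TOP^{geom})^{\mathcal{B}/})^{op}$ from \Cref{lem7.4} to obtain a single equivalence $TOP_{\mathcal{B}} \simeq ((TOP^{geom})^{\mathcal{B}/})^{op}$ of $\infty$-bicategories. The essential feature, which I would extract from the proof of \Cref{lem7.4}, is that this equivalence is the identity on objects and, being built from the equivalence $ADJ(CAT_{\infty}) \simeq (Fun(\Delta^{1}_{\flat}, CAT_{\infty})_{radj})^{op}$, carries a $1$-cell given by an algebraic morphism to the opposite of the corresponding geometric morphism, i.e. it performs the passage from a left adjoint to its right adjoint. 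I would then define $U$ by composing this equivalence with the forgetful functor $(TOP^{geom})^{\mathcal{B}/} \to PR$ that remembers the underlying $\infty$-category and sends a geometric morphism to its underlying direct-image functor. This lands in $PR$ because, by \Cref{def4.9}, the underlying $\infty$-category of a $\mathcal{B}$-topos is presentable, and a direct-image functor is a right adjoint, hence continuous and accessible; so $U$ is well defined with exactly the stated object- and morphism-level behaviour, consistent with the convention of \Cref{rmk4.13}.

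For the representability, I would use that an equivalence of $\infty$-bicategories carries a corepresentable functor to a (co)representable functor, transporting the (co)representing object, and that precomposing with the $op$ in $((TOP^{geom})^{\mathcal{B}/})^{op}$ interchanges corepresentability and representability. Applying this to the corepresentability $Hom_{TOP_{\mathcal{B}}}(\mathcal{B}[\mathbb{O}], -) \simeq (-)$ of \Cref{lem7.2} then yields a natural equivalence $U \simeq Hom_{TOP_{\mathcal{B}}}(-, \mathcal{B}[\mathbb{O}])$, exhibiting $U$ as represented by the transported object classifier. I expect the main obstacle to be the $2$-categorical bookkeeping rather than any single hard construction: one must verify that the equivalences of \Cref{lem7.3} and \Cref{lem7.4} match not merely the maximal sub-$\infty$-groupoids but the \emph{full} mapping $\infty$-categories, and that under them the action by left adjoints genuinely becomes the action by right adjoints. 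This is precisely the style of verification already performed in the proof of \Cref{lem7.3} — first matching the cotensors with $\Delta^{1}$ (available since $Top_{\mathcal{B}}$ is cotensored over $Cat_{\infty}$) and then invoking the criterion for equivalences of $\infty$-bicategories from \cite{Gagna-Harpaz-Lanari} — and I would reuse that template to upgrade the equivalence on cores to an equivalence of the ambient $\infty$-bicategories, which is what makes the identification of $U$ with the representable functor complete.
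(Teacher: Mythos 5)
Your proposal is correct and follows essentially the same route as the paper: it deduces the result from the corepresentability in \Cref{lem7.2} by transporting along the equivalences of \Cref{lem7.3} and \Cref{lem7.4}, with the $op$ converting corepresentability into representability and the adjunction equivalence of \cite{Haugseng-2} (restricted to $PR^{ladj}\simeq PR^{radj}$) turning the inverse-image functoriality into direct-image functoriality valued in $PR$. The only difference is presentational — the paper factors the forgetful functor explicitly through $PR^{ladj}$ and checks commutativity of the resulting square, whereas you compose the two equivalences first — but the ingredients and logic are identical.
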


Consider the commutative diagram
$$
\xymatrix
{
TOP_{\mathcal{B}} \ar[r] \ar[d]_{\Gamma \simeq} & CAT(\mathcal{B}) \ar[d]^{\Gamma} \ar[r] & CAT(\mathcal{S}) \ar[d]^{ id} \\
(TOP^{alg})^{/ \mathcal{B}} \ar[r] & (CAT_{\infty})_{/\mathcal{B}}
 \ar[r] & CAT_{\infty} }
$$
where $\Gamma$ is the global sections functor and the horizontal composites are forgetful functors. The left vertical map is an equivalence by \cref{lem7.3}. Thus, the bottom horizontal map is corepresentable.

We note that the equivalences given in \cite[Theorem 4.4]{Haugseng-2} (co)restrict to yield an equivalence 
$$
\Psi : PR_{\infty}^{ladj} \simeq PR_{\infty}^{radj}
$$
between the subcategories of $PR$ spanned by left (right adjoints). The forgetful functor $F: (TOP^{alg})^{/ \mathcal{B}}\rightarrow CAT_{\infty}$ factors as
$$
(TOP^{alg})^{/ \mathcal{B}} \xrightarrow{F'} PR^{ladj} \xrightarrow{i_{1}} CAT_{\infty}
$$
where $i_{1}$ is the inclusion and $F'$ is a forgetful functor. Thus by the proof of \cref{lem7.2}, we conclude that $F'$ is corepresentable. There is a  commutative diagram 
$$
\xymatrix
{
(TOP^{alg})^{/ \mathcal{B}} \ar[r]_{F'}  \ar[d]_{\Phi} & \ar[d] PR^{ladj} \ar[d]^{\Psi} \\
((TOP^{geom})^{ \mathcal{B}/})^{op} \ar[r]_{(G')^{op}} & (PR^{radj})^{op}
}
$$
where $G'$ is a forgetful functor and $\Phi$ is the equivalence of \cref{lem7.4}. This diagram and the fact that $F'$ is corepresentable implies that the functor $(G')^{op}: (TOP^{geom})^{ \mathcal{B}/} \rightarrow PR$ is as well.

\section{Classifying topoi}

\label{classifying}
This section is devoted to the construction of various classifying $\infty$-topoi. As we have already discussed, this paper does not really take a logical stance on the topic, and will not have notion of geometric theory supported by a proof calculus. Insteat, we take a much more \textit{hands-on} definition of theory, which for the working mathematician could be understood as \textit{a construction}.

\begin{definition}
    A \textit{theory} is a functor $\mathbb{T}: TOP_{\mathcal{B}}^{op} \to CAT_{\infty}$. We say that a theory admits a classifying topos, or is classified by a topos if the functor is representable, i.e. there exists a topos $\mathcal{B}[\mathbb{T}]$ and an equivalence \[\mathbb{T}(-) \simeq  TOP_{\mathcal{B}}(-,\mathcal{B}[\mathbb{T}])\]
\end{definition}

The definition above is intentionally too general and has a very simple paradigm in mind: we identify a theory with its \textit{semantic trace}.

This notion of \textit{theory} and classifying topos is free-styled in \cite[Section 5]{anel2022left}, where the authors present some of our examples below. Our discussion will upgrade their understanding by relating it to the notion of \textit{geometric sketch} in the next subsection. Our treatment is indeed much less hand-wavy in assessing its expressive power and identifies clearly the range of expression of classifying topoi.

To explain our intuition, and present the first interesting example of theory and classifying topos, let us restrict to the special case in which $\mathcal{B}$ is the topos of spaces $\mathcal{S}$. Through the whole section, we will make this restriction several times as this is simultaneously the most interesting case, and the case in which it is easier and most paradigmatic to present the examples.

\begin{example}[Lawvere theories]
Let $\mathsf{L}$ be a Lawvere theory in the sense of \cite{berman2020higher}, then we have a canonically associated theory \[\mathbb{T}^\mathsf{L}:  TOP^{op}_{\mathcal{S}} \to CAT_{\infty}\] mapping a topos $\mathcal{E}$ to its $\infty$-category of $\mathsf{L}$-models in $\mathcal{E}$. Let $\widehat{\mathsf{L}}$ be the free completion of $\mathsf{L}$ under finite limits preserving the existing products \footnote{This can be identified with $(Psh_{\mathcal{S}}^{Fin_{\mathcal{S}}}(\mathsf{L}^{op}))^{op} $, where $Psh_{\mathcal{S}}^{Fin_{\mathcal{S}}}(-)$ is the free cocompletion with respect to finite colimits whose existence follows from \cite[Proposition 2.3.4.6 and 2.2.3.7]{Martini-Wolf-2}}, then it follows from Diaconescu theorem that the topos $Psh_{\mathcal{S}}(\widehat{\mathsf{L}})$ classifies the theory $\mathbb{T}^\mathsf{L}$ (see the argument of \cref{lem7.3}). 
\end{example}

\begin{remark}
As we have previously hinted, the prestack of models of a Lawvere theory $\mathbb{T}^\mathsf{L}$ can be understood as a \textit{construction} that we can perform on all topoi, and the existence of a classifier offers a compact object that packages that construction, allowing to study only the topos to infer properties of the whole construction.
\end{remark}

\begin{remark}
For example, one could be interested in understanding the \textit{endomorphisms} existing over the prestack of a Lawvere theory, i.e. the natural transformations of the form $\mathbb{T}^\mathsf{L} \to \mathbb{T}^\mathsf{L}$. Then it follows by Yoneda that those a classified by the endomorphisms of its classifying topos. In particular, they form an accessible $\infty$-category  with directed colimits (which was everything but trivial in first place!).
\end{remark}

\begin{remark}
    The theory of classifying topoi provides us with a new perspective on a topos, imported from usual $1$-dimensional topos theory. When a topos is \textit{on the right} of a geometric morphism, it is a \textit{placeholder} for a theory (in the case above, a Lawvere theory). When it is on the \textit{left} of a geometric morphism, it is a mathematical universe in which we are taking models. The geometric morphism itself \textit{is} the model (\textit{of} the theory on the right \textit{in} the universe on the left).
\end{remark}

Before moving to somewhat sophisticated and logical examples, we shall see other examples of theories and comment on them.

\begin{example}[The theory of objects]
        Among the theories we find the -- already discussed in the previous sections -- theory of objects, being the theory mapping an $\infty$-topos $\mathcal{E}$ to its underlying $\infty$-category, $\mathcal{E} \mapsto \mathcal{E}$, and mapping a geometric morphism to its inverse image, \[\mathbb{O}: TOP_{\mathcal{B}}^{op} \to CAT_{\infty}\]
    Now, it goes without saying that such theory is classified by the topos $\mathcal{B}[\mathbb{O}]$, as we have shown in the previous section. 
\end{example}

\begin{example}[$n$-truncated objects]
Similarly to the previous example, we can consider the mapping $\mathcal{E} \mapsto \mathcal{E}_{\le n}$, where $\mathcal{E}_{\le n}$ is the full subcategory of $\mathcal{E}$ consisting of $n$-truncated objects (see \cite[Section 6.5.1]{LurieHTT}). It is well known that inverse images preserve $n$-truncated objects as the truncation functors are left adjoints (\cite[Proposition 5.5.6.16]{LurieHTT})) and they commute with inverse images (\cite[Proposition 5.5.6.28]{LurieHTT})). It follows that we can define a theory \[\mathbb{O}_{\le n}: TOP_{\mathcal{S}}^{op} \to CAT_{\infty}.\]

This theory has a classifying topos $ \mathcal{S}[\mathbb{O}]_{\le n}$, which is moreover, an $n$-topos (see \cite[Theorem 6.4.1.5]{LurieHTT}).

\end{example}

\begin{example}[The theory of pointed objects]

For each pointed $\infty$-category $C$ (i.e. an $\infty$-category with a $0$ object), we will write $C_{\bullet}$ for the full subcategory of $C^{\Delta^{1}}$ consisting of objects $0 \rightarrow c$. Since geometric morphisms preserve zero objects, there is a functor $\mathcal{E} \rightarrow \mathcal{E}_{\bullet}$

\[\mathbb{O}_\bullet: TOP_{\mathcal{S}}^{op} \to CAT_{\infty}.\]

\noindent
By Diaconescu's theorem, this functor is represented by the $\infty$-topos $Psh_{\mathcal{S}}((Fin_{\mathcal{S}_{*}})^{op})$, where $Fin_{\mathcal{S}_{*}}$ is the $\infty$-category of finite pointed spaces (c.f. \cite[Notation 1.4.2.5]{lurie_higher_algebra}).

\end{example}

\begin{example}[The theory of spectra] \label{spectra}
Following \cite[Section 1.4]{lurie_higher_algebra}, for $\mathcal{K}$ a locally presentable infinity category we can construct $\mathsf{Sp}(\mathcal{K}_\bullet)$ the (locally presentable) category of spectra in $\mathcal{K}_\bullet$. By definition (\cite[Definition 1.4.2.8]{lurie_higher_algebra}), the spectrum objects can be identified with excisive functors $Fin_{\mathcal{S}_{*}} \rightarrow \mathcal{E}$, where $Fin_{\mathcal{S}_{*}}$ is the $\infty$-category of finite pointed spaces, and an excisive functor is a functor that takes pushouts to pullbacks.

Now, of course every $\infty$-topos is locally presentable, and thus we have the mapping $\mathcal{E} \mapsto \mathsf{Sp}(\mathcal{E}_\bullet)$.
For $f: \mathcal{E} \to \mathcal{F}$ a geometric morphism, because the inverse image preserves both finite limits and zero objects and thus the construction of spectra, providing us with a functor $f^\bullet: \mathsf{Sp}(\mathcal{F}_\bullet) \to \mathsf{Sp}(\mathcal{E}_\bullet)$. 
This association gives us the theory, \[\mathbb{S}: TOP_{\mathcal{S}}^{op} \to CAT_{\infty}.\]

The spectrum objects $\mathsf{Sp}(\mathcal{E}_\bullet)$ in the sense above are a presentation of the stabilization of the $\infty$-topos $\mathcal{E}$ in the sense of \cite{Harpaz-Nuiten-Prasma}. Thus, the spectrum functor above can be identified with other models of the stabilization of an $\infty$-category (see \cite[Remark 3.24]{Harpaz-Nuiten-Prasma}). Of particular importance is the classical case of symmetric $\Omega$-spectrum objects, which presents the stabilization by \cite[Proposition 4.15]{Robalo}. 

The symmetric $\Omega$-spectrum objects can be described as follows. Consider the subcategory $Sph \subseteq Fin_{\mathcal{S}_{*}}$ on the objects $S^{n}, n \ge 0$, such that
$$
Hom_{Sph}(S^{n}, S^{n+k}) = S^{k} \subseteq Hom_{Fin_{\mathcal{S}_{*}}}(S^{n}, S^{n+k})
$$
embedded via the adjoint to the natural homeomorphism $S^{n} \bigwedge S^{k} \simeq S^{n+k}$ and all other maps the constant map. We define $Sph^{\Sigma} \subseteq Fin_{\mathcal{S}_{*}}$ to be the subcategory obtained by adjoining to $Sph$ the morphisms generated by the action of the symmetric group $\Sigma_{n}$ on $S^{1} \wedge \cdots \wedge S^{1} \simeq S^{n} $. The $\Omega$-spectrum objects in $\mathcal{E}$ can be identified with objects of $Fun_{*}(Sph^{\Sigma}, \mathcal{E})$, for a pointed $\infty$-category $\mathcal{E}$. Thus, by Diaconescu's theorem, the classifying $\infty$-topos can be identified with $Psh_{\mathcal{S}}((Sph^{\Sigma})^{op})$.  

\end{example}



\subsection{Geometric sketches} \label{subsecsketches}

We shall start with a short digression on the theory of \textit{sketches}. The community of category theory and categorical logic introduced sketches as compact presentations of theories. Their intention is to generalize the notion of Lawvere theory to a much broader context, taking the blueprint of \textit{functorial semantics} to its limit. There are many classical introductions to the notion of sketch in $1$-dimensional category theory, and the interested reader who wants to dive into the topic from a perspective closer to that of this paper can check \cite[2.F]{adamekrosicky94} and \cite[Vol II, 5.6]{borceux_1994}.

Sketches are an extremely broad framework that encompasses almost every possible (predicate-style) logical system  \cite[Chap. 5]{adamekrosicky94}. As an evidence of the latter statement, and more tightly related to the topic of this paper, classically it is shown that sketches can be used as an alternative -- purely categorical -- framework to encode geometric logic, via the notion of \textit{geometric sketch} (see \cite[Vol III, 4.3]{borceux_1994}.

In \cite{casacuberta2025sketchable}, the authors present a generalization of the classical theory of sketches to the context of $\infty$-categories. We refer to the paper for a \textit{crash course} on the topic, especially for a collection of examples. Notice that \cite{casacuberta2025sketchable} proves that every example that we have presented in the previous part of the section is indeed given by the category of models of a sketch, and thus this subsection subsumes the previous discussion.

   \begin{definition}
We say that a sketch $\Sigma$ in the sense of  \cite[Def 3.1]{casacuberta2025sketchable} is \textit{geometric} if its limit part consists of finite diagrams. No restrictions on its colimit part.    
   \end{definition} 

   \begin{example}[The prestack associated to a geometric sketch]
   Now, let  $\Sigma$ be a geometric sketch, then we have an associated prestack

   \[\textbf{Mod}(\Sigma,-): TOP_{\mathcal{S}}^{op} \to CAT_{\infty},\]
   mapping a topos $\mathcal{E}$ to the accessible category of models of $\Sigma$ in $\mathcal{E}$ in the sense of \cite[page 9]{casacuberta2025sketchable}. The construction is functorial because the sketch is geometric, indeed inverse images will preserve the colimit part because they are cocontinuous and the finite (!) limit diagram because they are lex.
   \end{example}

   \begin{remark}
   In the aforementioned paper the authors show that a vast number of constructions are sketchable, and it is easy to see that most of them (if not all) are given by geometric sketches.  Notice that geometric sketches generalize Lawvere theories on the spot, and in the classic case of $1$-dimensional topos theory this technology is equivalent to geometric logic.
 \end{remark}

 \begin{theorem}  \label{smallgeom}
   If a theory has a classifying topos, then it is the prestack of models of a (large) geometric sketch.
\end{theorem}
\begin{proof}
Let $\mathbb{T}$ be the theory and $\mathcal{B}[\mathbb{T}]$ be its classifying topos. We shall study only the case $\mathcal{S}[\mathbb{T}]$ for simplicity. Now, we equip $\mathcal{S}[\mathbb{T}]$ with a sketch structure, which we shall denote by $\Sigma[\mathbb{T}]$. In the colimit part we put all colimit cocone diagrams, and in the limit part we put all finite limit cone diagrams. It goes without saying that this is a geometric sketch. Now, it is clear that $\textbf{Mod}(\Sigma[\mathbb{T}],-) \simeq \textbf{Cocontlex}(\mathcal{S}[\mathbb{T}],-)$, where the latter is the category of cocontinuous and finite limit preserving functors. Observe that by the Adjoint Functor Theorem we have the following chain of equivalences, which finishes the proof.

\[\textbf{Mod}(\Sigma[\mathbb{T}],-) \simeq \textbf{Cocontlex}(\mathcal{S}[\mathbb{T}],-) \stackrel{\text{AFT}}{\simeq} TOP_{\mathcal{S}}(-,\mathcal{S}[\mathbb{T}]) \simeq \mathbb{T}(-).\]
\end{proof}

 \begin{theorem} \label{classtoposforgeoske}
   If a theory has a classifying topos, then it is the prestack of models of a small geometric sketch.
\end{theorem}
\begin{proof}
Let $\mathbb{T}$ be the theory, $\mathcal{B}[\mathbb{\mathbb{T}}]$ be its classifying topos and $\Sigma[\mathbb{T}]$ the large geometric sketch from the previous theorem. We shall provide a small geometric sketch $\Lambda[\mathbb{T}]$ and a morphism of sketches $i: \Lambda[\mathbb{T}] \to \Sigma[\mathbb{T}]$ that is a \textit{Morita equivalence}, in the sense that the induced morphism below induces a natural equivalence of prestacks. This will finish the proof.

\[\textbf{Mod}(\Sigma[\mathbb{T}],-) \simeq \textbf{Mod}(\Lambda[\mathbb{T}],-).\]

To do so, consider any site $(C,\Xi)$ of definition (whose underlying category is lex) for $\mathcal{B}[\mathbb{T}]$ in the sense of \cite[4.3.8]{anel2022left}. It follows from \cite[4.3.2 and the discussion right below]{anel2022left} that we can assume $\Xi$ to consist of maps $\delta: P \to yc$ where $P$ is a presheaf, and $yc$ is a representable. Now, we can turn $\delta$ into a cocone between the diagram induced by the category of elements $\pi_P: \text{Elts}(P) \to Psh(C)$ and the constant diagram $const_c: \text{Elts}(P) \to Psh(C)$. These will be our colimit cocones in $\Lambda[\mathbb{T}]$, while its limit cones will be all finite limiting cones. It is easy to see that the inclusion $i: \Lambda[\mathbb{T}] \to \Sigma[\mathbb{T}]$ is a morphism of sketches with these definitions, and that it has the required properties, again by (the proof of) \cite[4.3.10]{anel2022left}.
\end{proof}

\begin{remark}
    The two proofs above are largely influenced by \cite[Sections 4.1 and 4.2]{di2024sketches}. Here, we adapted the arguments in that paper to the technology and language of \cite{anel2022left}.
\end{remark}

\subsubsection{From geometric sketches to classifying topoi}
At this point we have learned that if a theory has a classifying topos then it was actually the prestack of models of a geometric sketch. In the next part of the subsection we will prove the converse implication, i.e. every geometric sketch admits a classifying topos.

The reader may have noticed that the previous examples have never mentioned the main result of the paper (\Cref{thm6.1}). We shall now prove that every geometric sketch has a classifying topos.

\begin{remark}
    In the discussion below, by \textit{inverter} (and later \textit{inserter} we inted the $\infty$-analogs of the weighted colimits discussed in \cite[Sec 4]{Kelly-Limits}. 
\end{remark}

\begin{theorem} \label{classtoposforgeoske}
    Let $\textbf{Mod}(\Sigma,-): TOP_{\mathcal{S}}^{op} \to CAT_{\infty}$ be the prestack associated to a geometric sketch, then such prestack has a classifying topos.
\end{theorem}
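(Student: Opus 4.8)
The plan is to exhibit $\textbf{Mod}(\Sigma,-)$ as a weighted limit of representable prestacks and then quote \Cref{thm6.1}. The engine is the usual representability principle: the assignment $X \mapsto TOP_{\mathcal{S}}(-,X)$ carries weighted limits in $TOP_{\mathcal{S}}$ to weighted limits of prestacks, so any theory that can be written as a finite weighted limit of representables, along a diagram whose weighted limit exists in $TOP_{\mathcal{S}}$, automatically has a classifying topos. Thus the entire problem reduces to rewriting the two defining clauses of a $\Sigma$-model --- preservation of the limit cones $\mathcal{L}$ and of the colimit cocones $\mathcal{K}$ --- as such a weighted limit.

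First I would dispose of the diagram and the finite limit cones exactly as in the Lawvere example. Writing $\Sigma=(\mathcal{C},\mathcal{L},\mathcal{K})$ and forming the free finite-limit completion $\mathcal{C}[\mathcal{L}]$ in which the (finite!) cones of $\mathcal{L}$ become genuine limits, a functor $\mathcal{C}\to\mathcal{E}$ preserving $\mathcal{L}$ is the same as a left exact, hence flat, functor $\mathcal{C}[\mathcal{L}]\to\mathcal{E}$. By Diaconescu's theorem (\cite[Proposition 3.8.1.2]{Martini-Wolf-2}), as invoked for $\mathbb{T}^{\mathsf{L}}$ and $\mathbb{S}$, the presheaf topos $\mathcal{P}:=Psh_{\mathcal{S}}(\mathcal{C}[\mathcal{L}]^{op})$ represents the auxiliary prestack $\mathcal{E}\mapsto\{\,M\colon\mathcal{C}\to\mathcal{E}\text{ preserving }\mathcal{L}\,\}$. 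This step is insensitive to the size of $\mathcal{C}$ and $\mathcal{L}$ and needs no weighted limits; it also supplies a \emph{generic model} $M_{\mathrm{gen}}\colon\mathcal{C}\to\mathcal{P}$.

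It is the colimit cocones where \Cref{thm6.1} does the work, through the inverter. Fix a cocone $\mu_{k}\in\mathcal{K}$ with base $D_{k}\colon I_{k}\to\mathcal{C}$ and coapex $c_{k}$. Because inverse images are cocontinuous, the two objects $\mathrm{colim}_{i}M_{\mathrm{gen}}D_{k}(i)$ and $M_{\mathrm{gen}}(c_{k})$ of $\mathcal{P}$ name two algebraic morphisms $\mathcal{P}\rightrightarrows\mathcal{S}[\mathbb{O}]$ into the object classifier of \Cref{thm7.5}, and the canonical comparison $\gamma_{k}\colon\mathrm{colim}_{i}M_{\mathrm{gen}}D_{k}(i)\to M_{\mathrm{gen}}(c_{k})$, being a morphism of $\mathcal{P}$, is precisely a $2$-cell between them. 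Forming the inverter $\mathrm{Inv}(\gamma_{k})\to\mathcal{P}$ yields, by its universal property, the universal topos over $\mathcal{P}$ on which $\gamma_{k}$ becomes invertible; whiskering with a map $f\colon\mathcal{E}\to\mathcal{P}$ turns $\gamma_{k}$ into the comparison morphism in $\mathcal{E}$ of the model $M$ classified by $f$, so that $f$ factors through $\mathrm{Inv}(\gamma_{k})$ exactly when $M$ sends $\mu_{k}$ to a colimit. A single inverter is a finite weighted limit, hence exists by \Cref{cor3.7} and \Cref{thm6.1}.

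Finally I would assemble all the conditions at once: the classifying topos is the joint inverter, i.e. the conical limit in $TOP_{\mathcal{S}}$ of the family $\{\mathrm{Inv}(\gamma_{k})\to\mathcal{P}\}_{k}$. When $\mathcal{K}$ is finite this is again a finite weighted limit covered by \Cref{thm6.1}; in general the conical limit exists because the underlying $\infty$-category of topoi is complete and $TOP_{\mathcal{S}}$ carries the arrow power, so \Cref{thm3.6} produces all the requisite weighted limits. Unwinding the universal properties, the points of this topos in any $\mathcal{E}$ are exactly the functors $\mathcal{C}\to\mathcal{E}$ preserving every cone of $\mathcal{L}$ and every cocone of $\mathcal{K}$, i.e. $\textbf{Mod}(\Sigma,\mathcal{E})$, giving representability. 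The hard part will be the middle step carried out \emph{inside} the $(\infty,2)$-category: honestly producing the comparison $2$-cells $\gamma_{k}$ --- using $\mathcal{S}[\mathbb{O}]$ to name the generic (co)limit objects, and, where a map must be freely adjoined before being inverted, an inserter --- and then checking that the strictly $2$-categorical universal property of the inverter restricts, via \Cref{rmk3.2} on underlying $\infty$-categories, to the naive clause ``$M$ preserves $\mu_{k}$''. Throughout, the chief bookkeeping hazard is the algebraic-versus-geometric variance, since $TOP_{\mathcal{S}}$ is the opposite of the $2$-category of topoi.
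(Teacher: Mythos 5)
Your proposal is correct in outline and rests on the same two pillars as the paper's proof --- Diaconescu's theorem to classify the cone-free part, and inverters (finite weighted limits supplied by \Cref{thm6.1}) plus Yoneda to impose the (co)cone axioms --- but your implementation of the inverter step is genuinely different. The paper replaces a single cone $\rho\colon D \Rightarrow \mathsf{const}_x$ on $I \rightrightarrows \mathcal{A}$ by the pair of restriction functors $\textbf{Mod}(\Sigma^{\mathcal{A}},-) \rightrightarrows \textbf{Mod}(\Sigma^{I},-)$ and takes the inverter of the $2$-cell obtained by whiskering with $\rho$, entirely at the level of prestacks; representability then follows because both legs are classified by presheaf topoi. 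You instead work inside $TOP_{\mathcal{S}}$ itself: you build the presheaf topos $\mathcal{P}$ on the forced finite-limit completion, use the object classifier $\mathcal{S}[\mathbb{O}]$ of \Cref{thm7.5} to convert the colimit-comparison morphism of the generic model into a $2$-cell between two morphisms $\mathcal{P} \rightrightarrows \mathcal{S}[\mathbb{O}]$, and invert that. Your version buys precision exactly where the paper is loosest: inverting $\rho$ itself, as the paper literally does, forces every leg $MD(i) \to M(x)$ to be an equivalence, which is stronger than asking the cocone to be colimiting, whereas inverting the comparison map $\mathrm{colim}_i M_{\mathrm{gen}}D(i) \to M_{\mathrm{gen}}(c)$ is the correct condition. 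The price is reliance on two facts the paper does not spell out: (i) the existence of the free finite-limit completion of $\mathcal{C}$ in which the designated cones of $\mathcal{L}$ become genuine limits, together with the identification of lex functors out of it with $\mathcal{L}$-preserving functors out of $\mathcal{C}$ (the paper's Lawvere example only does this for finite products; alternatively the limit cones could be handled by the same comparison-map inverters, avoiding the forced completion); and (ii) the compatibility of the generic comparison cell with whiskering, i.e.\ that $f^{*}\gamma_k$ is the comparison map of the classified model, which is where cocontinuity of inverse images enters and which you correctly flag. Neither is a gap in the strategy, and your variance bookkeeping (classifying topoi arise as weighted colimits on the algebraic side) matches \Cref{thm6.1} as stated.
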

\begin{proof}
Let $\Sigma = (\mathcal{A}, \mathbb{L}, \mathbb{C})$ be a geometric sketch. We divide the proof in two steps.
\begin{itemize}
    \item[(Step 1)] When the sketch $\Sigma$ has no limit cones, nor colimit cocones, then it is easy to see that we have a classifier. This follows directly from Diaconescu, taking the free category with finite limits over $\mathcal{A}$.
    \item[(Step 2)] Now assume for simplicity that $\Sigma$ has precisely one cone. In practice such structure can be understood as a couple of diagrams $D, \mathsf{const}_x:I \rightrightarrows \mathcal{A}$, where one of the diagrams is constant on a certain object $c$, together with a natural transformation $\rho: D \Rightarrow \mathsf{const}_x$ between them. Now, we can identify both $\mathcal{A}$ and $I$ with their associated sketch of $A$ and $I$-diagrams, obtaining two sketches $\Sigma^\mathcal{A}$ and $\Sigma^I$. Then, we obtain the following diagram,

\[\begin{tikzcd}[ampersand replacement=\&]
	{\textbf{Mod}(\Sigma,-)} \& {\textbf{Mod}(\Sigma^{\mathcal{A}},-)} \& {\textbf{Mod}(\Sigma^I,-)}
	\arrow[from=1-1, to=1-2]
	\arrow[""{name=0, anchor=center, inner sep=0}, shift right=5, from=1-2, to=1-3]
	\arrow[""{name=1, anchor=center, inner sep=0}, shift left=5, from=1-2, to=1-3]
	\arrow[ Rightarrow, from=1, to=0]
\end{tikzcd}\]

where $\textbf{Mod}(\Sigma,-)$ is pointwise (and thus globally) the inverter on the natural transformation. It follows by Yoneda that because both $\Sigma^\mathcal{A}$ and $\Sigma^I$ have a classifying topos, so does $\Sigma$. If there a set of cones, or cocones the argument is essentially the same.
\end{itemize}
\end{proof}

In the theory of sketches above we should understand the data of a cone (or a cocone) to be a kind of \textit{axiom} that we are enforcing on our theory. This is even more evident when we compute inverters, as we are taking those models which \textit{believe} a certain map to be an isomorphism.

To finish off our discussion we recall definitions taken  by \cite[B4.2]{Johnstone} which rest on this precise intuition.

\begin{definition}
    Let $\mathbb{T}$ be a theory.
    \begin{itemize}
        \item a \textit{geometric grounding} $(U,X)$ for a theory $\mathbb{T}$ is a couple where $X$ is a set and $U$ is natural transformation $U: \mathbb{T} \to \mathbb{O}^X$. $X$ is the \textit{arity} or \textit{sorting} of the grounding. The couple $(\mathbb{T},U)$ will be referred as \textit{geometric construct}, and we will always omit the specification of the sorting.
        \item Let $(\mathbb{T}, U_{\mathbb{T}}), (\mathbb{S}, U_{\mathbb{S}})$ geometric constructs (over the same sorting), a morphism of geometric constructs is a natural transformation as below:

\[\begin{tikzcd}[ampersand replacement=\&]
	{\mathbb{T}} \&\& {\mathbb{S}} \\
	\& {\mathbb{O}^X}
	\arrow["f", from=1-1, to=1-3]
	\arrow["{U_{\mathbb{T}}}"', from=1-1, to=2-2]
	\arrow["{U_\mathbb{S}}", from=1-3, to=2-2]
\end{tikzcd}\]
One can imagine quite easily more flexible notions of morphisms of geometric constructs over different sortings, but we choose to keep the presentation simpler.
\item Let $U,V: \mathbb{T} \to \mathbb{O}^X$ be two geometric groundings of the same theory. We define $\mathbb{U}\Rightarrow\mathbb{V}$ to be the $\textit{inserter}$, in the diagram below and we shall refer to it as a \textit{functional extension} of $\mathbb{T}$.

\[\begin{tikzcd}[ampersand replacement=\&]
	{\mathbb{U} \Rightarrow \mathbb{V}} \& {\mathbb{T}} \& {\mathbb{O}^X}
	\arrow["j", dashed, from=1-1, to=1-2]
	\arrow["U"', shift right=2, from=1-2, to=1-3]
	\arrow["V", shift left=2, from=1-2, to=1-3]
\end{tikzcd}\]

Recall that by definition an $\textit{inserter}$ is another theory $\mathbb{U}\Rightarrow\mathbb{V}$ equipped with a natural transformation $j: \mathbb{U} \Rightarrow \mathbb{V} \to \mathbb{T}$ and a natural transformation $\lambda: Vj \Rightarrow Uj$ with the expected universal property.
\item  Let $\mu: U \rightarrow V$ be a natural transformation between $U,V: \mathbb{T} \to \mathbb{O}^X$ be two geometric groundings of the same theory, we define $\mathbb{T}[\mu^{-1}]$ to be the \textit{inverter} of the diagram below,

\[\begin{tikzcd}[ampersand replacement=\&]
	{\mathbb{T}[\mu^{-1}]} \& {\mathbb{T}} \&\& {\mathbb{O}^X}
	\arrow["j", dashed, from=1-1, to=1-2]
	\arrow[""{name=0, anchor=center, inner sep=0}, "U"', shift right=4, from=1-2, to=1-4]
	\arrow[""{name=1, anchor=center, inner sep=0}, "V", shift left=5, from=1-2, to=1-4]
	\arrow["\mu", Rightarrow, from=1, to=0]
\end{tikzcd}\]
    \end{itemize}
\end{definition}

We have already seen for the case of inverters how one can use these limit constructions to enforce behaviors over categories of models of a theory, and thus construct new classifying topoi.

\bibliographystyle{alpha}
\bibliography{classifyingBib}

\end{document}